\newtheorem{theorem}{Theorem}[section]
\newtheorem{lemma}{Lemma}[section]
\newtheorem{corollary}{Corollary}[section]
\theoremstyle{definition}
\newtheorem{remark}{Remark}[section]
\newtheorem{definition}{Definition}
\numberwithin{equation}{section}
\newcommand{\re}{ \mathbf{R}}%
\newcommand{\beq}{\begin{equation}}
\newcommand{\bea}[1]{\begin{array}{#1} }
\newcommand{\eeq}{ \end{equation}}
\newcommand{\ea}{ \end{array}}
\newcommand{\al}{\alpha}
\newcommand{\ga}{\gamma}
\newcommand{\de}{\delta}
\newcommand{\ds}{\displaystyle}
\newcommand{\rar}{\mbox{$\rightarrow$}}
\newcommand{\ran}{\rangle}
\newcommand{\lan}{\langle}
\newcommand{\ar}{\partial}
\newcommand{\si}{\sigma}
\newcommand{\Om}{\Omega}
\newcommand{\hs}[1]{\mbox{$ \hspace{#1}$}}
\newcommand{\sem}{\setminus}
\newcommand{\De}{\Delta}
\def \O {{\Omega}}
\def \d {{\delta}}
\newcommand{\dist}{\operatorname{dist}}
\newcommand{\ree}{\mathbb{R}^{n+1}}
\newcommand{\W}{\mathcal{W}}
\def\H{\mathcal H}
\renewcommand{\d}{\, \mathrm{d}}
\newcommand{\pom}{\partial\Omega}
\def\Xint#1{\mathchoice
{\XXint\displaystyle\textstyle{#1}}%
{\XXint\textstyle\scriptstyle{#1}}%
{\XXint\scriptstyle\scriptscriptstyle{#1}}%
{\XXint\scriptscriptstyle%
\scriptscriptstyle{#1}}%
\!\int}
\def\XXint#1#2#3{{\setbox0=\hbox{$#1{#2#3}{%
\int}$ }
\vcenter{\hbox{$#2#3$ }}\kern-.6\wd0}}
\def\barint{\,\Xint -} 
\def\bariint{\barint_{} \kern-.4em \barint}
\def\bariiint{\bariint_{} \kern-.4em \barint}
\renewcommand{\iint}{\int_{}\kern-.34em \int} 
\renewcommand{\iiint}{\iint_{}\kern-.34em \int} 
\def\mean#1{\mathchoice%
          {\mathop{\kern 0.2em\vrule width 0.6em height 0.69678ex depth -0.58065ex
                  \kern -0.8em \intop}\nolimits_{\kern -0.4em#1}}%
          {\mathop{\kern 0.1em\vrule width 0.5em height 0.69678ex depth -0.60387ex
                  \kern -0.6em \intop}\nolimits_{#1}}%
          {\mathop{\kern 0.1em\vrule width 0.5em height 0.69678ex
              depth -0.60387ex
                  \kern -0.6em \intop}\nolimits_{#1}}%
          {\mathop{\kern 0.1em\vrule width 0.5em height 0.69678ex depth -0.60387ex
                  \kern -0.6em \intop}\nolimits_{#1}}}
\def\vintslides_#1{\mathchoice%
          {\mathop{\kern 0.1em\vrule width 0.5em height 0.697ex depth -0.581ex
                  \kern -0.6em \intop}\nolimits_{\kern -0.4em#1}}%
          {\mathop{\kern 0.1em\vrule width 0.3em height 0.697ex depth -0.604ex
                  \kern -0.4em \intop}\nolimits_{#1}}%
          {\mathop{\kern 0.1em\vrule width 0.3em height 0.697ex depth -0.604ex
                  \kern -0.4em \intop}\nolimits_{#1}}%
          {\mathop{\kern 0.1em\vrule width 0.3em height 0.697ex depth -0.604ex
                  \kern -0.4em \intop}\nolimits_{#1}}}
\newcommand{\aveint}[2]{\mathchoice%
          {\mathop{\kern 0.2em\vrule width 0.6em height 0.69678ex depth -0.58065ex
                  \kern -0.8em \intop}\nolimits_{\kern -0.45em#1}^{#2}}%
          {\mathop{\kern 0.1em\vrule width 0.5em height 0.69678ex depth -0.60387ex
                  \kern -0.6em \intop}\nolimits_{#1}^{#2}}%
          {\mathop{\kern 0.1em\vrule width 0.5em height 0.69678ex depth -0.60387ex
                  \kern -0.6em \intop}\nolimits_{#1}^{#2}}%
          {\mathop{\kern 0.1em\vrule width 0.5em height 0.69678ex depth -0.60387ex
                  \kern -0.6em \intop}\nolimits_{#1}^{#2}}}
\DeclareMathOperator{\interior}{int}
\DeclareMathOperator{\supp}{supp}
\def\eqn#1$$#2$${\begin{equation}\label#1#2\end{equation}}
\def\charfn_#1{{\raise1.2pt\hbox{$\chi
_{\kern-1pt\lower3pt\hbox{{$\scriptstyle#1$}}}$}}}
\def\diam{\operatorname{diam}}
\def\qq1{q_*}
\def\q2{q_{**}}
\def\dist{\operatorname{dist}}
\def\er{\mathbb R}
\def\osc{\operatorname{osc}}
\def\O{\rm O}
\newdimen\vintbar
\def\vint{-\kern-\vintbar\int}
\def\H{\mathcal H}
\def\O{\mathcal O}
\def\W{\mathcal W}
\def\0{\boldsymbol 0}
\newcommand{\eps}{\epsilon}
\newtoks\by
\newtoks\paper
\newtoks\book
\newtoks\jour
\newtoks\yr
\newtoks\pages
\newtoks\vol
\newtoks\publ
\def\name[#1, #2]{#1 #2}
\def\ota{{\hbox{\bf ???}}}
\def\cLear{\by=\ota\paper=\ota\book=\ota\jour=\ota\yr=\ota
\pages=\ota\vol=\ota\publ=\ota}
\def\endpaper{\the\by, \textit{\the\paper},
{\the\jour} \textbf{\the\vol} (\the\yr), \the\pages.\cLear}
\def\endbook{\the\by, \textit{\the\book},
\the\publ, \the\yr.\cLear}
\def\endpap{\the\by, \textit{\the\paper}, \the\jour.\cLear}
\def\endproc{\the\by, \textit{\the\paper}, \the\book, \the\publ,
\the\yr, \the\pages.\cLear}
\renewcommand{\d}{\, \mathrm{d}}
\title[Square function estimates for the evolutionary p-Laplace equation]{Square function estimates for the\\ evolutionary p-Laplace equation}
\author{Kaj Nystr\"om}
\address{Department of Mathematics, Uppsala University, S-751 06 Uppsala, Sweden}
\email{kaj.nystrom@math.uu.se}
\subjclass[2010]{28A75, 30L99, 43A85.}
\date{\today}
\begin{document}
\begin{abstract} We prove novel (local) square function/Carleson measure estimates for non-negative solutions to the evolutionary $p$-Laplace equation in the complement of parabolic Ahlfors-David regular sets. In the case of the heat equation, the Laplace equation as well as the $p$-Laplace equation, the corresponding square function estimates have proven fundamental in symmetry and inverse/free boundary type problems, and in particular in the study of (parabolic) uniform rectifiability. Though the implications of the square function estimates are less clear for the evolutionary $p$-Laplace equation, mainly due its lack of homogeneity, we give some initial applications to parabolic uniform rectifiability, boundary behaviour and Fatou type theorems for $\nabla_Xu$.
\end{abstract}

\maketitle


\section{Introduction}
Given $p$, $1< p<\infty$, fixed, the evolutionary $p$-Laplace equation, often referred to as the $p$-parabolic equation, is the equation
\begin{equation} \label{basic eq}
 \partial_tu-\nabla_X\cdot (|\nabla_X u|^{p-2}\nabla_X u) = 0,
\end{equation}
where  $u=u(X,t)$, $(X,t)\in\mathbb R^n\times \mathbb R$, $n\geq 1$. This equation is degenerate when $p > 2$, and singular
when $1 < p < 2$, as the modulus of ellipticity $|\nabla_X u|^{p-2}$  tends to $0$ and $+\infty$, respectively, as $|\nabla_X u|\to 0$.  When $p=2$, the equation is linear and coincides with the heat equation. It is well-known that if $p\neq 2$, then solutions to \eqref{basic eq} display different behaviours depending on $p$ (degenerate or singular) and in this paper we will only be concerned with the degenerate case $p>2$. Note that the evolutionary $p$-Laplace equation is invariant under  standard Euclidean translation in $X$ and $t$, and under the scalings $(X,t)\to (rX,r^pt)$. However, the equation is not homogeneous: if $u$ is a solution, then in general $cu$ is not a solution unless $c=1$. Also, in contrast to the case $p=2$, for $p>2$ any initial perturbation is propagated with finite speed by the equation and, as a consequence, any form of the strong maximum principle for non-negative solutions fails.

The results established in this paper concern inverse/free boundary type problems, and problems concerning the boundary behaviour of non-negative solutions,  for the degenerate evolutionary $p$-Laplace equation in time-dependent domains. Our contribution is inspired by recent progress concerning
symmetry problems, inverse/free boundary type problems involving the heat equation and parabolic measure, and parabolic uniform rectifiability, see \cite{LeNy,BHMN,BHMN1}, and
by studies of the corresponding  problems, and the boundary behaviour of non-negative solutions, for the $p$-Laplace equation, $1<p<\infty$, see \cite{LN,LN1,LN2,LN3,LN4,LN5,LN6,LN7}. The essence is that in all of these papers there are crucial square function/Carleson measure estimates lurking in the background, estimates based on which the oscillation of the (spatial) gradient of the solution can be controlled near the boundary in a $\mathrm{L}^2$-sense. The purpose of this paper is to prove that similar estimates remain valid for the degenerate evolutionary $p$-Laplace equation. Though the implications of these square function estimates are less clear for the  evolutionary $p$-Laplace equation, mainly due to the lack of homogeneity of the equation, we will give some initial applications to parabolic uniform rectifiability, boundary behaviour and Fatou type theorems for $\nabla_Xu$.

To be more precise we need to introduce some more notation, but we refer to the bulk of the paper for definitions. Let $\Sigma$ be a closed subset of $\mathbb R^{n+1}$ which is parabolic Ahlfors-David regular. Given $X\in\mathbb R^n$, let $B(X,r)$ denote the open ball in $\mathbb R^n$, centered at $X$ and of radius $r$. Let
\begin{align}\label{cyl} C( X, t,r ) \, := \, \{ ( Y, s )\in \mathbb R^{ n + 1 } :  Y\in B(X,r),\  | t - s | < r^2  \},
\end{align}
 whenever $(X,t)\in
\mathbb R^{n+1}$, $r>0$. We call $C(X,t,r) $ a parabolic cylinder of size $r$. Let $\delta(Y,s):=\dist(Y,s,\Sigma)$ denote the parabolic distance from $(Y,s)\in\mathbb R^{n+1}\setminus\Sigma$ to $\Sigma$. We let $\diam (\Sigma)$ denote the parabolic diameter of $\Sigma$.

The following square function estimate is a consequence of the \cite{LeNy} and Lemma \ref{partialADR} stated below.

\begin{theorem}\label{thm1-} Let $\Sigma$ be a closed subset of $\mathbb R^{n+1}$ which is parabolic Ahlfors-David regular with constant $M$, let  $\Omega:= \mathbb R^{n+1}\setminus \Sigma$. Let
$(X_0,t_0)\in \Sigma$, $r_0\in (0,\diam (\Sigma)/2)$. Assume  that $u$ is a non-negative function in $\Omega\cap C(X_0,t_0,2r_0)$  which satisfies $(\partial_t-\Delta_X) u=0$ in  $\Omega\cap C(X_0,t_0,2r_0)$. Assume in addition that there is a constant $\gamma$,  $1\leq\gamma<\infty$  such that
\begin{align}\label{boundshh} \delta(Y,s)|\nabla_X^2u(Y,s)|+|\nabla_X u(Y,s)| +(\delta(Y,s))^{-1}u(Y,s)\leq \gamma,\end{align}
for all $(Y,s)\in \Omega\cap C(X_0,t_0,r_0)$.  Then there exists a constant $c=c(n,M)\in (1,\infty)$ such that
\begin{align*}
\iiint_{\Omega\cap C(X,t,r)}
\bigl (|\nabla_X^2 u(Y,s)|^2+|\partial_t  u(Y,s)|^2\bigr )\, u(Y,s) \, \d Y\d s \lesssim r^{n+1},\end{align*}
whenever $(X,t)\in \Sigma$, $r>0$, $C(X,t,r)\subset C(X_0,t_0,r_0/c)$, and where the implicit constant depends only  on $n$, $M$ and the constant $\gamma$ in \eqref{boundshh}.
\end{theorem}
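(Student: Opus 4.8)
The plan is to deduce the stated Carleson measure estimate from the corresponding square function estimate for non-negative caloric functions proved in \cite{LeNy}, with Lemma \ref{partialADR} supplying the geometric hypotheses that the setting of \cite{LeNy} requires. Two reductions come first. Since $\Sigma$ is parabolic Ahlfors--David regular it is Lebesgue-null (having parabolic Hausdorff dimension $n+1$); thus $\Sigma = \partial\Omega$, $\delta(\cdot) = \dist(\cdot,\partial\Omega)$, and --- by Lemma \ref{partialADR} --- the boundary of $\Omega$ (relevantly, within $C(X_0,t_0,2r_0)$) is parabolic ADR with constant comparable to $M$, which is exactly the input needed to invoke \cite{LeNy} for the non-negative caloric $u$ subject to \eqref{boundshh}. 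Secondly, $(\partial_t-\Delta_X)u=0$ gives $\partial_t u = \Delta_X u$, so $|\partial_t u|^2 = |\operatorname{tr}\nabla_X^2 u|^2 \le n\,|\nabla_X^2 u|^2$ and it is enough to estimate $\iiint_{\Omega\cap C(X,t,r)}|\nabla_X^2 u|^2\,u$. Finally, the parabolic scaling $(X,t)\mapsto(rX,r^2t)$, $u\mapsto r^{-1}u(rX,r^2t)$ leaves \eqref{boundshh} (with the same $\gamma$) and the ADR constant invariant, so one may normalise the target cylinder to unit size; this normalisation is essential below.

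The mechanism behind \cite{LeNy}, which I would reproduce, rests on the Bochner-type identity $(\partial_t-\Delta_X)|\nabla_X u|^2 = -2|\nabla_X^2 u|^2$ for caloric $u$. Testing it against $u\,\phi$, with $\phi$ a smooth cutoff, $\phi\equiv 1$ on $C(X,t,r)$, $\operatorname{supp}\phi\subset C(X,t,2r)$, $|\nabla_X\phi|\lesssim r^{-1}$, $|\partial_t\phi|+|\Delta_X\phi|\lesssim r^{-2}$, and integrating by parts, one reaches an identity of the form
\begin{align*}
\iiint_{\Omega}|\nabla_X^2 u|^2\,u\,\phi \;=\; \iiint_{\Omega}\phi\,|\nabla_X u|^2\,\partial_t u \;+\; \tfrac12\iiint_{\Omega}u\,|\nabla_X u|^2\,(\partial_t+\Delta_X)\phi \;+\; \iiint_{\Omega}|\nabla_X u|^2\,\nabla_X u\cdot\nabla_X\phi
\end{align*}
up to boundary contributions on $\Sigma$ that are controlled using $u\to 0$ there (their rigorous treatment being part of \cite{LeNy}). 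The last two terms are supported in the annulus $C(X,t,2r)\setminus C(X,t,r)$, so by $|\nabla_X u|\le\gamma$, $u\le\gamma\delta\lesssim\gamma r$, and $|C(X,t,2r)|\approx r^{n+2}$ they are each $\lesssim\gamma^3 r^{n+1}$. For the first term, write $|\nabla_X u|^2\partial_t u = |\nabla_X u|^2\Delta_X u$, estimate $|\Delta_X u|\le\sqrt{n}\,|\nabla_X^2 u|$, and apply Young's inequality to get $\varepsilon\iiint_{\Omega}u\,\phi\,|\nabla_X^2 u|^2 + C_\varepsilon\gamma^2\iiint_{\Omega}\phi\,u^{-1}|\nabla_X u|^2$: the first summand is absorbed on the left, and the second is handled via the subsolution identity $(\partial_t-\Delta_X)(u\log u) = -u^{-1}|\nabla_X u|^2$, which after integration by parts turns $\iiint_{\Omega}\phi\,u^{-1}|\nabla_X u|^2$ into $\iiint_{\Omega}(\partial_t+\Delta_X)\phi\cdot u\log u \lesssim r^{-2}\iiint_{C(X,t,2r)}|u\log u|\lesssim_\gamma r^{n+1}$ --- this last bound being where the normalisation to unit scale is used, since $|u\log u|\lesssim_\gamma 1$ there. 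The Ahlfors--David regularity of $\Sigma$ is what makes all of these quantities come out as (a constant times) $r^{n+1}$, through estimates such as $|\{\delta<\lambda\}\cap C(X,t,2r)|\lesssim\lambda\,r^{n+1}$ for the parabolic tubes about $\Sigma$, and it is what makes the resulting bound uniform over all $C(X,t,r)\subset C(X_0,t_0,r_0/c)$ centred on $\Sigma$, i.e.\ a genuine Carleson measure estimate.

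The main obstacle is the term $\iiint_{\Omega}\phi\,|\nabla_X u|^2\,\partial_t u$ (equivalently the interaction term $\iiint_{\Omega}\phi\,\langle\nabla_X^2 u\,\nabla_X u,\nabla_X u\rangle$): it is not removable by further integration by parts, and its naive pointwise bound $|\nabla_X^2 u|\,|\nabla_X u|^2\lesssim\gamma^3/\delta$ is not integrable near $\Sigma$ (the integral diverges logarithmically), so genuine absorption into the weighted quantity $\iiint_\Omega u\,\phi\,|\nabla_X^2 u|^2$, together with the $u\log u$ identity, is unavoidable. A secondary but real technical point is that $u\log u$ (and $|\nabla_X u|^2$ itself) can have unbounded spatial gradient as one approaches $\Sigma$, so the integrations by parts --- and in particular the control of the boundary contributions --- must be justified through a limiting procedure, exhausting $\Omega$ by smooth subdomains (or replacing $u$ by $u+\varepsilon$ and letting $\varepsilon\to 0$) and using $u\to 0$ on $\Sigma$ with the a priori bounds \eqref{boundshh} to handle the error. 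Carrying out these two points is precisely the content of \cite{LeNy}; combined with Lemma \ref{partialADR} it yields the stated estimate.
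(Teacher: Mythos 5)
Your core integration-by-parts computation (the Bochner identity tested against $u$ times a cutoff, plus absorption) is in the right spirit, but the proposal has a genuine gap exactly at the point this paper is designed to address: the treatment of the ``boundary'' contributions when $\Sigma$ is merely parabolic ADR. You use a smooth cutoff adapted only to the cylinder $C(X,t,2r)$, so your integrations by parts (both for the Bochner identity and for the $u\log u$ identity) produce contributions at $\Sigma$, and you defer their rigorous control to \cite{LeNy} via an exhaustion argument. But \cite{LeNy} is set in Lip(1,1/2) graph domains, where surface measure and boundary integrals are available; it does not justify integration by parts up to an arbitrary parabolic ADR set. Moreover, you misstate Lemma \ref{partialADR}: it does not say that $\partial\Omega$ is parabolic ADR (that is the hypothesis, Definition \ref{def1.ADR}); it is the estimate $\iiint(|\nabla_X\Psi_N|+\delta|\partial_t\Psi_N|)\,\d Y\d s\lesssim r^{n+1}$ for the Whitney-cube cutoff $\Psi_N$ of Lemma \ref{lemma:approx-saw}. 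In the paper's scheme (carried out in detail for the $p$-parabolic case in Section \ref{Proofmain}, and intended verbatim for Theorem \ref{thm1-}), one never integrates by parts up to $\Sigma$ at all: all derivatives land on the compactly supported cutoff $\Psi_N$, the resulting ``Good'' terms are bounded pointwise by \eqref{boundshh} and in $L^1$ by Lemma \ref{partialADR}, uniformly in $N$, and one then lets $N\to\infty$ via \eqref{eq:Omega_N-TCM}. That replacement of the boundary terms is the new ingredient here, and it is missing from (indeed misattributed in) your argument.

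A secondary point: your detour through $\iiint \phi\,u^{-1}|\nabla_X u|^2$ and the subsolution identity for $u\log u$ is avoidable, and it compounds the same gap since that identity again requires an integration by parts near $\Sigma$. After reaching the term $\iiint \phi\,|\nabla_X u|^2\,\partial_t u$, one more integration by parts in the spatial variables (writing $\Delta_X u=\partial_t u$) gives $-2\iiint u\,(\partial_t u)^2\phi$ plus cutoff-derivative terms; this both removes the need for absorption via $u\log u$ and produces the $|\partial_t u|^2\,u$ part of the stated Carleson measure directly, in exact parallel with the roles of $J$ and $\tilde J$ in the proof of Theorem \ref{thm1-a}. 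With the Whitney cutoff $\Psi_N$ in place of your cylinder cutoff, every error term is then controlled by $\gamma$ and Lemma \ref{partialADR}, which is the argument the paper intends.
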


Theorem \ref{thm1-} states that
\begin{equation}\label{cara}
\bigl (|\nabla_X^2 u(Y,s)|^2+|\partial_t  u(Y,s)|^2\bigr )\, u(Y,s) \, \d Y\d s
\end{equation}
is a Carleson measure on $\Omega\cap C(X_0,t_0,r_0/c)$.  Note that  \eqref{boundshh} can, by interior regularity estimates for the heat equation, be replaced by the sufficient condition
\begin{align}\label{boundsa} u(Y,s)\leq \gamma \delta(Y,s)\mbox{ for all $(Y,s)\in \Omega\cap C(X_0,t_0,r_0)$.}\end{align}
Obviously \eqref{boundshh} and \eqref{boundsa} implies that $u(Y,s)\to 0$ in $\Omega\cap C(X_0,t_0,r_0)$ as $\delta(Y,s)\to 0$.

 In \cite{LeNy}, see also \cite{N2006,N2012}, the square function estimate in Theorem \ref{thm1-} was used in Lip(1,1/2) domains to prove certain symmetry theorems for the Green function associated to the heat equation. A crucial step in \cite{LeNy} was to prove that the imposed over-determined boundary condition implies that the boundary, originally assumed to be only Lip(1,1/2), is in fact regular Lip(1,1/2), i.e., parabolic uniform rectifiable. The stated square function estimate is fundamental to that argument.  Recently, in \cite{BHMN,BHMN1} this part of \cite{LeNy} was revisited in the context of caloric measure. Indeed, assuming appropriate background hypotheses and that the caloric measure has the weak $A_\infty$ property with respect to the surface measure on $\Sigma$, in \cite{BHMN1}, see also \cite{BHMN}, it is proved  that $\Sigma$ is parabolic uniform rectifiable. A key initial step in \cite{BHMN1} is to prove that the stated hypotheses implies that  $\Sigma\subset \ree$
satisfies the parabolic {weak  half-space approximation} property. The proof of this, as well as the argument in  \cite{BHMN}, rely on a
version of Theorem \ref{thm1-} but in certain dyadic sawtooth/Whitney regions.

In applications, see for example, \cite{LeNy,N2006,N2012,N2006,BHMN,BHMN1}, the Carleson measure estimate in \eqref{cara} is connected to geometry through the fact that we usually do not only have \eqref{boundsa}, but also the lower bound $\gamma^{-1}\delta(Y,s)\leq u(Y,s)$, either in all of $\Omega\cap C(X_0,t_0,r_0)$ or in certain dyadic sawtooth/Whitney regions. Based on this, \eqref{cara} implies that also \begin{equation}\label{caraml}
\bigl (|\nabla_X^2 u(Y,s)|^2+|\partial_t  u(Y,s)|^2\bigr )\, \delta(Y,s) \, \d Y\d s
\end{equation}
is a Carleson measure in the appropriate setting, and this is the result that is actually used to conclude information about $\Sigma$.

Focusing briefly on elliptic problems, the importance of this type of square function estimates for the Laplace equation, as well as the $p$-Laplace equation, was established in
\cite{LV0,LV1,LV2}. In this case $E$ is a closed subset of $\mathbb R^{n}$ which is Ahlfors-David regular of dimension $(n-1)$ with constant $M$, $D:= \mathbb R^{n}\setminus E$. In the case of the  Laplace equation,  it is proved that if $X_0\in E$,  if $u$ is a non-negative function in $D\cap B(X_0,2r_0)$, $\Delta_X u=0$ in  $D\cap B(X_0,2r_0)$, and if $u(Y)\leq \gamma \delta(Y)$ for all $Y\in D\cap B(X_0,r_0)$, then
\begin{equation}\label{cara+}
 |\nabla_X^2 u(Y)|^2\, u(Y) \, \d Y
\end{equation}
is a Carleson measure on $D\cap B(X_0,r_0/c)$. In the case of the $p$-Laplace equation, $1<p<\infty$, $u$ is instead assumed to be a solution to the $p$-Laplace equation
$\nabla_X\cdot(|\nabla_Xu|^{p-2}\nabla_Xu)=0$ in $D\cap B(X_0,2r_0)$. In this case the conclusion is, under additional assumptions concerning the non-degeneracy of $|\nabla_Xu|$, that
\begin{equation}\label{cara++}
|\nabla_X u(Y)|^{p}\, |\nabla_X^2 u(Y)|^2\, u(Y) \, \d Y
\end{equation}
is a Carleson measure on $D\cap B(X_0,r_0/c)$, see Theorem 1 in \cite{Le} and \cite{LV2}. In \eqref{cara+} and \eqref{cara++} the constants in the Carleson measure estimates depend at most on $n,M,\gamma$ and $p$. We refer to \cite{Le} for an excellent and lucid survey of these developments in the context of symmetry, inverse/free boundary type problems, and uniform rectifiability. Building on
\cite{LV0,LV1,LV2}, in \cite{HLMN} it is proved that if $E\subset \mathbb R^{n}$, $n\ge 3$,  is a Ahlfors-David regular set of dimension $(n-1)$, then the weak-$A_\infty$ property of harmonic measure, for the open set
$D= \mathbb R^n\setminus E$, implies uniform rectifiability of $E$. More generally, in \cite{HLMN} a similar result is established for the Riesz measure, $p$-harmonic measure,
associated to the $p$-Laplace operator, $1<p<\infty$. In \cite{HLMN}, versions of the Carleson measure estimates in \eqref{cara+} and \eqref{cara++}, in certain dyadic sawtooth/Whitney regions,  are crucial to the arguments.

The purpose of this paper is to establish a version of Theorem \ref{thm1-} but for the degenerate evolutionary $p$-Laplace equation. In particular, we prove the following theorem.

\begin{theorem}\label{thm1-a} Let $p$, $2<p<\infty$, be fixed. Let $\Sigma$ be a closed subset of $\mathbb R^{n+1}$ which is parabolic Ahlfors-David regular with constant $M$, let  $\Omega:= \mathbb R^{n+1}\setminus \Sigma$. Let
$(X_0,t_0)\in \Sigma$, $r_0\in (0,\diam (\Sigma)/2)$. Assume  that $u$ is a  non-negative function in $\Omega\cap C(X_0,t_0,2r_0)$  which satisfies $\partial_tu-\nabla_X\cdot(|\nabla_Xu|^{p-2}\nabla_Xu)=0$ in  $\Omega\cap C(X_0,t_0,2r_0)$. Assume in addition that
\begin{align}\label{boundsaapall}
 \mbox{$|\nabla_Xu(Y,s)|>0$ for all $(Y,s)\in\Omega\cap C(X_0,t_0,2r_0)$},
 \end{align}
 and that there is a constant $\gamma$, $1\leq \gamma<\infty$,  such that
\begin{align}\label{boundsaapa} \delta(Y,s)|\nabla_X^2u(Y,s)|+|\nabla_X u(Y,s)| +(\delta(Y,s))^{-1}u(Y,s)\leq \gamma,\end{align}
for all $(Y,s)\in \Omega\cap C(X_0,t_0,r_0)$.  Then there exists a constant $c=c(n,M)\in (1,\infty)$ such that the following holds. Let $q\geq p$ and let $r:= q-2p+4$. Then
\begin{align*}
 \iiint_{\Omega\cap C(X,t,r)}\bigl (|\nabla_X u(Y,s)|^{q}\, |\nabla_X^2 u(Y,s)|^2+|\nabla_X u(Y,s)|^{r}\, |\partial_t u(Y,s)|^2\bigr )\, u(Y,s) \, \d Y\d s \lesssim r^{n-1},
\end{align*}
whenever $(X,t)\in \Sigma$, $r>0$, $C(X,t,r)\subset C(X_0,t_0,r_0/c)$, and with implicit constant depending only  on $n$, $p$, $M$, $\gamma$, and  $q$.
\end{theorem}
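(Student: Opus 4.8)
The plan is to reduce the statement to a single weighted square‑function bound, to use that the hypothesis $|\nabla_Xu|>0$ forces $u$ to be locally smooth and turns the equations obtained from differentiating \eqref{basic eq} into genuine (weighted) parabolic equations, and then to run a weighted energy/Caccioppoli argument with a parabolic cut‑off, controlling the unavoidable time‑derivative terms by a coupled quantity and the cut‑off errors by the parabolic ADR structure; this combines the mechanism of the elliptic $p$‑Laplace Carleson estimates in \cite{LV2,Le} with the parabolic bookkeeping behind Theorem \ref{thm1-}. For the reduction, note that $|\partial_tu|=|\nabla_X\cdot(|\nabla_Xu|^{p-2}\nabla_Xu)|\le(p-1)|\nabla_Xu|^{p-2}|\nabla_X^2u|$, so $|\nabla_Xu|^{r}|\partial_tu|^2u\le(p-1)^2|\nabla_Xu|^{q}|\nabla_X^2u|^2u$ since $r+2p-4=q$; hence it suffices to bound $\iiint_{\Omega\cap C(X,t,r)}|\nabla_Xu|^{q}|\nabla_X^2u|^2u\,\d Y\d s$, and, after enlarging $c$, to prove $\iiint|\nabla_Xu|^{q}|\nabla_X^2u|^2u\,\phi^2\,\d Y\d s\lesssim r^{n-1}$ for a cut‑off $\phi$ with $\phi\equiv1$ on $C(X,t,r)$, $\supp\phi\subset C(X,t,2r)\subset C(X_0,t_0,r_0/c)$, $|\nabla_X\phi|\lesssim r^{-1}$ and $|\partial_t\phi|\lesssim r^{-2}$.

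\textbf{Differentiated equations and the key identity.} On compact subsets of $\{|\nabla_Xu|>0\}$ the equation is uniformly parabolic, hence $u$ is smooth there; differentiating \eqref{basic eq} in $x_k$ shows that $u_k:=\partial_{x_k}u$, and likewise $\partial_tu$, solves $\partial_tw=\nabla_X\cdot(\mathcal A\nabla_Xw)$, where $\mathcal A=\mathcal A(\nabla_Xu)=|\nabla_Xu|^{p-2}\bigl(I+(p-2)|\nabla_Xu|^{-2}\,\nabla_Xu\otimes\nabla_Xu\bigr)$ is symmetric with eigenvalues in $[\,|\nabla_Xu|^{p-2},(p-1)|\nabla_Xu|^{p-2}\,]$ and $\mathcal A\nabla_Xu=(p-1)|\nabla_Xu|^{p-2}\nabla_Xu$; also $(p-1)\partial_tu=\nabla_X\cdot(\mathcal A\nabla_Xu)$. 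Writing $W:=(\nabla_X^2u)\nabla_Xu=\nabla_X(\tfrac12|\nabla_Xu|^2)$ and $\mathcal E:=\sum_k\langle\mathcal A\nabla_Xu_k,\nabla_Xu_k\rangle$ one has $|\nabla_Xu|^{p-2}|\nabla_X^2u|^2\le\mathcal E\le(p-1)|\nabla_Xu|^{p-2}|\nabla_X^2u|^2$, and summing $u_k$ times the $u_k$‑equation and using $\sum_k u_k\mathcal A\nabla_Xu_k=\mathcal AW$ gives the pointwise identity $\mathcal E=\nabla_X\cdot(\mathcal AW)-\partial_t\bigl(\tfrac12|\nabla_Xu|^2\bigr)$.

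\textbf{Energy estimate.} Multiply this identity by the nonnegative test function $\psi:=|\nabla_Xu|^{q-p+2}u\phi^2$ and integrate over $\Omega\cap C(X,t,2r)$, integrating by parts in $X$ and in $t$; the boundary contribution at $\Sigma$ is removed by the standard truncation $u\mapsto\max(u-\varepsilon,0)$, $\varepsilon\downarrow0$, using $u\to0$ at $\Sigma$. Since $\mathcal E\ge|\nabla_Xu|^{p-2}|\nabla_X^2u|^2$, the left side dominates $\mathcal I:=\iiint|\nabla_Xu|^{q}|\nabla_X^2u|^2u\,\phi^2$. On the right side there appear: the favourable term $-(q-p+2)\iiint|\nabla_Xu|^{q-p}u\phi^2\langle\mathcal AW,W\rangle\le0$; the term $-(p-1)T$ with $T:=\iiint|\nabla_Xu|^{q}\phi^2\langle(\nabla_X^2u)\nabla_Xu,\nabla_Xu\rangle$, coming from $\langle\mathcal A\nabla_Xu,W\rangle=(p-1)|\nabla_Xu|^{p-2}\langle W,\nabla_Xu\rangle$; a residual time term $\tfrac1{q-p+4}\iiint|\nabla_Xu|^{q-p+4}(\partial_tu)\phi^2$ left after an integration by parts in $t$; and error integrals supported on $\supp\nabla_X\phi\cup\supp\partial_t\phi$. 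Now $|\nabla_Xu|^{q}\langle(\nabla_X^2u)\nabla_Xu,\nabla_Xu\rangle=\tfrac1{q+2}\nabla_Xu\cdot\nabla_X(|\nabla_Xu|^{q+2})$; integrating by parts once and substituting $\Delta_Xu=|\nabla_Xu|^{2-p}\partial_tu-(p-2)|\nabla_Xu|^{-2}\langle(\nabla_X^2u)\nabla_Xu,\nabla_Xu\rangle$ from \eqref{basic eq} leads to a self‑referential relation, solvable because $q-p+4=r+p>0$, which expresses $T$, up to $\supp\nabla_X\phi$‑errors, as a multiple of $K:=\iiint|\nabla_Xu|^{r+p}(\partial_tu)\phi^2$. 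Collecting terms, $\mathcal I\le\tfrac{p}{\,r+p\,}K+(\text{errors})$.

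\textbf{Cut‑off errors, the companion estimate, and the main obstacle.} Each error integral is $r^{-1}$ or $r^{-2}$ times an integral over $C(X,t,2r)\setminus C(X,t,r)$ of a product of $|\nabla_Xu|,u,|\nabla_X^2u|,|\partial_tu|$; by $|\nabla_Xu|\le\gamma$, $u\le\gamma\delta$, $\delta|\nabla_X^2u|\le\gamma$ and $|\partial_tu|\lesssim_p\gamma^{p-1}\delta^{-1}$ the integrands are bounded (the $\delta$‑powers cancelling), so a Whitney decomposition of the annular shell together with the parabolic ADR property of $\Sigma$ bounds them by $C(n,p,M,\gamma,q)\,r^{n-1}$. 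The genuine difficulty is the surviving term $K$: it is linear in $\partial_tu$, the weights do not absorb the $\delta^{-2}$ coming from $|\partial_tu|^2$, and a naive integration by parts merely reproduces $K$ — this is the new phenomenon relative to the elliptic case, where the analogous step gives $T=O(\text{error})$. One disposes of $K$ through the parabolic structure: integrate by parts in $t$, use $\partial_t(|\nabla_Xu|^{r+p})=(r+p)|\nabla_Xu|^{r+p-2}\sum_k u_k\,\nabla_X\cdot(\mathcal A\nabla_Xu_k)$, integrate by parts in $X$, and thereby couple $K$ to a companion square‑function quantity $\mathcal I':=\iiint|\nabla_Xu|^{r}|\partial_tu|^2u\,\phi^2$, which is in turn controlled (with a favourable sign) by an analogous energy estimate for $\partial_tu$. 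Closing the resulting coupled system of inequalities for $(\mathcal I,\mathcal I')$ finishes the proof: the off‑diagonal couplings are controlled by Young's inequality with a small parameter and by the pointwise bound $\mathcal I'\lesssim_p\mathcal I$ from the reduction, while the diagonal contributions are absorbed using the ellipticity of $\mathcal A$ (for $p>2$ the relevant ellipticity constant $\min(1,p-1)=1$ falls on the correct side). The exponent relation $r=q-2p+4$ is exactly what makes all the arithmetic in these manipulations match, and the parabolic ADR hypothesis enters only through the cut‑off error bound. I expect the careful bookkeeping of the constants generated by the repeated parabolic integrations by parts — so that the coupled system actually closes rather than collapsing to a tautology, much as in \cite{LeNy} for the heat equation — to be the main technical point.
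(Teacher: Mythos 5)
Your set-up is sound and, up to the point where the term $K:=\iiint|\nabla_Xu|^{r+p}(\partial_tu)\phi^2$ survives, your computation reproduces the paper's (your identity $\mathcal E=\nabla_X\cdot(\mathcal AW)-\partial_t(\tfrac12|\nabla_Xu|^2)$, the favourable $\langle\mathcal AW,W\rangle$ term, and the relation expressing $T$ through $K$ correspond to the paper's $J_3$, $\tilde T^a$ and $E_1$ bookkeeping). The genuine gap is in how you dispose of $K$. If you carry out exactly the manipulation you describe (integrate by parts in $t$, replace $\partial_tu_{x_k}$ by $\nabla_X\cdot(\mathcal A\nabla_Xu_{x_k})$, integrate by parts in $X$), the outcome is $pK=(r+p)(r+p-2)\iiint|\nabla_Xu|^{r+p-4}\langle\mathcal AW,W\rangle u\phi^2+(r+p)\iiint|\nabla_Xu|^{r+p-2}\mathcal E\,u\phi^2+G$, i.e.\ $K$ is, up to Good terms, bounded \emph{below} by $\tfrac{r+p}{p}\mathcal I$; so your inequality $\mathcal I\le\tfrac{p}{r+p}K+G$ is essentially an identity, and substituting any bound of the form $K\lesssim(\mathcal I\,\mathcal I')^{1/2}$ or $K\lesssim\mathcal I$ (via $\mathcal I'\le C(n,p)\mathcal I$) back into it is circular: the constants are forced to be $\ge1$ (e.g.\ for $q=p$ one gets factors like $p(p-1)$ or $p(\sqrt n+p-2)$), so Young's inequality with a small parameter cannot absorb them, and a Caccioppoli-type estimate for $w=\partial_tu$ controls $\iiint|\nabla_Xu|^{p-2}|\nabla_X\partial_tu|^2(\cdot)$, not $\mathcal I'$, and carries no favourable sign. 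This is precisely the "tautology" you warn against in your last sentence, and your proposal does not supply the mechanism that breaks it. The paper breaks it differently: it does \emph{not} reduce the $|\partial_tu|^2$ quantity away pointwise, but estimates $\tilde J=\tfrac12\iiint|\nabla_Xu|^{r}u_t^2u\eta_I$ independently (via $u_t^2\lesssim|\nabla_Xu|^{2p-4}(\Delta u)^2+|\nabla_Xu|^{2p-8}(\Delta_\infty u)^2$ and its own integrations by parts), which produces the same linear bad term $E_1=\iiint|\nabla_Xu|^{q+2}(\Delta u)\eta_I$ but with the \emph{opposite} sign; the combination $J+qJ_3+\alpha\tilde J$ with $\alpha$ solving \eqref{sq11+ajykppagaglll} cancels $E_1$ exactly, and the leftover $(\Delta_\infty u)^2$ term has a favourable sign for $q\ge p-2$. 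By discarding the $u_t^2$ quantity at the outset through the pointwise domination $|\nabla_Xu|^{r}u_t^2\lesssim|\nabla_Xu|^{q}|\nabla_X^2u|^2$ (which is correct but counterproductive), you removed the only available second, sign-reversed estimate.

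A secondary but real flaw is the integration-by-parts framework: your cut-off $\phi$ equals $1$ on $C(X,t,r)$ and so does not vanish near $\Sigma$, and the truncation $u\mapsto\max(u-\varepsilon,0)$ only protects terms carrying an explicit factor of $u$; the manipulations of $T$ and $K$ involve integrals with no $u$ factor (e.g.\ $\iiint\nabla_Xu\cdot\nabla_X(|\nabla_Xu|^{q+2})\phi^2$), whose integration by parts over $\Omega\cap C(X,t,2r)$ produces boundary contributions on $\Sigma$ that do not vanish and are not even classically defined for a general parabolic ADR set. The paper avoids this entirely by testing against the Whitney-based cut-off $\Psi_N=\sum\eta_I$ of Lemma \ref{lemma:approx-saw}, compactly supported in $\Omega$, and it is exactly in bounding the resulting cut-off errors (Lemma \ref{partialADR}) that the ADR hypothesis is used; note also that this yields the bound $r^{n+1}$ (as in Theorem \ref{thm1-}), whereas your shell estimate as written gives $r^{n+1}$ as well, not the $r^{n-1}$ you assert.
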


Theorem \ref{thm1-a} states, subject to the stated restrictions on $(q,r)$, that
\begin{align}\label{apa}
 \bigl (|\nabla_X u(Y,s)|^{q}\, |\nabla_X^2 u(Y,s)|^2+|\nabla_X u(Y,s)|^{r}\, |\partial_t u(Y,s)|^2\bigr )\, u(Y,s) \, \d Y\d s
\end{align}
is a Carleson measure on $\Omega\cap C(X_0,t_0,r_0/c)$. Note that $(q,r)=(p,4-p)$ are admissible exponents in Theorem \ref{thm1-a}, and hence \begin{align}\label{apakka}
 \bigl (|\nabla_X u(Y,s)|^{p}\, |\nabla_X^2 u(Y,s)|^2+|\nabla_X u(Y,s)|^{-(p-4)}\, |\partial_t u(Y,s)|^2\bigr )\, u(Y,s) \, \d Y\d s
\end{align}
is a Carleson measure on $\Omega\cap C(X_0,t_0,r_0/c)$. For stationary functions, \eqref{apakka} is consistent with the measure in \eqref{cara++}.

We also prove the following result which should be seen as a corollary to Theorem \ref{thm1-a}.

\begin{corollary}\label{thm1-b} Let $p$, $\Sigma$,  $\Omega$, $(X_0,t_0)$, $r_0$, and $u$ be as in the statement of Theorem \ref{thm1-a}. Assume \eqref{boundsaapall} and \eqref{boundsaapa}, and in addition that
\begin{align}\label{boundsaapacor} \delta^2(Y,s)|\nabla_X^3u(Y,s)|\leq \gamma,\end{align}
for all $(Y,s)\in \Omega\cap C(X_0,t_0,r_0)$. Then there exists a constant $c=c(n,M)\in (1,\infty)$ such that the following holds. For $a\geq 1$ and $b\geq 1$ sufficient large, depending on $p$, we have
\begin{align*}
\iiint_{\Omega\cap C(X,t,r)}\bigl (|\nabla_X u(Y,s)|^{a}|\nabla_X^3u(Y,s)|^2+|\nabla_X u(Y,s)|^{b}|\partial_t\nabla_Xu(Y,s)|^2\bigr )u^3(Y,s)\, \d Y\d s &\lesssim r^{n-1},
\end{align*}
whenever $(X,t)\in \Sigma$, $r>0$, $C(X,t,r)\subset C(X_0,t_0,r_0/c)$, and with implicit constant depending only  on $n$, $p$, $M$, $\gamma$, $a$ and $b$.
\end{corollary}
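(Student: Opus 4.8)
The plan is to derive Corollary~\ref{thm1-b} from Theorem~\ref{thm1-a} by applying the latter to a suitable auxiliary function built from $u$, exploiting the invariance properties of the evolutionary $p$-Laplace equation. The key observation is that if $u$ solves \eqref{basic eq} and each spatial derivative $\partial_{X_i}u$ could be treated as a solution of a related equation, then the second-order Carleson estimate for $\partial_{X_i}u$ would produce third-order terms in $u$. However, $\partial_{X_i}u$ does not solve \eqref{basic eq}; it solves the (linearized) equation obtained by differentiating \eqref{basic eq}, which is a non-homogeneous linear parabolic equation with coefficients depending on $\nabla_X u$. So the cleaner route is to re-run the proof scheme of Theorem~\ref{thm1-a} one order higher: repeat the integration-by-parts/Rellich-type identities used to establish Theorem~\ref{thm1-a}, but now starting from the quantity $u^3$ (or a comparable power of $u$) in place of $u$, and carrying one extra spatial derivative through the computation. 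The hypothesis \eqref{boundsaapacor}, i.e.\ $\delta^2|\nabla_X^3 u|\leq\gamma$, together with \eqref{boundsaapa}, gives precisely the pointwise control needed so that the boundary and lower-order terms generated by the extra integration by parts are absorbed or dominated by the already-established Carleson measure \eqref{apa}.

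Concretely, I would proceed as follows. First, fix a cube $C(X,t,r)\subset C(X_0,t_0,r_0/c)$ and a cutoff $\phi$ adapted to it, with the usual parabolic scaling $|\nabla_X\phi|\lesssim r^{-1}$, $|\nabla_X^2\phi|\lesssim r^{-2}$, $|\partial_t\phi|\lesssim r^{-2}$. Second, introduce $v:=u^\theta$ for an exponent $\theta$ to be chosen (the factor $u^3$ in the statement suggests $\theta$ such that the weight in the identity comes out as $u^3$; the precise value is dictated by the chain rule applied to the nonlinearity $|\nabla_X u|^{p-2}$). Third, write the differentiated equation: applying $\partial_{X_j}$ to $\partial_t u=\nabla_X\cdot(|\nabla_X u|^{p-2}\nabla_X u)$ yields an equation for $w_j:=\partial_{X_j}u$ of the form $\partial_t w_j=\nabla_X\cdot\big(|\nabla_X u|^{p-2}\nabla_X w_j+(p-2)|\nabla_X u|^{p-4}(\nabla_X u\cdot\nabla_X w_j)\nabla_X u\big)$. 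Fourth, test this equation against $w_j\,u^{\alpha}\phi^{2}$ (for an appropriate power $\alpha$) and integrate by parts; the principal term produces $\iiint |\nabla_X u|^{p-2}|\nabla_X w_j|^2 u^{\alpha}\phi^2$, which after summing in $j$ and using $|\nabla_X u|>0$ (hypothesis \eqref{boundsaapall}) and the upper bound $|\nabla_X u|\leq\gamma$ controls $\iiint|\nabla_X u|^{a}|\nabla_X^3 u|^2 u^3\phi^2$ for suitable $a$. Fifth, the time-derivative term $\iiint|\nabla_X u|^{b}|\partial_t\nabla_X u|^2 u^3\phi^2$ is handled exactly as the corresponding $|\partial_t u|^2$ term was in the proof of Theorem~\ref{thm1-a}, now with an extra spatial derivative, using the equation to trade $\partial_t\nabla_X u$ against $\nabla_X(|\nabla_X u|^{p-2}\nabla_X u)$ plus lower-order terms, all of which are $\nabla_X^2 u$-type quantities already controlled by \eqref{apa}.

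The error terms fall into three groups, each of which I would bound in turn. (i) Terms where a derivative lands on $\phi$: these carry factors $r^{-1}$ or $r^{-2}$ and are estimated, after Cauchy--Schwarz, by the measure of $C(X,t,r)$ times the sup-bounds \eqref{boundsaapa}, \eqref{boundsaapacor}, giving $\lesssim r^{n+1}\cdot r^{-2}\cdot(\text{bounded})\lesssim r^{n-1}$; this is where the exponent $n-1$ on the right-hand side comes from, consistent with Theorem~\ref{thm1-a}. (ii) Terms of lower differential order in $u$, i.e.\ involving $|\nabla_X^2 u|^2 u$ with extra powers of $|\nabla_X u|$: these are precisely (after adjusting exponents, which is why $q$ in Theorem~\ref{thm1-a} was allowed to be arbitrarily large) dominated by the Carleson measure \eqref{apa} already established, so they contribute $\lesssim r^{n-1}$. (iii) Cross terms from the nonlinearity's extra summand $(p-2)|\nabla_X u|^{p-4}(\cdots)$: these are structurally the same as the principal term up to constants depending on $p$, so they are absorbed by choosing $c$ (the small-cube constant) and using a standard absorption/Young's inequality argument with a small parameter.

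The main obstacle I anticipate is the bookkeeping of exponents on $|\nabla_X u|$: because the equation is not homogeneous and because $|\nabla_X u|$ appears both as a weight and (via \eqref{boundsaapall}) as something that must not be allowed to vanish, one must check that after the extra integration by parts every occurrence of $|\nabla_X u|$ carries a \emph{non-negative} power, so that the upper bound $|\nabla_X u|\leq\gamma$ can be used and no spurious singularity at $\{\nabla_X u=0\}$ is introduced. This is exactly why the statement only claims the result ``for $a\geq 1$ and $b\geq 1$ sufficiently large, depending on $p$'': one chooses $a,b$ large enough that, when the chain rule is applied to the $|\nabla_X u|^{p-2}$ factor and its derivatives are estimated via \eqref{boundsaapa}--\eqref{boundsaapacor}, every resulting exponent of $|\nabla_X u|$ is $\geq 0$. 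Verifying this, and simultaneously tracking the power of $u$ so that it comes out as exactly $u^3$ (the natural ``one more factor of $u$ than Theorem~\ref{thm1-a}, squared into the cube via the extra derivative'' count), is the routine-but-delicate heart of the argument; everything else is a direct transcription of the proof of Theorem~\ref{thm1-a}.
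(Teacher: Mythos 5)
Your overall strategy (redo the scheme of Theorem \ref{thm1-a} one derivative higher, use Theorem \ref{thm1-a} to absorb lower-order terms, and choose $a,b$ large to keep all exponents of $|\nabla_X u|$ non-negative) is the right spirit, and it is indeed what the paper does. However, your central testing step is off by one order of differentiation, and this is a genuine gap. If $w_j=\partial_{x_j}u$ solves the once-differentiated equation and you test it against $w_j\,u^{\alpha}\phi^2$, the principal energy term is $\iiint |\nabla_X u|^{p-2}|\nabla_X w_j|^2u^{\alpha}\phi^2$, and $|\nabla_X w_j|=|\nabla_X u_{x_j}|$ is a \emph{second} derivative of $u$; this does not control $\iiint |\nabla_X u|^{a}|\nabla_X^3u|^2u^3\phi^2$, it merely reproduces a quantity of the type already handled in Theorem \ref{thm1-a}. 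To reach third derivatives one must go one step further: the paper starts directly from $K:=\iiint|\nabla_X u|^{a}(u_{x_ix_jx_l})^2u^3\eta_I$, integrates by parts to produce $\Delta u_{x_ix_j}$, substitutes the twice-differentiated (non-divergence form) equation for $\Delta u_{x_ix_j}$, and—after absorbing everything of lower order via Cauchy--Schwarz with $\epsilon$, \eqref{boundsaapa}, \eqref{boundsaapacor} and Theorem \ref{thm1-a}—exploits the sign of the resulting term $(p-2)\iiint|\nabla_X u|^{a-2}(\nabla_X u\cdot\nabla_X u_{x_ix_jx_m})^2u^3\eta_I\ge 0$ (here $p>2$ is essential) to close the estimate for $K$. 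Your proposal never produces this structure, so the third-order spatial bound is not actually established. Relatedly, your treatment of the time term is also too optimistic: writing $u_{tx_k}$ via the differentiated equation yields $|\nabla_X u|^{p-2}\Delta u_{x_k}+(p-2)|\nabla_X u|^{p-4}(\Delta_\infty u)_{x_k}+\dots$, which contains \emph{third} spatial derivatives, so it is not "already controlled by \eqref{apa}"; in the paper the bound for $L$ uses the just-proved estimate for $K$ in addition to Theorem \ref{thm1-a}.

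A secondary but non-negligible point: you work with an ordinary cutoff $\phi\in C_0^\infty(C(X,t,r))$, which does not vanish near $\Sigma$. Since $u$ is only defined in $\Omega$ and its higher derivatives blow up like negative powers of $\delta$, the repeated integrations by parts are not justified with such a cutoff (they would generate uncontrolled boundary contributions on the ADR set $\Sigma$). This is exactly why the paper performs all computations against the Whitney-based cutoffs $\eta_I$, $I\in\W_N$, from Lemma \ref{lemma:approx-saw}, whose derivative terms are summable with the right Carleson bound by Lemma \ref{partialADR}, uniformly in $N$, after which one lets $N\to\infty$. Your error-term accounting (e.g.\ "$r^{n+1}\cdot r^{-2}$" for the cutoff terms) should be replaced by this mechanism: each "good" term carries a factor $(\eta_I)_{x_j}$ or $(\eta_I)_t$, is bounded pointwise using \eqref{boundsaapa}--\eqref{boundsaapacor}, and then Lemma \ref{partialADR} gives the stated Carleson bound.
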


Concerning Theorem \ref{thm1-a} and Corollary \ref{thm1-b}, several comments are in order.

First, weak solutions to the equation in \eqref{basic eq} are in general only $C^{1,\alpha}$ regular, but if \eqref{boundsaapall} holds then $u$ is a smooth classical solution to the equation in \eqref{basic eq},  see subsection \ref{nondd}. Note also that to even have the quantities appearing in \eqref{apa}, as well as similar quantities appearing in the proof of Theorem \ref{thm1-a}, well defined and finite we also need \eqref{boundsaapall}. Differently, we could simply have assumed
that $u$ is a smooth solution in $\Omega\cap C(X_0,t_0,2r_0)$ and formulated a result as in  Corollary \ref{thm1-b} (saying that there exists sufficient large $a\geq 1$ and $b\geq 1$). Concerning the Schauder estimates for higher order spatial partial derivatives stated in \eqref{boundsaapa}, \eqref{boundsaapacor},  if $(\delta(Y,s))^{-1}u(Y,s)\leq \gamma$, then these estimates can be verified if, for example,
\begin{align}\label{strongND}
\gamma^{-1}\leq |\nabla_X u(Y,s)| \mbox{ for all }(Y,s)\in\Omega\cap C(X_0,t_0,2r_0),
\end{align}
we here again refer to subsection \ref{nondd}. If \eqref{strongND} holds, then \eqref{boundsaapa} and \eqref{strongND} imply that there is a constant $\tilde\gamma$, $1\leq\tilde\gamma<\infty$, such that
\begin{align}\label{strongND+}
\tilde\gamma^{-1}\leq |\nabla_X u(Y,s)|\leq \tilde\gamma,
\end{align}
for all $(Y,s)\in\Omega\cap C(X_0,t_0,r_0)$. In particular,  the statements of Theorem \ref{thm1-a} and Corollary \ref{thm1-b} implicitly  assume some strong non-degeneracy conditions on
$|\nabla_X u(Y,s)|$.

Second, while Theorem \ref{thm1-a} and Corollary \ref{thm1-b} may seem quite similar they are stated based on a distinction. Indeed,  consider  the linear operators
\begin{align}
\mathcal{H}&:=\partial_t-\nabla_X\cdot (A(X,t)\nabla_X),\notag\\
\tilde {\mathcal{H}}&:= \partial_t-\nabla_X\cdot (\tilde A(X,t)\nabla_X),
\end{align}
where
\begin{align}
A_{ij}(Y,s)&:=|\nabla u(Y,s)|^{p-2}\delta_{ij},\notag\\
 \tilde A_{ij}(Y,s)&:=|\nabla_Xu(Y,s)|^{p-4}[(p-2)u_{x_i}(Y,s)u_{x_j}(Y,s)+\delta_{ij}|\nabla_X u(Y,s)|^2].
\end{align}
Note that
\begin{align}
\min\{1,p-1\}|\nabla_Xu(Y,s)|^{p-2}|\xi|^2\leq \tilde A_{ij}(Y,s)\xi_i\xi_j\leq \max\{1,p-1\}|\nabla_Xu(Y,s)|^{p-2}|\xi|^2.
\end{align}
In particular, the constants of ellipticity of the linear operators $\mathcal{H}$ and  $\tilde {\mathcal{H}}$ are, at $(Y,s)$,  determined by $|\nabla_Xu(Y,s)|^{p-2}$. $\tilde {\mathcal{H}}$ is the operator obtained by formally differentiating the $p$-parabolic equation, i.e., if $u$ is a (weak) solution to \eqref{basic eq}, and if we let $v=u_{x_k}$, then formally $\tilde {\mathcal{H}}v=0$. Note also that if $u$ is a weak solution to \eqref{basic eq}, then
\begin{align}
{\mathcal{H}}u&=\partial_tu-\nabla_X\cdot (A(X,t)\nabla_Xu)=0,\notag\\
\tilde {\mathcal{H}}u&=\partial_tu-(p-1)\nabla_X\cdot (A(X,t)\nabla_Xu),
\end{align}
but in general $\tilde {\mathcal{H}}u$ is not equal to $0$.  In particular, due to the lack of homogeneity of the evolutionary $p$-Laplace equation, $u$ and its spatial partial derivatives solve different linear parabolic partial differential equations. Now the point is that Theorem \ref{thm1-a} is  a statement about solutions to the $p$-parabolic equation, while jointly Theorem \ref{thm1-a} and Corollary \ref{thm1-b} can be seen as statements concerning the coefficients of the linear operators ${\mathcal{H}}$ and $\tilde {\mathcal{H}}$. Indeed, if
\begin{align}\label{strongND+ml}
\tilde\gamma^{-1}\leq \min\{|\nabla_X u(Y,s)|,(\delta(Y,s))^{-1}u(Y,s)\}\leq \max\{|\nabla_X u(Y,s)|,(\delta(Y,s))^{-1}u(Y,s)\} \leq \tilde\gamma,
\end{align}
for all $(Y,s)\in\Omega\cap C(X_0,t_0,r_0)$, then Theorem \ref{thm1-a} and Corollary \ref{thm1-b} imply that
\begin{equation}\label{caraa}
\bigl (|\nabla_X A(Y,s)|^2+|\nabla_X \tilde A(Y,s)|^2\bigr )\, \delta (Y,s)\, \d Y\d s,
\end{equation}
as well as
\begin{equation}\label{carab}
\bigl (|\partial_t A(Y,s)|^2+|\partial_t\tilde A(Y,s)|^2\bigr )\, \delta^3(Y,s) \, \d Y\d s,
\end{equation}
are Carleson measures on $\Omega\cap C(X_0,t_0,r_0/c)$. This implies, as discussed in subsection \ref{pmeasure} below, that at least in the setting of regular Lip(1,1/2) domains, the parabolic measures associated to ${\mathcal{H}}$ and $\tilde {\mathcal{H}}$ satisfy scale invariant estimate with respect to surface measure in the sense of $A_\infty$.  The latter will be explored in our applications to parabolic uniform rectifiability, boundary behaviour and Fatou type theorems for $\nabla_Xu$.

Recall that the notion of parabolic uniformly rectifiable sets was introduced by the author, together with S. Hofmann and J. Lewis, in \cite{HLN}, \cite{HLN1}, and concerns  time-varying boundaries which lack differentiability, and which are locally not necessarily given by graphs. Instead {geometry is controlled by a local geometric square function, based on which key geometric information and structure can be extracted: this is captured in the notion of parabolic uniformly rectifiable sets.  The local geometric square function quantifies, on each scale, how the underlying set deviates from time-independent hyperplanes in a $L^2$-sense (mean square sense). Parabolic uniform rectifiability is the dynamic counterpart of the uniform rectifiability studied in the  monumental works of G. David and S. Semmes \cite{DS1}, \cite{DS2}. The notions of parabolic uniformly rectifiable sets and parabolic uniform rectifiability extract the geometrical theoretical essence of the (time-dependent) (regular) parabolic Lipschitz graphs introduced in \cite{H}, \cite{HL}, \cite{LM}, \cite{LS}. In these works the authors found the correct notion of (time-dependent) (regular) parabolic Lipschitz graphs from the point of view of parabolic singular integrals and parabolic/caloric measure. In particular, in the context of Lip(1,1/2) graphs, see below, a graph being regular Lip(1,1/2) is equivalent to the graph being parabolic uniform rectifiable.

As a first application of the square function/Carleson measure estimates in Theorem \ref{thm1-a} and Corollary \ref{thm1-b}, we consider parabolic uniform rectifiability and we prove the following theorem.

\begin{theorem}\label{Free}  Let $p$, $2<p<\infty$, be given. Let $\Omega\subset\mathbb R^{n+1}$ be a
(unbounded) Lip(1,1/2)   graph
domain with constant $b_1$ and with boundary $\Sigma$.  Let
$(X_0,t_0)\in \Sigma$, assume  that $u$ is a  non-negative function in $\Omega\cap C(X_0,t_0,2r_0)$  which is a weak solution to  $\partial_tu-\nabla_X\cdot(|\nabla_Xu|^{p-2}\nabla_Xu)=0$ in  $\Omega\cap C(X_0,t_0,2r_0)$.
 Assume that  there exist a constant $\gamma$,  $1\leq\gamma<\infty$,  such that
\begin{align}\label{boundsfree}\gamma^{-1}\leq  |\nabla_X u(Y,s)|,\ (\delta(Y,s))^{-1}u(Y,s)\leq \gamma,\end{align}
for all $(Y,s)\in \Omega\cap C(X_0,t_0,r_0)$. Then there exists a constant $c=c(n,b_1)\in (1,\infty)$ such that,
\begin{align}\label{conc1}
&\mbox{$\Sigma\cap C(X_0,t_0,r_0/c)$ is locally parabolic uniform rectifiable in the sense that this set}\notag\\
&\mbox{is a local regular Lip(1,1/2) graph, with constant $b_2=b_2(n,p,b_1,\gamma)$}.
\end{align}
\end{theorem}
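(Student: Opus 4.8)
The plan is to derive Theorem~\ref{Free} from Theorem~\ref{thm1-a} and Corollary~\ref{thm1-b}, combined with the characterization of regular Lip(1,1/2) graphs inside the theory of parabolic uniform rectifiability of \cite{HLN,HLN1}. First I would check that the hypotheses of Theorem~\ref{thm1-a} and Corollary~\ref{thm1-b} are in force. A Lip(1,1/2) graph domain with constant $b_1$ has boundary $\Sigma$ which is parabolic Ahlfors--David regular with constant $M=M(n,b_1)$, and since $\diam(\Sigma)=\infty$ there is no constraint on $r_0$, so the structural assumptions hold. By \eqref{boundsfree} we have $|\nabla_X u|>0$ on $\Omega\cap C(X_0,t_0,r_0)$ together with the two-sided bounds $\gamma^{-1}\le|\nabla_X u|\le\gamma$ and $\gamma^{-1}\delta(Y,s)\le u(Y,s)\le\gamma\delta(Y,s)$; by the non-degeneracy and Schauder theory recalled in subsection~\ref{nondd}, $u$ is then a smooth classical solution of the $p$-parabolic equation for which \eqref{boundsaapa} and \eqref{boundsaapacor} hold with $\gamma$ replaced by a constant $\tilde\gamma=\tilde\gamma(n,p,b_1,\gamma)$, and in particular \eqref{strongND+ml} holds. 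Applying Theorem~\ref{thm1-a} and Corollary~\ref{thm1-b}, and then dividing out the (two-sided bounded) powers of $|\nabla_X u|$ and the factors $u$, $u^3$ via \eqref{strongND+ml}, I obtain a constant $c=c(n,b_1)$ so that
\begin{align}\label{eq:cms-Free}
\bigl(|\nabla_X^2 u|^2+|\partial_t u|^2\bigr)\,\delta(Y,s)\,\d Y\d s
\quad\text{and}\quad
\bigl(|\nabla_X^3 u|^2+|\partial_t\nabla_X u|^2\bigr)\,\delta^3(Y,s)\,\d Y\d s
\end{align}
are Carleson measures on $\Omega\cap C(X_0,t_0,r_0/c)$ with norms bounded in terms of $n$, $p$, $b_1$, $\gamma$; equivalently, \eqref{caraa} and \eqref{carab} hold.

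Next I would reduce \eqref{conc1} to a quantitative flatness statement. Since $\Sigma$ is already a Lip(1,1/2) graph, by \cite{HLN,HLN1} (and its graph specialization, cf.\ \cite{H,HL,LM,LS}) it suffices to verify, after possibly enlarging $c$, that $\Sigma\cap C(X_0,t_0,r_0/c)$ satisfies the bilateral weak geometric lemma with respect to \emph{time-independent} hyperplanes --- equivalently, the parabolic weak half-space approximation property of \cite{BHMN,BHMN1} in a Carleson sense: for a fixed $\varepsilon>0$, the dyadic surface cubes $Q\subset\Sigma$ for which no time-independent hyperplane through the center of $Q$ lies parabolically within $\varepsilon\diam(Q)$ of $\Sigma$ in the associated cylinder form a Carleson family, with packing constant depending only on $n$, $p$, $b_1$, $\gamma$. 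The two-sidedness of the approximation is automatic, since $\Sigma$ is a graph separating each cylinder into $\Omega$ and its complement and $u>0$ precisely on the $\Omega$-side.

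The core is the passage from \eqref{eq:cms-Free} to this packing bound, via rescaling and compactness. Fix $(X,t)\in\Sigma$ and $r>0$ with $C(X,t,2r)\subset C(X_0,t_0,r_0/c)$; rescale parabolically to unit size and normalize $u$ by its size, so that the rescaled solution $\tilde u$ satisfies $|\nabla_X\tilde u|\approx 1$ and $\tilde u\approx\delta$ on the unit cylinder. The equation is invariant under this rescaling, and by \eqref{strongND+} the linearized operator $\tilde{\mathcal H}$ obtained by differentiating it is uniformly parabolic with ellipticity and coefficient bounds depending only on $n$, $p$, $\gamma$, so $\tilde u$ and its spatial derivatives solve uniformly parabolic equations with the same control. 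The Carleson bounds \eqref{eq:cms-Free} say that, outside an exceptional family of $(X,t,r)$ of Carleson-small total mass, the averages of $\delta|\nabla_X^2\tilde u|$, $\delta|\partial_t\tilde u|$, $\delta^{3/2}|\nabla_X^3\tilde u|$ and $\delta^{3/2}|\partial_t\nabla_X\tilde u|$ are small in mean square on the unit cylinder; by interior parabolic regularity for $\tilde{\mathcal H}$ this upgrades to pointwise smallness of $\nabla_X^2\tilde u$, $\partial_t\tilde u$ and $\partial_t\nabla_X\tilde u$ on a slightly smaller cylinder, forcing $\tilde u$ to be $\varepsilon$-close in $C^1$ to a \emph{time-independent} affine function $\ell$ with $|\nabla\ell|\approx 1$ --- time-independence being exactly what the smallness of $\partial_t\tilde u$ and $\partial_t\nabla_X\tilde u$ provides. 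Since $\tilde u>0$ in $\Omega$, vanishes continuously on $\Sigma$, and obeys $\gamma^{-1}\delta\le\tilde u\le\gamma\delta$, the set $\Sigma=\{\tilde u=0\}$ is then trapped between two parallel translates of $\{\ell=0\}$ at distance $O(\varepsilon)$. Scaling back, this is the desired $\varepsilon r$-approximation of $\Sigma$ by the time-independent hyperplane $\{\ell=0\}$ in $C(X,t,r)$; summing the exceptional scales against the Carleson masses in \eqref{eq:cms-Free} yields the packing bound, and the characterization of the previous paragraph then gives \eqref{conc1}.

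I expect the main difficulty to be this last step: making the compactness argument quantitative while keeping uniform control, over all scales and locations, of how the Carleson mass of the analytic square functions in \eqref{eq:cms-Free} bounds the geometric packing constant, and above all ensuring that the approximating hyperplanes are genuinely time-independent --- which is precisely where the $\partial_t u$ estimate of Theorem~\ref{thm1-a} and the $\partial_t\nabla_X u$ estimate of Corollary~\ref{thm1-b} enter, and where the parabolic problem departs from the heat-equation case of \cite{LeNy}. The degeneracy and lack of homogeneity of the evolutionary $p$-Laplace equation, the main novelty here, are neutralized by the two-sided bound $\gamma^{-1}\le|\nabla_X u|\le\gamma$ in \eqref{boundsfree}, which turns $\mathcal H$ and $\tilde{\mathcal H}$ into bona fide uniformly parabolic operators on the relevant cylinder and thereby makes the standard interior estimates and the compactness available.
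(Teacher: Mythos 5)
Your opening step --- observing that a Lip(1,1/2) graph is parabolic ADR, upgrading \eqref{boundsfree} to the Schauder bounds \eqref{boundsaapa} and \eqref{boundsaapacor} via the non-degeneracy (this is exactly Lemma \ref{thm1-lem}), and then using $u\approx\delta$, $|\nabla_Xu|\approx 1$ to turn Theorem \ref{thm1-a} and Corollary \ref{thm1-b} into the Carleson measure bounds \eqref{caraa}--\eqref{carab} --- coincides with the paper's first step. The gap is in your geometric step. You reduce \eqref{conc1} to a bilateral weak geometric lemma: Carleson packing, for a fixed $\varepsilon$, of the scales and locations at which $\Sigma$ fails to be $\varepsilon\,\mathrm{diam}(Q)$-close to a time-independent plane, and you assert that this characterizes (local) parabolic uniform rectifiability. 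In the parabolic setting this is not a characterization, and the implication you need is false: Carleson packing of $\varepsilon$-flatness with respect to time-independent planes does not imply the $L^2$ Carleson condition on $\gamma_\Sigma$ of Definition \ref{def1.UR}, nor the BMO bound on $D^{1/2}_t\psi$ that \eqref{conc1} demands. The examples behind the remark following Definition \ref{goodgraph.def} (see \cite{LS}, \cite{KW}) include graphs of functions of $t$ alone with arbitrarily small Lip(1,1/2) constant which are \emph{not} regular Lip(1,1/2); for such a graph with constant below your fixed $\varepsilon$ the bad family in your packing condition is empty, yet the conclusion fails. (The weak half-space approximation property of \cite{BHMN1} is likewise an auxiliary step there, not an equivalent formulation of parabolic UR.) Consequently your rescaling/compactness argument, which only yields $\varepsilon$-flatness at good scales with an $\varepsilon$-dependent bad family, discards precisely the quantitative $L^2$ information that separates parabolic uniform rectifiability from mere flatness.

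The paper instead estimates the coefficients $\gamma_\Sigma(\tilde X,\tilde t,r)$ directly and quantitatively: for each boundary point and scale it produces, via the mean value theorem and \eqref{boundsfree}, a point $X^\ast$ at height $\approx r$ with $\langle\nabla_Xu(X^\ast,\tilde t),e_n\rangle\gtrsim 1$, takes as competitor the \emph{time-independent} plane $P=\{(Z,\tau):A(X^\ast,Z,\tilde t)=0\}$ given by the spatial linearization of $u$ at frozen time $\tilde t$, shows $\dist(Y,s,P)\sim|A(X^\ast,Y,\tilde t)|$ on $\Delta(\tilde X,\tilde t,r)$, and bounds $|A|$ on the boundary by Taylor's theorem (yielding $\delta^2|\nabla_X^2u|^2$) together with interior estimates for the time discrepancy $A(X^\ast,Y,s)-A(X^\ast,Y,\tilde t)$ (yielding $\delta^2|\partial_tu|^2+\delta^4|\nabla_X\partial_tu|^2$); integrating in $(\tilde X,\tilde t)$ and $r$ and applying Fubini converts the Carleson measures \eqref{caraa}--\eqref{carab} into $\nu(\Delta(X,t,R)\times(0,R))\lesssim R^{n+1}$, and Remark \ref{reggraph} (Theorem 11 of \cite{LeNy}) then upgrades this, for a Lip(1,1/2) graph, to the regular Lip(1,1/2) conclusion with $b_2=b_2(n,p,b_1,\gamma)$. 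To repair your argument you would have to replace the BWGL reduction by such a direct estimate of $\gamma_\Sigma$ (or of $\|D_t^{1/2}\psi\|_*$); the square function bounds you derived are strong enough for this, but the compactness route as written does not deliver it.
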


As a second application, we establish a Fatou type theorem for $\nabla_X u$. The non-tangential cone $\Gamma (X,t)$,  the (time-independent) unit vector $\mathbf{n}(X,t)\in \mathbb R^n$, and the surface measure $\sigma$, are defined in the bulk of the paper.

\begin{theorem}\label{Free+} Let $p$, $2<p<\infty$, be given. Let $\Omega\subset\mathbb R^{n+1}$ be a
(unbounded) regular Lip(1,1/2)   graph
domain,
\begin{eqnarray*}
\Omega=\Omega_\psi=\{(x,x_n,t)\in\mathbb R^{n-1}\times\mathbb R\times\mathbb R:x_n>\psi(x,t)\},
\end{eqnarray*}
with constants $(b_1,b_2)$  and with boundary $\Sigma$.  Let
$(X_0,t_0)\in \Sigma$, assume  that $u$ is a non-negative function in $\Omega\cap C(X_0,t_0,2r_0)$  which is a weak solution to $\partial_tu-\nabla_X\cdot(|\nabla_Xu|^{p-2}\nabla_Xu)=0$ in  $\Omega\cap C(X_0,t_0,2r_0)$.
 Assume that  there exist a constant $\gamma$,  $1\leq\gamma<\infty$, such that \eqref{boundsfree} holds for all $(Y,s)\in \Omega\cap C(X_0,t_0,r_0)$. Then,
\begin{align}\label{conc2}
\nabla_X u ( X,t):=\lim_{\substack{(Y,s)\in \Gamma(X,t)\\ (Y,s)\to (X,t)}} \nabla_X u ( Y,s )
 \end{align}
 exists for $\sigma$-a.e. $(X,t)\in\Sigma\cap C(X_0,t_0,r_0)$. Furthermore, assume in addition that
 \begin{align}\label{boundsfreere}\gamma^{-1}\leq\langle\nabla_X u(Y,s),e_n\rangle,\end{align}
for all $(Y,s)\in \Omega\cap C(X_0,t_0,r_0)$. Then there exists,  if we let $u\equiv 0$ on $(\mathbb R^{n+1}\setminus\Omega)\cap C(X_0,t_0,r_0)$, a locally finite measure $\mu$ supported on $\Sigma\cap C(X_0,t_0,r_0)$ such that
\begin{align} \label{conc3}
\iiint |\nabla_Xu|^{p-2}\nabla_Xu\cdot\nabla_X\phi -u \partial_t\phi\, \d X \d t =-\iint \phi \d \mu
\end{align}
for all $ \phi \in C_0^\infty(C(X_0,t_0,r_0))$. Furthermore, for  $\sigma$-a.e. $(X,t)\in\Sigma\cap C(X_0,t_0,r_0)$
\begin{align} \label{conc4}
\nabla_Xu(X,t)=|\nabla_Xu(X,t)|\mathbf{n}(X,t)\mbox{ and }\d \mu(X,t)=|\nabla_Xu(X,t)|^{p-1}\d\sigma(X,t).
\end{align}
\end{theorem}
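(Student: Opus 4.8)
The plan is to prove Theorem \ref{Free+} by combining the Carleson measure estimates of Theorem \ref{thm1-a} and Corollary \ref{thm1-b} with the non-tangential limit theory for solutions of linear parabolic equations in regular Lip(1,1/2) domains. First I would observe that the hypothesis \eqref{boundsfree}, together with the Schauder/interior estimates discussed in subsection \ref{nondd}, upgrades \eqref{boundsfree} to the full two-sided bounds \eqref{strongND+ml}, so that $v:=u_{x_k}$ solves the linear uniformly parabolic equation $\tilde{\mathcal H}v=0$ with coefficients $\tilde A$ satisfying ellipticity constants comparable to $1$, and moreover the Carleson measure estimates \eqref{caraa}--\eqref{carab} hold, i.e. $|\nabla_X\tilde A|^2\,\delta\,\d Y\d s$ and $|\partial_t\tilde A|^2\,\delta^3\,\d Y\d s$ are Carleson measures on $\Omega\cap C(X_0,t_0,r_0/c)$. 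Since a regular Lip(1,1/2) graph domain is (locally) a parabolic NTA-type domain and its boundary is parabolic uniformly rectifiable (equivalently, the graph is regular Lip(1,1/2)), the coefficient regularity encoded in \eqref{caraa}--\eqref{carab} places $\tilde{\mathcal H}$ in the class of operators for which the associated parabolic measure $\omega_{\tilde{\mathcal H}}$ is $A_\infty$ with respect to the surface measure $\sigma$; this is the content of the discussion in subsection \ref{pmeasure}. Consequently bounded $\tilde{\mathcal H}$-solutions have non-tangential limits $\sigma$-a.e., and since each $u_{x_k}$ is bounded on $\Omega\cap C(X_0,t_0,r_0)$ by \eqref{boundsaapa}, the limit in \eqref{conc2} exists for $\sigma$-a.e. $(X,t)\in\Sigma\cap C(X_0,t_0,r_0)$.

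For the second part, I would first extend $u$ by zero outside $\Omega$ and argue that the distribution $T(\phi):=\iiint |\nabla_Xu|^{p-2}\nabla_Xu\cdot\nabla_X\phi - u\,\partial_t\phi\,\d X\d t$ is a non-negative (hence represented by a measure) distribution supported on $\Sigma\cap C(X_0,t_0,r_0)$: support on $\Sigma$ is immediate because $u$ is a weak solution in $\Omega$ and is identically zero in the interior of the complement, while non-negativity (so that Riesz representation applies) follows from testing with non-negative $\phi$ and using that $u\ge 0$ vanishes continuously at $\Sigma$ — the boundary flux $|\nabla_Xu|^{p-2}\nabla_Xu$ points into $\Omega$. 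This yields \eqref{conc3} with a locally finite $\mu\ge 0$ on $\Sigma\cap C(X_0,t_0,r_0)$. To identify $\mu$, I would localize at a $\sigma$-a.e. boundary point $(X,t)$ where: (i) $\Sigma$ has a time-independent approximate tangent plane with normal $\mathbf n(X,t)$ (which exists $\sigma$-a.e. by parabolic uniform rectifiability / regular Lip(1,1/2) structure), and (ii) the non-tangential limit $\nabla_Xu(X,t)$ exists. A blow-up argument — rescaling $u$ by the parabolic dilations $(X,t)\mapsto(rX,r^2t)$ and the natural amplitude scaling dictated by $u(Y,s)\sim\delta(Y,s)$ — produces, in the limit, a function on a half-space that is affine in the normal direction and $t$-independent, forcing $\nabla_Xu(X,t)=|\nabla_Xu(X,t)|\mathbf n(X,t)$; feeding this into the weak formulation and computing the surface flux of the blow-up limit gives $\d\mu=|\nabla_Xu|^{p-1}\d\sigma$, which is \eqref{conc4}. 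The extra hypothesis \eqref{boundsfreere} guarantees the blow-up is non-degenerate (the normal component of $\nabla_Xu$ stays bounded below), so the limit profile is genuinely a non-constant affine function rather than collapsing.

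The main obstacle, as I see it, is the rigorous justification of the blow-up identification of $\mu$ in \eqref{conc4}. One must show that along a.e. sequence of parabolic rescalings the rescaled solutions converge (in a topology strong enough to pass to the limit in the nonlinear flux $|\nabla_Xu|^{p-2}\nabla_Xu$ and in the measure $\mu$) to a limit solving the $p$-parabolic equation in a half-space with the right homogeneity, and that the limit is $t$-independent and linear — this uses the Carleson estimate \eqref{caraml}/\eqref{apakka} to kill the oscillation of $\nabla_Xu$ in an $L^2$-averaged sense at most scales, the non-degeneracy \eqref{strongND+ml} for compactness of the gradients away from zero, and a Liouville-type rigidity for $p$-parabolic functions on a half-space that are comparable to the distance. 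Controlling the weak-$*$ convergence of the rescaled boundary measures to $|\nabla_Xu(X,t)|^{p-1}\d\sigma$ — rather than to some defect measure — is where the parabolic uniform rectifiability of $\Sigma$ (flatness of $\sigma$ at a.e. scale and point) and the Carleson control of the coefficients must be used in tandem; this is the delicate, technical heart of the argument, the rest being adaptations of now-standard elliptic/parabolic free-boundary blow-up technology.
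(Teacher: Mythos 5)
Your handling of \eqref{conc2} and \eqref{conc3} coincides with the paper's: each $u_{x_k}$ is a bounded solution of the linearized equation $\tilde{\mathcal H}u_{x_k}=0$, a Fatou theorem gives non-tangential limits $\omega$-a.e., and the Carleson estimates of Theorem \ref{thm1-a} and Corollary \ref{thm1-b} (through Theorem \ref{Ainfty}, using $\delta\lesssim u$ near $\Sigma$) transfer this to $\sigma$-a.e.; the measure $\mu$ is obtained exactly as you say, by Riesz representation of the non-negative distribution $T$. The issue is \eqref{conc4}.

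You propose to extract both identities in \eqref{conc4} from the blow-up alone, and you yourself isolate the identification of the weak-$\ast$ limit of the rescaled boundary measures (``rather than to some defect measure'') as the delicate heart, which you then defer to standard technology. That is a genuine gap, not a routine adaptation: a blow-up at $\sigma$-a.e.\ point only computes the density of the part of $\mu$ that is absolutely continuous with respect to $\sigma$; to conclude the identity of measures $\d\mu=|\nabla_Xu|^{p-1}\d\sigma$ you must in addition rule out a singular part of $\mu$ (for instance by first proving an upper bound of the form $\mu(\Delta(X,t,\rho))\lesssim \rho^{n+1}$, which gives $\mu\ll\sigma$), and nothing in your outline provides this. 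The paper closes precisely this step by a more elementary device that your plan is missing: by \eqref{boundsfree} and \eqref{boundsfreere} the level sets $\{u=\varepsilon\}$ are Lip(1,1/2) graphs $x_n=\psi_\varepsilon(x,t)$ with constants uniform in $\varepsilon$, on $\Omega_\varepsilon=\{x_n>\psi_\varepsilon\}$ the flux identity holds exactly with $\d\mu_\varepsilon=|\nabla_Xu|^{p-1}\d\sigma_\varepsilon$, and letting $\varepsilon\to0$ using \eqref{conc2} and dominated convergence yields $\d\mu=|\nabla_Xu|^{p-1}\d\sigma$ directly. This identity is then fed into the blow-up (as the $\sigma$-a.e.\ density of $\mu$, item (d) of \eqref{3.14}) to pin down the slope of the limit profile $\alpha x_n^+$ and deduce $\nabla_Xu=|\nabla_Xu|\mathbf{n}$. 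Your reversed order (direction first, via locally uniform convergence of the gradients of the blow-ups together with \eqref{conc2}, then the measure) is workable for the direction statement, but it does not by itself deliver the measure statement.

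Two further ingredients in your sketch need actual arguments rather than appeals to folklore. The rigidity of the blow-up limit (that it is $t$-independent and affine, hence $\beta x_n^+$) is obtained in the paper by Schwarz reflection across the flat boundary combined with the intrinsic gradient H\"older estimate of Theorem \ref{osci} applied on cylinders of radius $R\to\infty$, using the uniform Lipschitz bound and linear growth inherited from \eqref{boundsfree}; your ``Liouville-type rigidity'' should be replaced by, or reduced to, this argument. Likewise, the $\sigma$-a.e.\ existence of a time-independent tangent plane, which you attribute generically to parabolic uniform rectifiability, is in the paper the content of the Rademacher-type Theorem \ref{rademacher} for regular Lip(1,1/2) functions. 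On the positive side, your rescaling $r^{-1}u(rX,r^2t)$ does preserve the $p$-parabolic equation, so that part of your blow-up set-up is sound.
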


Concerning Theorem \ref{Free} and Theorem \ref{Free+}, it is naturally unclear in what applications \eqref{boundsfree} and \eqref{boundsfreere} can be verified. However, in the mathematical theory of free boundaries, conditions similar to one in \eqref{boundsfree} appear frequently as part of the analysis, see for example \cite{AW,ACS1,ACS2,CLW1,CLW2,CV,E,LeNy} for works devoted to the heat operator, and \cite{DH,TT} for some results concerning the evolutionary $p$-Laplace operator. \eqref{boundsfreere} implies that the level sets of the function $u$ close to the boundary are Lip(1,1/2)  graphs, at least locally in the given coordinate system.

\subsection{Organization of the paper} Section \ref{pre} is of preliminary type and we here introduce notation, weak solutions, parabolic Ahlfors-David regular sets and Whitney cubes.  In Section \ref{Proofmain} we prove Theorem \ref{thm1-a} and Corollary \ref{thm1-b}. In Section \ref{Upara} we introduce  parabolic uniform rectifiability, (regular) Lip(1,1/2) graphs and we state a Rademacher theorem for regular Lip(1,1/2) functions. In this section we also introduce parabolic measure for linear parabolic operators in divergence form, and we state the results we need concerning the $A_\infty$ property with respect to surface measure for parabolic measure. In Section \ref{Pest} we discuss estimates for the evolutionary $p$-Laplace equation. Section \ref{App} is devoted to the applications to  parabolic uniform rectifiability, boundary behaviour and Fatou type theorems for $\nabla_Xu$ mentioned above, and we here prove Theorem \ref{Free}  and Theorem \ref{Free+}. While we undoubtedly prove new results in this paper, it is also fair to say that paper also  has a bit of the character of a survey as we connect results and techniques from several fields of (parabolic) PDEs.


\section{Preliminaries}\label{pre}

\subsection{Notation}
 Points in  Euclidean space-time 
$ \mathbb R^{n+1} $ are denoted by $(X,t) = ( x_1,
 \dots,  x_n,t)$,  where $ X = ( x_1, \dots,
x_{n} ) \in \mathbb R^{n } $, $n\geq 1$, and $t$ represents the time-coordinate.  We let $ \d X $ denote  Lebesgue $ n $-measure on    $ \mathbb R^{n}$  and we let $ \d t $ denote  Lebesgue $1$-measure on    $ \mathbb R$. We let  $ \bar E, \ar E$,
  be the closure and boundary of the set $ E \subset
\mathbb R^{n+1}$. $  \lan \cdot ,  \cdot  \ran $  denotes  the standard inner
product on $ \mathbb R^{n} $ and we let  $  | X | = \lan X, X \ran^{1/2} $ be
the  Euclidean norm of $ X. $  We let $\|(X,t)\|:=|X|+|t|^{1/2}$ denote the parabolic length
of a space-time vector $(X,t)$. Given $(X,t), (Y,s)\in\mathbb R^{n+1}$ we let $$\dist(X,t,Y,s):=|X-Y|+|t-s|^{1/2},$$ and, more generally, we let
\begin{align}\label{dista}\dist(E_1,E_2) := \inf_{(X,t) \in E_1, (Y,s) \in E_2} \dist(X,t,Y,s),
\end{align}
denote the parabolic distance between  $E_1$ and $E_2$, where $E_1,E_2 \subseteq \mathbb R^{n+1}$.  $\dist( X,t, E ) $  is defined to equal the parabolic distance, defined with respect to $\dist(\cdot,\cdot)$, from
 $  (X,t) \in \mathbb R^{n+1} $ to $ E$. We let $\diam(E)$ denote the parabolic diameter of $E$, i.e. the diameter of $E$ as measured using the parabolic distance function. In addition, we let
\begin{align}\label{distb}H(E_1,E_2) := \max\bigl\{\sup_{(X,t) \in E_1} \dist(X,t,E_2),\sup_{(Y,s) \in E_2} \dist(Y,s,E_1)\bigr \},
\end{align}
 denote the parabolic Hausdorff distance between  $E_1$ and $E_2$, where $E_1,E_2 \subseteq \mathbb R^{n+1}$. Given $X\in\mathbb R^n$ we let $B(X,r)$ denote the open ball in $\mathbb R^n$, centered at $X$ and of radius $r$, and we recall that the parabolic cylinder $C(X,t,r) $ was introduced in
 \eqref{cyl}.

  \subsection{Functional setting} If $ U \subset \mathbb R^{n} $ is open and $ 1 \leq q \leq \infty, $ then by $ W^{1 ,q} ( U)$ we denote the space of equivalence classes of functions $ f $ with distributional gradient $ \nabla_Xf= ( f_{x_1}, \dots, f_{x_n} ), $ both of which are $ q $-th power integrable on $ U. $ Let \,
\[ \| f \|_{ W^{1,q} (U)} := \| f \|_{ L^q (U)} + \| \, | \nabla_X f | \, \|_{ L^q ( U)} \, \]
be the norm in $ W^{1,q} ( U ) $ where $ \| \cdot \|_{L^q ( U )} $ denotes the usual Lebesgue $q$-norm in $U$. $ C^\infty_0 (U )$ is the set of infinitely differentiable functions with compact support in $ U$ and we let $ W^{1 ,q}_0 ( U )$ denote the closure of $ C^\infty_0 (U )$ in the norm $\| \cdot\|_{ W^{1,q} (U)}$. $ W^{1,q}_{\rm loc} ( U ) $ is defined in the standard way. By $ \nabla_X \cdot $ we denote the divergence operator in $X$. Given $U\subset\mathbb R^n$, and $t_1<t_2$, we denote by $L^q(t_1,t_2,W^{1,q} ( U ))$ the space of functions such that for almost every $t$, $t_1\leq t\leq t_2$, the function $X\to u(X,t)$ belongs to $W^{1,q} ( U )$ and
\begin{equation*}
	\| u \|_{ L^q(t_1,t_2,W^{1,q} ( U ))}:=\bigl (\int\limits_{t_1}^{t_2}\iint\limits_U\bigl (|u(X,t)|^q+|\nabla_X u(X,t)|^q\bigr )\d X\d t\bigr )^{1/q} <\infty\,.
\end{equation*}
The spaces $L^q(t_1,t_2,W^{1,q}_0 ( U ))$ and $L^q_{\rm loc}(t_1,t_2,W^{1,q}_{\rm loc} ( U ))$ are defined analogously.

\subsection{Weak solutions} \label{sec:solution} Let $ \Omega \subset \mathbb R^{n+1} $ be an open set. Given $p$, $1<p<\infty$, we say that $ u $ is a weak solution to the equation in \eqref{basic eq} in $\Omega$, if the following is true whenever $U \subset \mathbb R^{n} $ is open,  $-\infty<t_1<t_2<\infty$,  and $\O:=U\times (t_1,t_2)\Subset\Omega$. First,  $u\in L^p_{\rm loc}(t_1,t_2,W^{1,p}_{\rm loc} ( U ))$, and second
\begin{equation}
	\label{1.1tr} \iiint |\nabla_Xu|^{p-2}\nabla_X u\cdot\nabla_X \phi -u\partial_t\phi\,  \d X\d t= 0,
\end{equation}
whenever $ \phi \in C_0^\infty(\O)$. If $ u $ is a weak solution to  \eqref{basic eq} in this sense, then we will  refer to $u$ as  a weak solution to the evolutionary $p$-Laplace equation, or the $p$-parabolic equation, and we will sometimes
refer to $u$ as being a $p$-parabolic function in $\Omega$. If \eqref{1.1tr} holds with $=$ replaced by $\geq$ ($\leq$) for all $ \phi \in C_0^\infty(\O)$, $\phi \geq 0$, then we will refer to $u$ as a weak supersolution (subsolution) to the $p$-parabolic equation. Note that for  $p \in (2,\infty)$ fixed,   by the regularity theory for weak solutions, see \cite{DB}, any $p$-parabolic function $u$ has a locally H{\"o}lder continuous (in space and time) representative.

 \subsection{{Parabolic} Hausdorff measure}
  Given $\eta \geq 0$, we let $\H^\eta$ denote
 standard $\eta$-dimensional Hausdorff measure.
  We also define a {parabolic} Hausdorff measure of {homogeneous}
  dimension $\eta$, denoted
  $\H_{\text p}^\eta$, in the same way that one defines standard Hausdorff measure, but instead using coverings
  by {parabolic} cubes. I.e., for $\delta>0$, and for $A\subset \mathbb R^{n+1}$, we set
  \[ \H_{\text{p},\delta}^\eta(A):= \inf \sum_k \diam(A_k)^\eta\,,
  \]
  where the infimum runs over all countable such coverings of $A$, $\{A_k\}_k$, with $\diam(A_k)\leq \delta$ for all $k$. We then define
  \[
  \H_{\text p}^\eta (A) := \lim_{\delta\to 0^+} \H_{\text{p},\delta}^\eta(A)\,.
  \]
As is the case for classical Hausdorff measure, $ \H_{\text{p}}^\eta$ is a Borel regular measure.
We refer the reader to \cite[Chapter 2]{EG} for a discussion of the basic properties of standard
Hausdorff measure.  The arguments in \cite{EG} adapt readily to treat $  \H_{\text{p}}^\eta$.
In particular, one obtains a measure equivalent to   $\H_{\text{p}}^\eta$ if one defines
$\H_{\text{p},\delta}^\eta$ in terms of coverings by arbitrary sets of parabolic diameter at most $\delta$, rather than
cubes.  As in the classical setting, we define the parabolic homogeneous dimension of a set
$A\subset \mathbb R^{n+1}$ by
\[\H_{\text{p}, \text{dim}}(A):= \inf\left\{ 0\leq \eta<\infty \,| \,\H_{\text p}^\eta(A)=0\right\}.
\]
We observe that $\H_{\text{p}, \text{dim}}(\ree)=n+2$.

\subsection{Surface measure}
Given a closed set $\Sigma \subset \mathbb R^{n+1}$
  of  homogeneous dimension $\H_{\text{p}, \text{dim}}(\Sigma)=n+1$, we
define  a surface measure on $\Sigma$ as the restriction of $\H_{\text{p}}^{n+1}$ to $\Sigma$, i.e.,
\begin{equation}\label{sigdef}
\sigma = \sigma_\Sigma:=  \H_{\text{p}}^{n+1}|_\Sigma\,.
\end{equation}
For  $ (X, t ) \in \Sigma $ and $r>0$, we let
$$\Delta(X,t,r):=\Sigma\cap C(X,t,r)\,.$$
The extremal time coordinates of $\Sigma$ will be denoted by $T_0=\inf\{t:\exists (X,t)\in\Sigma\}$ and $T_1=\sup\{t:\exists (X,t)\in\Sigma\}$. Throughout the rest of the paper we will,  for simplicity, consistently assume that
 \begin{align}\label{Assump}
 &\mbox{$\diam\Sigma=\infty$ and that $T_0=-\infty$ and $T_1=\infty$.}
 \end{align}

\subsection{Parabolic Ahlfors-David regular sets}

\begin{definition}\label{def1.ADR}{\bf (Parabolic Ahlfors-David Regularity).} Let
$\Sigma \subset \mathbb R^{n+1}$ be a closed set.  We say that $\Sigma$ is {parabolic Ahlfors-David regular}, parabolic ADR for short
(or simply p-ADR, or just ADR)
with constant $M\geq 1$,  if
\begin{equation} \label{eq1.ADRha}
M^{-1}\, r^{n+1} \leq \sigma(\Delta(X,t,r)) \leq M\, r^{n+1},\end{equation}
whenever $0<r<\infty$,  $(X,t)\in \Sigma$.
\end{definition}

\subsection{Whitney cubes} Let
$\Sigma \subset \mathbb R^{n+1}$ be a closed set and let $\Omega:=\mathbb R^{n+1}\setminus\Sigma$. We let $\mathcal{W}=\W(\Omega)$ denote a collection $\{I\}$ of (closed) dyadic parabolic Whitney cubes of $\Omega$, constructed so that the cubes in $\mathcal{W}$
form a covering of $\Omega$ with non-overlapping interiors, such that
\begin{equation}\label{eqWh1} 4\, {\rm{diam}}\,(I)\leq \dist(4 I,\Sigma) \leq  \dist(I,\Sigma) \leq 40 \, {\rm{diam}}\,(I),\end{equation}
and
\begin{equation}\label{eqWh2}\diam(I_1)\sim \diam(I_2), \mbox{ whenever $I_1$ and $I_2$ touch.}
\end{equation}
Given a small, positive parameter $\tau$, and given $I\in\W$, we let
\begin{equation}\label{eq2.3*}I^* =I^*(\tau) := (1+2\tau)I,
\end{equation}
denote corresponding fattened Whitney cubes.
We  fix $\tau$ sufficiently small so that the cubes $\{I^{*}\}$ retain the properties of the Whitney cubes and in particular that
\begin{equation}\label{eq2.3*g}\diam(I) \sim \diam(I^{*}) \sim \dist(I^{*},\Sigma) \sim \dist(I,\Sigma),
\end{equation}
where, strictly speaking, the implicit constants now also depend on $\tau$.

\subsection{Whitney regions and integration} Let $\Omega=\mathbb R^{n+1}\setminus\Sigma$ and consider the collection
of (closed) dyadic Whitney cubes $\mathcal{W}=\W(\Omega)$ introduced above, see \eqref{eqWh1} and \eqref{eqWh2}. Assuming that $\Sigma=\pom$ is parabolic ADR, $(X_0,t_0)\in \Sigma$, $r_0>0$, we, to this end, fix $c=c(n,M)\geq 1$ large enough to ensure that if $I\in \mathcal{W}$, and if
$I\cap C(X,t,r)\neq\emptyset$ for some $(X,t)\in\Sigma$, $r>0$, such that $C(X,t,r)\subset C(X_0,t_0,r_0)$, then $I^\ast\subset C(X_0,t_r,r_0/8)$. With $c$ fixed we introduce, for $(X,t)\in\Sigma$ and $r>0$ such that $C(X,t,r)\subset C(X_0,t_0,r_0)$,
\begin{align}\label{doma-}
\W(X,t,r)&:=\{I:\ I\cap C(X,t,r)\neq\emptyset\},
\end{align}
and
\begin{align}\label{doma}
\Omega(X,t,r)&:=\bigcup_{I:\ I\in \W(X,t,r)}I,\quad \Omega^\ast(X,t,r):=\bigcup_{I:\ I\in \W(X,t,r)}I^\ast.
\end{align}
 Given a large enough integer $N$, we also introduce
 \begin{align}\label{doma+-}
\W_N(X,t,r)&:=\{I:\ I\in \W(X,t,r),\ \ell(I)\geq 2^{-N}r\},
\end{align}
and
 \begin{align}\label{doma+}
\Omega_N(X,t,r)&:=\bigcup_{I:\ I\in \W_N(X,t,r)}I,\quad \Omega_N^\ast(X,t,r):=\bigcup_{I:\ I\in \W_N(X,t,r)}I^\ast.
\end{align}
Then clearly
\begin{align}\label{doma++}
\Omega(X,t,r)\subset \Omega^\ast(X,t,r),\ \Omega_N(X,t,r)\subset \Omega_N^\ast(X,t,r),
\end{align}
and
\begin{align}\label{doma++a}
 \Omega_N(X,t,r)\subset \Omega_{N'}(X,t,r),\ \Omega_N^\ast(X,t,r)\subset \Omega_{N'}^\ast(X,t,r),
\end{align}
whenever $N\leq N'$. Furthermore, this and the monotone convergence theorem imply that
\begin{equation}
\iiint_{\Omega(X,t,r)} G(Y,s)\,\d Y\d s
=\lim_{N\to\infty} \iiint_{\Omega_N(X,t,r)} G(Y,s)\,\d Y\d s,
\label{eq:Omega_N-TCM}
\end{equation}
for any integrable, non-negative function $G$ on $\Omega(X,t,r)$. The conclusion remain valid with $\Omega_N(X,t,r)$, $\Omega(X,t,r)$, replaced by
$\Omega_N^\ast(X,t,r)$, $\Omega^\ast(X,t,r)$. Finally, we introduce
 \begin{align}\label{doma+-ha}
\Gamma(X,t,r)&:=\{I\in \W(X,t,r):\ I^\ast\cap (\Omega\setminus C(X,t,r))\neq\emptyset\},\notag\\
\Gamma_N(X,t,r)&:=\{I\in\W(X,t,r):\ \exists I'\in\W\mbox{ such that }I^\ast\cap I'\neq\emptyset,\  \ell(I')\leq 2^{-N}r\},
\end{align}
and
\begin{align}\label{doma+-ha+}
I_N(X,t,r):=\W(X,t,r)\setminus \Gamma(X,t,r)\setminus \Gamma_N(X,t,r).
\end{align}

\begin{lemma}\label{lemma:approx-saw} Consider $(X,t)\in\Sigma$, $r>0$, let $N\gg 1$ and let  $\Omega_N:=\Omega_N(X,t,r)$,  $\Omega_N^\ast:=\Omega_N^\ast(X,t,r)$,  and $I_N:=I_N(X,t,r)$ be defined as above. Then there exists
$\Psi_N\in C_0^\infty(\mathbb R^{n+1})$ such that
\begin{align}\label{Coral}
\quad 1_{\Omega_N}\lesssim \Psi_N\le 1_{\Omega_N^\ast},
\end{align}
and such that
\begin{align}\label{Corb}
\quad |\nabla_X\Psi_N(Y,s)|\delta(Y,s)+|\partial_t\Psi_N(Y,s)|\delta^2(Y,s)\lesssim 1,
\end{align}
for all $(Y,s)\in\mathbb R^{n+1}\setminus\Sigma$. Furthermore,
\begin{equation}
|\nabla_X\Psi_N(Y,s)|+|\partial_t\Psi_N(Y,s)|\equiv 0\mbox{ whenever }(Y,s)\in\bigcup_{I\in I_N}I^{*}.
\label{eq:fregtgtr}
\end{equation}
\end{lemma}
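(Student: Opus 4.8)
The plan is to construct $\Psi_N$ as a smooth partition-of-unity-type function adapted to the Whitney decomposition. The natural approach is the standard one for building a smooth cutoff equal to $1$ on a union of Whitney cubes and supported on the corresponding fattened cubes. First I would fix, for each $I\in\W$, a smooth bump function $\phi_I\in C_0^\infty(I^*)$ with $0\le\phi_I\le 1$, $\phi_I\equiv 1$ on $(1+\tau)I$, and with derivative bounds $|\nabla_X\phi_I|\lesssim \ell(I)^{-1}$, $|\partial_t\phi_I|\lesssim \ell(I)^{-2}$; this is possible by the usual scaling of a fixed reference bump, and the implicit constants depend only on $n$ and $\tau$. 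Since the fattened cubes $\{I^*\}_{I\in\W}$ have bounded overlap (a consequence of \eqref{eqWh1}, \eqref{eqWh2} and the choice of $\tau$ small, i.e.\ \eqref{eq2.3*g}), the sum $\Phi:=\sum_{I\in\W}\phi_I$ satisfies $1\lesssim\Phi\lesssim 1$ on all of $\Omega$, and one sets $\psi_I:=\phi_I/\Phi$, obtaining a genuine partition of unity subordinate to $\{I^*\}$ with the same scaling of derivatives (here one uses that on $\supp\phi_I$ only boundedly many $\phi_{I'}$ are nonzero, and each has comparable sidelength by \eqref{eqWh2}, so $|\nabla_X\Phi|\lesssim\ell(I)^{-1}$, $|\partial_t\Phi|\lesssim\ell(I)^{-2}$ there, and $\Phi$ is bounded below).

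Next I would define
\begin{equation*}
\Psi_N:=\sum_{I\in\W_N(X,t,r)}\psi_I.
\end{equation*}
The upper bound $\Psi_N\le 1$ is immediate since the $\psi_I$ form a partition of unity and we are summing a subcollection, and $\supp\Psi_N\subset\bigcup_{I\in\W_N(X,t,r)}I^*=\Omega_N^\ast(X,t,r)$, giving the upper inequality in \eqref{Coral}. For the lower bound $1_{\Omega_N}\lesssim\Psi_N$: if $(Y,s)\in\Omega_N(X,t,r)$ then $(Y,s)\in I_0$ for some $I_0\in\W_N(X,t,r)$; any cube $I'$ with $(Y,s)\in I'^*$ touches $I_0$ (since $\tau$ is small), hence $\ell(I')\sim\ell(I_0)\ge 2^{-N}r$ up to a fixed constant, and $I'$ meets $C(X,t,r)$, so $I'\in\W_N(X,t,r)$ possibly after slightly adjusting $N$ — more cleanly, one notes $\psi_{I_0}(Y,s)$ need not be positive, so instead observe that $\sum_{I'\ni(Y,s)}\psi_{I'}(Y,s)=1$ and every such $I'$ lies in the slightly enlarged collection; the honest fix is to build $\Psi_N$ from $\W_{N+K}$ for a fixed $K=K(\tau)$ so that all neighbours of a cube in $\W_N$ lie in $\W_{N+K}$, which changes nothing in the statement since $N\gg1$ is arbitrary. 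Either way one gets $\Psi_N\ge c>0$ on $\Omega_N(X,t,r)$. The derivative bound \eqref{Corb} follows cube-by-cube: at $(Y,s)$ in the interior of $I'^*$ we have $\delta(Y,s)\sim\ell(I')$ by \eqref{eq2.3*g}, only boundedly many terms contribute, and $|\nabla_X\psi_I|\lesssim\ell(I)^{-1}\sim\delta(Y,s)^{-1}$, $|\partial_t\psi_I|\lesssim\ell(I)^{-2}\sim\delta(Y,s)^{-2}$, which is exactly \eqref{Corb}.

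Finally, \eqref{eq:fregtgtr}: on $\bigcup_{I\in I_N}I^*$ the gradient and time derivative of $\Psi_N$ vanish because such a cube $I$ is, by \eqref{doma+-ha+}, neither in $\Gamma(X,t,r)$ nor in $\Gamma_N(X,t,r)$, meaning $I^*$ meets only Whitney cubes $I'$ that stay inside $C(X,t,r)$ and have $\ell(I')>2^{-N}r$, i.e.\ all neighbours of $I$ lie in $\W_N(X,t,r)$; hence on $I^*$ the function $\Psi_N$ coincides with $\sum_{I'\ni(Y,s)}\psi_{I'}\equiv 1$, a constant, so its derivatives are zero there. I expect the only real subtlety to be bookkeeping with the parameter $N$ — ensuring that passing to neighbours of cubes in $\W_N$ does not leak out of the collection used to define $\Psi_N$ — which is handled transparently by enlarging $N$ by a fixed $\tau$-dependent constant, harmless since the lemma quantifies over $N\gg1$. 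Everything else is the routine Whitney partition-of-unity construction together with the comparability $\delta\sim\ell(I)$ on $I^*$.
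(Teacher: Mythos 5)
Your construction is essentially the paper's own: scale a fixed bump to each Whitney cube, normalize by the (boundedly overlapping, bounded above and below) sum to get a partition of unity $\{\psi_I\}$ adapted to the fattened cubes, set $\Psi_N=\sum_{I\in\W_N(X,t,r)}\psi_I$, get \eqref{Corb} from $\delta\sim\ell(I)$ on the fattened cubes, and get \eqref{eq:fregtgtr} by observing that for $I\in I_N(X,t,r)$ every cube whose bump is nonzero somewhere on $I^{*}$ touches $I$, hence meets $C(X,t,r)$ (through $I^{*}\subset C(X,t,r)$) and has sidelength $>2^{-N}r$, so $\Psi_N\equiv 1$ on $I^{*}$. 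The only wobble is your treatment of the lower bound in \eqref{Coral}: your worry that ``$\psi_{I_0}(Y,s)$ need not be positive'' is unfounded for your own construction, since $\phi_{I_0}\equiv 1$ on $(1+\tau)I_0\supset I_0$ and the denominator $\Phi$ is bounded above, so $\psi_{I_0}\geq c(\tau,n)>0$ on $I_0$ and $\Psi_N\geq\psi_{I_0}\geq c$ on $\Omega_N$ directly; this single-cube argument is exactly how the paper proves the lower bound ($\Psi_N\ge\eta_I\ge c_\tau^{-1}$). Consequently the ``fix'' of summing over $\W_{N+K}$ should be dropped: it is unnecessary, and taken literally it would only give $\supp\Psi_N\subset\Omega_{N+K}^{\ast}$, i.e.\ a relabeled variant of \eqref{Coral} rather than the statement as written (it is also based on the false premise that all neighbours of a cube in $\W_N(X,t,r)$ meet $C(X,t,r)$, which can fail near $\partial C(X,t,r)$ — precisely why the lower bound should be run on the single cube containing $(Y,s)$, not on the full partition sum). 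With that sentence removed, your argument is correct and coincides with the paper's proof.
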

\begin{proof} Given $I$, any closed dyadic parabolic cube in $\mathbb R^{n+1}$, we have $I^{*}=(1+2\tau)I$. We here also introduce $\tilde I=(1+\tau)I$ so that
\begin{equation}
I
\subsetneq
\interior(\tilde I)
\subsetneq \tilde I
\subset
\interior(I^{*}).
\label{eq:56y6y6}
\end{equation}
Given $I_0:=[-1/2,1/2]^{n}\times [-1/4,1/4]\subset\mathbb R^{n+1}$, fix $\phi_0\in C_0^\infty(\mathbb R^{n+1})$ such that
 $1_{I_0}\le \phi_0\le 1_{\tilde I_0}$ and $|\nabla_X \phi_0|+|\partial_t \phi_0|\lesssim 1$, here the implicit constant will also depend on the parameter $\tau$. For every $I\in \W=\W(\Omega)$, we set $$\phi_I(Y,s)=\phi_0((Y-X(I))/\ell(I),(s-t(I))/\ell^2(I)),$$
 where $(X(I),t(I))$ denotes the center of $I$. Then $\phi_I\in C^\infty(\mathbb R^{n+1})$, $1_{I}\le \phi_I\le 1_{\tilde I}$ and
  $$\ell(I)|\nabla_X \phi_I|+\ell^2(I)|\partial_t \phi_I|\lesssim 1.$$ To start the construction of $\Psi_N$ leading to \eqref{Coral}, we let, for every $(Y,s)\in\Omega=\mathbb R^{n+1}\setminus\Sigma$,  $$\phi(Y,s):=\sum_{I\in \W} \phi_I(Y,s).$$ It then follows that $\phi\in C_{\rm loc}^\infty(\Omega)$ since for every compact subset of $\Omega$ the previous sum has finitely many non-vanishing terms. Also, $1\le \phi(Y,s)\lesssim c_{\tau}$ for every $(Y,s)\in \Omega$ since the family $\{\tilde I\}_{I\in \W}$ has bounded overlap by our choice of $\tau$. Hence, letting $\eta_I:=\phi_I/\phi$ we see that $\eta_I\in C_0^\infty(\mathbb R^{n+1})$, $c_\tau^{-1}1_{I}\le \eta_I\le 1_{\tilde I}$, and that $$\ell(I)|\nabla_X \eta_I|+\ell^2(I)|\partial_t \eta_I|\lesssim 1.$$ Using this, and recalling the definition of $\W_N=\W_N(X,t,r)$ in \eqref{doma+-}, we set
\begin{align}\label{cuttoff}
\Psi_N(Y,s):=\sum_{I\in \W_N} \eta_I(Y,s)=
\frac{\sum\limits_{I\in \W_N} \phi_I(Y,s)}{\sum\limits_{I\in \W} \phi_I(Y,s)},
\end{align}
for all $(Y,s)\in\Omega$. Note that the number of terms in the sum defining $\Psi_N$ is bounded depending on $N$. This and the fact that each $\eta_I\in C_0^\infty(\mathbb R^{n+1})$ yield that $\Psi_N\in C_0^\infty(\mathbb R^{n+1})$. By construction
\begin{align*}
\supp \Psi_N
\subset \bigcup_{I\in \W_N} \tilde I=
\bigcup_{I\in \W_N(X,t,r)} \tilde I\subset
\Omega_N^\ast(X,t,r)=\Omega_N^\ast.
\end{align*}
This, the fact that $\W_N\subset \W$ and the definition of $\Psi_N$ immediately gives that
$\Psi_N\le 1_{\Omega_N^\ast(X,t,r)}$. On the other hand if $(Y,s)\in \Omega_N(X,t,r)$ then the exists $I\in \W_N$ such that $(Y,s)\in I$ in which case $\Psi_N(Y,s)\ge \eta_I(Y,s)\ge c_\tau^{-1}$. This completes the proof of \eqref{Coral}. To prove \eqref{Corb} we note that for every $(Y,s)\in \Omega$,
$$
|\nabla_X \Psi_N(Y,s)|
\le
\sum_{I\in \W_N} |\nabla_X\eta_I(Y,s)|
\lesssim
\sum_{I\in \W} \ell(I)^{-1}\,1_{\tilde I}(Y,s)
\lesssim
\delta(Y,s)^{-1},
$$
where we have used that if $(Y,s)\in \tilde I$, then $\delta(Y,s)\approx \ell(I)$,  and the fact that the family $\{\tilde I\}_{I\in \W}$ has bounded overlap. The estimate for $|\partial_t \Psi_N(Y,s)|$ proceeds analogously. To prove \eqref{eq:fregtgtr}, we fix $I\in I_N(X,t,r)$ and  $(Y,s)\in I^{*}$, and we set $\W_{Y,s}:=\{J\in \W: \phi_J(Y,s)\neq 0\}$. We first note that  $\W_{Y,s}\subset \W_N$. Indeed, if $\phi_J(Y,s)\neq 0$ then $(X,t)\in \widetilde{J^{*}}$.
Hence $(Y,s)\in I^{*}\cap J^{*}$ and our choice of $\tau$ gives that $\partial I$ meets $\partial J$, this in turn implies that $J\in \W_N(X,t,r)$ since $I\in I_N(X,t,r)$. All this yields
$$
\Psi_N(Y,s)
=
\frac{\sum\limits_{J\in \W_N(X,t,r)} \phi_J(Y,s)}{\sum\limits_{J\in \W} \phi_J(Y,s)}
=
\frac{\sum\limits_{J\in \W_N(X,t,r)\cap \W_{Y,s}} \phi_J(Y,s)}{\sum\limits_{J\in \W(X,t,r)\cap \W_{Y,s}} \phi_J(Y,s)}
=
\frac{\sum\limits_{J\in \W_N(X,t,r)\cap \W_{Y,s}} \phi_J(Y,s)}{\sum\limits_{J\in \W_N(X,t,r)\cap \W_{Y,s}} \phi_J(Y,s)}
=
1.
$$
Hence $\Psi_N\big|_{I^{*}}\equiv 1$  for every $I\in I_N(X,t,r)$. This and the fact that $\Psi_N\in C_0^\infty(\mathbb R^{n+1})$ immediately give that $|\nabla_X\Psi_N|+|\partial_t\Psi_N|\equiv 0$ in $\bigcup_{I\in I_N(X,t,r) }I^{*}$. This completes the proof of the lemma.\end{proof}

\begin{lemma}\label{partialADR}  Consider $(X,t)\in\Sigma$, $r>0$, let $N\gg 1$ and let $\Omega_N^\ast=\Omega_N^\ast(X,t,r)$ be defined as above.
 Let $\Psi=\Psi_N\in C_0^\infty(\mathbb R^{n+1})$ be as in Lemma \ref{lemma:approx-saw}. Then
\begin{align*}
\iiint_{\Omega_N^\ast(X,t,r)} (|\nabla_X\Psi_N(Y,s)|+\delta(Y,s)|\partial_t\Psi_N(Y,s)|)\, \d Y\d s\lesssim r^{n+1},\end{align*}
with implicit constant depending on the admissible constants of the construction, but not on $N$.
\end{lemma}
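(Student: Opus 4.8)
The plan is to reduce the integral to a sum of $\ell(I)^{n+1}$ over those Whitney cubes $I$ on which $\Psi_N$ is not locally constant, and then to bound that sum using the parabolic ADR property \eqref{eq1.ADRha} of $\Sigma$ together with the geometry of the cylinder $C(X,t,r)$.

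First I would localize the derivatives of $\Psi_N$. By \eqref{Corb}, $|\nabla_X\Psi_N(Y,s)|+\delta(Y,s)|\partial_t\Psi_N(Y,s)|\lesssim\delta(Y,s)^{-1}$ on $\mathbb R^{n+1}\setminus\Sigma$, while by \eqref{eq:fregtgtr} this quantity vanishes on $\bigcup_{I\in I_N(X,t,r)}I^{*}$. Since $\supp\Psi_N\subset\Omega_N^\ast=\bigcup_{I\in\W_N(X,t,r)}I^{*}$ and, by \eqref{doma+-ha}--\eqref{doma+-ha+}, $\W_N(X,t,r)\subset\W(X,t,r)=I_N(X,t,r)\cup\Gamma(X,t,r)\cup\Gamma_N(X,t,r)$, the derivatives of $\Psi_N$ are supported in $\bigcup_{I\in\W_N\cap(\Gamma\cup\Gamma_N)}I^{*}$. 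Using the bounded overlap of the fattened family $\{I^{*}\}$ and the facts that $\delta\sim\ell(I)$ and $|I^{*}|\sim\ell(I)^{n+2}$ on $I^{*}$ (see \eqref{eq2.3*g}), this reduces the lemma to
$$\sum_{I\in\Gamma(X,t,r)}\ell(I)^{n+1}\;+\;\sum_{I\in\W_N(X,t,r)\cap\Gamma_N(X,t,r)}\ell(I)^{n+1}\;\lesssim\;r^{n+1},$$
with the implicit constant independent of $N$. Note that every cube here satisfies $\ell(I)\lesssim r$ and $I\subset C(X,t,Cr)$, so its nearest point on $\Sigma$ lies in $\Delta(X,t,Cr)$.

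The second sum is straightforward: if $I\in\Gamma_N(X,t,r)$ then by \eqref{eqWh2} (and the smallness of $\tau$) $\ell(I)\sim\ell(I')\le 2^{-N}r$ for some $I'\in\W$, so together with $I\in\W_N$ all these cubes sit at the single scale $\ell(I)\sim 2^{-N}r$; their nearest points on $\Sigma$ are $\sim 2^{-N}r$-separated with bounded multiplicity inside $\Delta(X,t,Cr)$, so \eqref{eq1.ADRha} gives at most $\lesssim 2^{N(n+1)}$ of them, whence the sum is $\lesssim 2^{N(n+1)}(2^{-N}r)^{n+1}=r^{n+1}$, uniformly in $N$. For the first sum, each $I\in\Gamma(X,t,r)$ has $I^{*}$ straddling $\partial C(X,t,r)$, so I can pick $p_I\in\partial C(X,t,r)$ lying on a segment inside $I^{*}$ joining a point of $I$ to a point outside $C(X,t,r)$; then $\dist(p_I,I)\le\diam(I^{*})\lesssim\ell(I)$, and the Whitney separation \eqref{eqWh1} gives $\dist(p_I,\Sigma)\ge 4\diam(I)-\diam(I^{*})\ge(3-2\tau)\ell(I)>\ell(I)$ as well as $\dist(p_I,\Sigma)\lesssim\ell(I)$. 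I would split $\Gamma(X,t,r)$ according to whether $p_I$ lies within $2\ell(I)$ of the caps $B(X,r)\times\{t\pm r^2\}$ of $C(X,t,r)$ or not. For the ``cap'' cubes, a cube of size $\rho$ has its nearest point on $\Sigma$ inside $\Sigma\cap\bigl(B(X,Cr)\times\{|s-t\mp r^2|\le C\rho^2\}\bigr)$, which by \eqref{eq1.ADRha} (cover $B(X,Cr)$ by $\sim(r/\rho)^n$ parabolic cubes of side $\rho$) has surface measure $\lesssim r^n\rho$; hence there are $\lesssim(r/\rho)^n$ such cubes, contributing $\lesssim(r/\rho)^n\rho^{n+1}=r^n\rho$, and summing this geometric series over dyadic $\rho\le r$ yields $\lesssim r^{n+1}$. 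For the ``lateral'' cubes, let $F_I$ be the surface ball on $\partial C(X,t,r)$ centered at $p_I$ of radius $\ell(I)$; since $p_I$ is at distance $\ge 2\ell(I)$ from the caps, $F_I$ is an honest surface ball on the lateral part $\partial B(X,r)\times(t-r^2,t+r^2)$, so $\sigma_{\partial C}(F_I)\gtrsim\ell(I)^{n+1}$, where $\sigma_{\partial C}$ denotes parabolic $(n+1)$-dimensional Hausdorff measure on $\partial C(X,t,r)$. If $F_I\cap F_{I'}\ne\emptyset$, then $|\dist(p_I,\Sigma)-\dist(p_{I'},\Sigma)|\le\dist(p_I,p_{I'})\le\ell(I)+\ell(I')$, and combined with $(3-2\tau)\ell(I)\le\dist(p_I,\Sigma)\lesssim\ell(I)$ (and likewise for $I'$, with $3-2\tau>1$) this forces $\ell(I)\sim\ell(I')$; bounded overlap at a fixed scale then shows $\{F_I\}$ has bounded overlap. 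Since $\bigcup_I F_I$ lies in the lateral part of $\partial C(X,t,r)$, which has $\sigma_{\partial C}$-measure $\sim r^{n+1}$, we get $\sum_I\ell(I)^{n+1}\lesssim\sum_I\sigma_{\partial C}(F_I)\lesssim r^{n+1}$.

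Combining the three bounds proves the lemma with a constant depending only on $n$, $M$ and $\tau$, hence on the admissible constants of the construction but not on $N$. I expect the main obstacle to be the $\Gamma(X,t,r)$-sum: unlike $\Gamma_N$, these cubes occur at all dyadic scales (accumulating wherever $\partial C(X,t,r)$ comes close to $\Sigma$), so the sum cannot be estimated scale by scale against a single ADR ball, and its convergence is recovered only by exploiting the Whitney separation $\dist(I,\Sigma)\sim\diam I$, which simultaneously forces the shadows $F_I$ on $\partial C(X,t,r)$ to overlap only among comparable scales and produces the geometric gain $r^n\rho$ in the scale near the caps.
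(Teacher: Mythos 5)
Your proposal is correct and follows essentially the same route as the paper: reduce via the cutoff estimates to $\sum_{I\in\Gamma\cup\Gamma_N}\ell(I)^{n+1}\lesssim r^{n+1}$, control the $\Gamma_N$-cubes by an ADR packing argument at the single scale $2^{-N}r$, and control the $\Gamma$-cubes by splitting $\partial C(X,t,r)$ into lateral and top/bottom parts and comparing, with bounded overlap, to the parabolic Hausdorff (resp.\ Lebesgue) measure of those pieces. The only differences are cosmetic: you sum a geometric series over scales for the cap cubes where the paper uses $\ell(I)^{n+1}\leq r\,\ell(I)^{n}$ directly, and you justify the bounded overlap of the lateral ``shadows'' explicitly where the paper simply asserts it.
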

\begin{proof} Consider $(X,t)\in\Sigma$, $r>0$, let $N\gg 1$ and let  $\Gamma:=\Gamma(X,t,r)$, $\Gamma_N:=\Gamma_N(X,t,r)$, and $I_N:=I_N(X,t,r)$ be defined as above. Using Lemma \ref{lemma:approx-saw} it follows that
\begin{align*}
\iiint_{\Omega_N^\ast(X,t,r)} (|\nabla_X\Psi_N(Y,s)|+\delta(Y,s)|\partial_t\Psi_N(Y,s)|)\, \d Y\d s\lesssim \sum_{I\in \Gamma\cup\Gamma_N}\ell(I)^{n+1}.\end{align*}
Hence, to prove the lemma is suffices to prove that
\begin{equation}
\sum_{I\in \Gamma\cup\Gamma_N}\ell(I)^{n+1}\lesssim r^{n+1},
\label{eq:fregtgtr+}
\end{equation}
with implicit constant depending on the allowable parameters but being uniform in $N$. To estimate the contribution to sum in \eqref{eq:fregtgtr+} coming from $\Gamma_N$, we first note that if $I\in \Gamma_N(X,t,r)$, then $\ell(I)\approx 2^{-N}r$ and there exists $(Z_I,\tau_I)\in \Sigma$ such that
\begin{align}\label{bra}
\dist(Z_I,\tau_I,I)\sim \ell(I)\sim \delta(Z_I,\tau_I).
\end{align}
Using $( Z_I, \tau_I)\in\Sigma$ we let $\Delta_I:=\Sigma\cap C( Z_I, \tau_I,\delta(Z_I,\tau_I))$, which is a surface ball/cube on $\Sigma$ centered at $( Z_I, \tau_I)\in \Sigma$. Since $\Sigma$ is parabolic ADR  it follows that
\begin{equation}
\sum_{I\in \Gamma_N(X,t,r)}\ell(I)^{n+1}\lesssim \sum_{I\in \Gamma_N(X,t,r)}\sigma(\Delta_I).
\label{eq:fregtgtr+blla+}
\end{equation}
We now simply note that the members of the family $\{\Delta_I\}_{I\in \Gamma_N(X,t,r)}$ have bounded overlap. Hence
\begin{equation}
\sum_{I\in \Gamma_N(X,t,r)}\ell(I)^{n+1}\lesssim r^{n+1}.
\label{eq:fregtgtr+blla++}
\end{equation}
To handle the contribution to \eqref{eq:fregtgtr+} from $I\in \Gamma(X,t,r)$ we note that $\partial C(X,t,r)=S\cup T\cup B$ where $S$ is the lateral part of the cylinder, and $T$ and $B$ are the top and the bottom of the cylinder, respectively. We introduce
\begin{align*}
\Gamma_{T}(X,t,r)&:=\{I\in \Gamma(X,t,r):\ I^\ast\cap T\neq \emptyset\},\\
\Gamma_{B}(X,t,r)&:=\{I\in \Gamma(X,t,r):\ I^\ast\cap B\neq \emptyset\},
\end{align*}
and $\Gamma_{S}(X,t,r)=\Gamma(X,t,r)\setminus\Gamma_{T}(X,t,r)\setminus\Gamma_{B}(X,t,r)$.  Hence,
\begin{equation}
\sum_{I\in \Gamma(X,t,r)}\ell(I)^{n+1}\leq \sum_{I\in \Gamma_S(X,t,r)}\ell(I)^{n+1}+\sum_{I\in \Gamma_T(X,t,r)}\ell(I)^{n+1}+\sum_{I\in \Gamma_B(X,t,r)}\ell(I)^{n+1}.
\label{eq:fregtgtr+blla+ML}
\end{equation}
Given $I\in \Gamma_S(X,t,r)$ we see that there exists a cylinder $C_I=C_I(\hat X_I,\hat t_I,\hat r_I)$ such that $(\hat X_I,\hat t_I)\in S$,
$\hat r_I\sim \ell(I)$, and such that $\mathcal{H}_p^{n+1}(S\cap C_I)\sim \ell(I)^{n+1}$. The cylinders $\{C_I\}$ have bounded overlap, as the Whitney cubes have, and it follows that
\begin{equation}
\sum_{I\in \Gamma_S(X,t,r)}\ell(I)^{n+1}\lesssim \sum_{I\in \Gamma_S(X,t,r)} \mathcal{H}_p^{n+1}(S\cap C_I)\lesssim \mathcal{H}_p^{n+1}(S)\lesssim r^{n+1}.
\label{eq:fregtgtr+blla+ML+}
\end{equation}
Similarly, given $I\in \Gamma_B(X,t,r)$ we see that there exists a cylinder $C_I=C_I(\hat X_I,\hat t_I,\hat r_I)$ such that $(\hat X_I,\hat t_I)\in B$,
$\hat r_I\sim \ell(I)$, and such that $|B\cap C_I|\sim \ell(I)^{n}$. Again, the cylinders $\{C_I\}$ have bounded overlap and we see that
\begin{equation}
\sum_{I\in \Gamma_B(X,t,r)}\ell(I)^{n+1}\lesssim r\sum_{I\in \Gamma_B(X,t,r)}\ell(I)^{n}\lesssim r\sum_{I\in \Gamma_B(X,t,r)}|B\cap C_I|
\lesssim r|B|\lesssim r^{n+1}.
\label{eq:fregtgtr+blla+ML++}
\end{equation}
The sum with respect to $I\in \Gamma_T(X,t,r)$ can be treated analogously.  This completes the proof of \eqref{eq:fregtgtr+} and the proof of the lemma.
\end{proof}

\section{Proof of Theorem \ref{thm1-a}}\label{Proofmain}

Let $(X,t)\in \Sigma$, $r>0$, $C(X,t,r)\subset C(X_0,t_0,r_0/c)$, $\W_N=\W_N(X,t,r)$, and recall the cutoff introduced in \eqref{cuttoff}, i.e.,
\begin{align*}
\Psi_N(Y,s)=\sum_{I\in \W_N} \eta_I(Y,s).
\end{align*}
Based on this we introduce
\begin{align}\label{sq3}
J:=\iiint |\nabla_X u|^{q}(u_{x_ix_j})^2u\eta_I\, \d Y\d s,\ \tilde J:=\frac 1 2 \iiint |\nabla_X u|^{r}u_{t}^2u\eta_I\, \d Y\d s,
\end{align}
where we use summation convention with respect to  $I\in \W_N$.  \eqref{boundsaapall} implies  that $u$ is a strong/classical solution to the evolutionary $p$-Laplacian in $\Omega\cap C(X_0,t_0,2r_0)$, see subsection \ref{nondd}. In particular, in $\Omega\cap C(X_0,t_0,2r_0)$
$u$ satisfies, in the strong or pointwise sense,
\begin{align}\label{sq1}
u_t=\nabla_X\cdot (|\nabla_X u|^{p-2}\nabla_X u)=|\nabla_X u|^{p-2}\Delta u+(p-2)|\nabla_X u|^{p-4}\Delta_\infty u,
\end{align}
where
\begin{align}\label{sq2}
\Delta_\infty u:=u_{x_i}u_{x_j}u_{x_ix_j}.
\end{align}
To prove Theorem \ref{thm1-a} we will integrate by parts a number of times in $J$ and $\tilde J$. Each integration by parts will result in a number of terms. A term labelled with the letter $G$
will be referred to as a Good term and such a term will either contain a factor $(\eta_I)_{x_j}$ or a factor $(\eta_I)_{t}$. Using this, \eqref{boundsaapa} and Lemma \ref{partialADR}, we will always be able to conclude that such a term satisfies $|G|\lesssim r^{n+1}$, with implicit constant depending only  on $n$, $M$, $\gamma$, $p$, $q$, and $r$. Later we choose $r=q-2p+4$.

To start the proof we first estimate $J$ and in this case we first note, using integration by parts, that
\begin{align}\label{sq4}
J&=\iiint|\nabla_X u|^{q}u_{x_ix_j}u_{x_ix_j}u\eta_I\, \d Y\d s=-G_1-J_1-J_2-qJ_3,
\end{align}
where
\begin{align}\label{sq4b}
G_1&:=\iiint|\nabla_X u|^{q}u_{x_ix_j}u_{x_i}u(\eta_I)_{x_j} \, \d Y\d s,\notag\\
J_1&:=\iiint|\nabla_X u|^{q}(\Delta_\infty u)\eta_I\, \d Y\d s,\notag\\
J_2&:=\iiint|\nabla_X u|^{q}(\Delta u_{x_i})u_{x_i}u\eta_I\, \d Y\d s,\notag\\
J_3&:=\iiint|\nabla_X u|^{q}u_{x_k}u_{x_kx_j}u_{x_ix_j}u_{x_i}u\eta_I\, \d Y\d s.
\end{align}
Note that
\begin{align}\label{sq5}
J_3=\frac 1 4\iiint|\nabla_X u|^{q}((|\nabla_X u|^2)_{x_j})^2u\eta_I\, \d Y\d s>0.
\end{align}
Hence,
\begin{align}\label{sq4+}
J+qJ_3=-G_1-J_1-J_2.
\end{align}
We  manipulate $J_2$ by using the equation. Note that
\begin{align}\label{sq6}
\Delta u=|\nabla_X u|^{2-p}u_t-(p-2)|\nabla_X u|^{-2}\Delta_\infty u.
\end{align}
Hence,
\begin{align}\label{sq7}
\Delta u_{x_i}=&(|\nabla_X u|^{2-p}u_t)_{x_i}-(p-2)(|\nabla_X u|^{-2}\Delta_\infty u)_{x_i}\notag\\
=&|\nabla_X u|^{2-p}u_{tx_i}+(2-p)|\nabla_X u|^{-p}u_{x_k}u_{x_kx_i}u_t\notag\\
&+2(p-2)|\nabla_X u|^{-4}u_{x_k}u_{x_kx_i}\Delta_\infty u-(p-2)|\nabla_X u|^{-2}(\Delta_\infty u)_{x_i}.
\end{align}
Using this we see that
\begin{align}\label{sq8}
J_2=&\iiint|\nabla_X u|^{q+2-p}u_{tx_i}u_{x_i}u\eta_I\, \d Y\d s\notag\\
&+(2-p)\iiint|\nabla_X u|^{q-p}u_{x_k}u_{x_kx_i}u_tu_{x_i}u\eta_I\, \d Y\d s\notag\\
&+2(p-2)\iiint|\nabla_X u|^{q-4}u_{x_k}u_{x_kx_i}(\Delta_\infty u) u_{x_i}u\eta_I\, \d Y\d s\notag\\
&-(p-2)\iiint|\nabla_X u|^{q-2}(\Delta_\infty u)_{x_i}u_{x_i}u\eta_I\, \d Y\d s .
\end{align}
This can be simplified and we deduce
\begin{align}\label{sq9}
J_2=&\iiint|\nabla_X u|^{q+2-p}u_{tx_i}u_{x_i}u\eta_I\, \d Y\d s\notag\\
&-(p-2)\iiint|\nabla_X u|^{q-p}u_t(\Delta_\infty u)u\eta_I\, \d Y\d s\notag\\
&+2(p-2)\iiint|\nabla_X u|^{q-4}(\Delta_\infty u)^2 u\eta_I\, \d Y\d s\notag\\
&-(p-2)\iiint|\nabla_X u|^{q-2}(\Delta_\infty u)_{x_i}u_{x_i}u\eta_I\, \d Y\d s.
\end{align}
Let
\begin{align}\label{sq9aa}
J_{21}=\iiint|\nabla_X u|^{q+2-p}u_{tx_i}u_{x_i}u\eta_I\, \d Y\d s.
\end{align}
Then
\begin{align}\label{sq10}
J_{21}=&\frac 1 2\iiint|\nabla_X u|^{q+2-p}(|\nabla_X u|^2)_tu\eta_I\, \d Y\d s\notag\\
=&-\frac 1 2\iiint|\nabla_X u|^{q+4-p}u(\eta_I)_t\, \d Y\d s\notag\\
&-\frac1 2 \iiint|\nabla_X u|^{q+4-p}u_t\eta_I\, \d Y\d s\notag\\
&-\frac 1 2(q+2-p)\iiint|\nabla_X u|^{q+2-p}u_{tx_i}u_{x_i}u\eta_I\, \d Y\d s.
\end{align}
In particular, we can conclude that
\begin{align}\label{sq11}
J_{21}=&-\frac 1 {(q+4-p)}\iiint|\nabla_X u|^{q+4-p}u(\eta_I)_t\, \d Y\d s\notag\\
&-\frac1 {(q+4-p)} \iiint|\nabla_X u|^{q+4-p}u_t\eta_I\, \d Y\d s.
\end{align}
Put together we see that
\begin{align}\label{sq9-}
-J_1-J_2=&G_2-\iiint|\nabla_X u|^{q}(\Delta_\infty u)\eta_I\, \d Y\d s\notag\\
&+\frac1 {(q+4-p)} \iiint|\nabla_X u|^{q+4-p}u_t\eta_I\, \d Y\d s\notag\\
&+(p-2)\iiint|\nabla_X u|^{q-p}u_t(\Delta_\infty u)u\eta_I\, \d Y\d s\notag\\
&-2(p-2)\iiint|\nabla_X u|^{q-4}(\Delta_\infty u)^2 u\eta_I\, \d Y\d s\notag\\
&+(p-2)\iiint|\nabla_X u|^{q-2}(\Delta_\infty u)_{x_i}u_{x_i}u\eta_I\, \d Y\d s,
\end{align}
where
\begin{align}\label{sq10Bonn}
G_{2}:=\frac 1 {(q+4-p)}\iiint|\nabla_X u|^{q+4-p}u(\eta_I)_t\, \d Y\d s.
\end{align}
Using $u_t=|\nabla_X u|^{p-2}\Delta u+(p-2)|\nabla_X u|^{p-4}\Delta_\infty u$ in \eqref{sq9-} we deduce
\begin{align}\label{sq9-+}
-J_1-J_2=&G_2-\iiint|\nabla_X u|^{q}(\Delta_\infty u)\eta_I\, \d Y\d s\notag\\
&+\frac1 {(q+4-p)} \iiint|\nabla_X u|^{q+2}(\Delta u)\eta_I\, \d Y\d s\notag\\
&+\frac{(p-2)} {(q+4-p)} \iiint|\nabla_X u|^{q}(\Delta_\infty u)\eta_I\, \d Y\d s\notag\\
&+(p-2)\iiint|\nabla_X u|^{q-2}(\Delta u)(\Delta_\infty u)u\eta_I\, \d Y\d s\notag\\
&+(p-2)(p-4)\iiint|\nabla_X u|^{q-4}(\Delta_\infty u)^2u\eta_I\, \d Y\d s\notag\\
&+(p-2)\iiint|\nabla_X u|^{q-2}(\Delta_\infty u)_{x_i}u_{x_i}u\eta_I\, \d Y\d s.
\end{align}

We next manipulate $\tilde J$ and in this case we first note that
\begin{align}\label{sq1++}
\frac 1 2(u_t)^2\leq (|\nabla_X u|^{2p-4}(\Delta u)^2+(p-2)^2|\nabla_X u|^{2p-8}(\Delta_\infty u)^2.
\end{align}
Hence,
\begin{align}\label{sq3a}
\tilde J\leq& \tilde J_1+\tilde J_2,
\end{align}
where
\begin{align}\label{sq3abonn}
\tilde J_1&:=\iiint|\nabla_X u|^{r+2p-4}(\Delta u)^2u\eta_I\, \d Y\d s,\notag\\
\tilde J_2&:=(p-2)^2\iiint|\nabla_X u|^{r+2p-8}(\Delta_\infty u)^2u\eta_I\, \d Y\d s.
\end{align}
Using partial integration we see that
\begin{align}\label{sq11ml}
\tilde J_1=&-\iiint|\nabla_X u|^{r+2p-4}(\Delta u)u_{x_i}u(\eta_I)_{x_i} \, \d Y\d s\notag\\
&-\iiint|\nabla_X u|^{r+2p-2}(\Delta u)\eta_I\, \d Y\d s\notag\\
&-\iiint|\nabla_X u|^{r+2p-4}(\Delta u_{x_i})u_{x_i}u\eta_I\, \d Y\d s\notag\\
&-(r+2p-4)\iiint|\nabla_X u|^{r+2p-6}(\Delta u)(\Delta_\infty u)u\eta_I\, \d Y\d s.
\end{align}
Furthermore,
\begin{align}\label{sq11++}
&\iiint|\nabla_X u|^{r+2p-6}(\Delta u)(\Delta_\infty u)u\eta_I\, \d Y\d s\notag\\
=&\iiint|\nabla_X u|^{r+2p-6}u_{x_ix_i}u_{x_k}u_{x_kx_j}u_{x_j}u\eta_I\, \d Y\d s\notag\\
=&-\iiint|\nabla_X u|^{r+2p-4}u_{x_ix_i}u_{x_j}u(\eta_I)_{x_j} \, \d Y\d s-\iiint|\nabla_X u|^{r+2p-2}(\Delta u)\eta_I\, \d Y\d s\notag\\
&-\iiint|\nabla_X u|^{r+2p-4}(\Delta u)^2u\eta_I\, \d Y\d s-\iiint|\nabla_X u|^{r+2p-4}(\Delta u_{x_j})u_{x_j}\eta_I\, \d Y\d s.
\end{align}
Combining the last two displays we see that
\begin{align}\label{sq11j}
\tilde J_1=&-G_3-\iiint|\nabla_X u|^{r+2p-2}(\Delta u)\eta_I\, \d Y\d s\notag\\
&-\iiint|\nabla_X u|^{r+2p-4}(\Delta u_{x_i})u_{x_i}u\eta_I\, \d Y\d s,
\end{align}
where
\begin{align}\label{sq11bon}
G_3:=\iiint|\nabla_X u|^{r+2p-4}(\Delta u)u_{x_i}u(\eta_I)_{x_i} \, \d Y\d s+\iiint|\nabla_X u|^{r+2p-4}u_{x_ix_i}u_{x_j}u(\eta_I)_{x_j} \, \d Y\d s.
\end{align}
Hence,
\begin{align}\label{sq3aml}
\tilde J\leq &-G_3-\iiint|\nabla_X u|^{r+2p-2}(\Delta u)\eta_I\, \d Y\d s\notag\\
&-\iiint|\nabla_X u|^{r+2p-4}(\Delta u_{x_i})u_{x_i}u\eta_I\, \d Y\d s\notag\\
&+(p-2)^2\iiint|\nabla_X u|^{r+2p-8}(\Delta_\infty u)^2u\eta_I\, \d Y\d s.
\end{align}
Repeating the deduction made for $J_2$ we see that
\begin{align}\label{sq11+ml}
&-\iiint|\nabla_X u|^{r+2p-4}(\Delta u_{x_i})u_{x_i}u\eta_I\, \d Y\d s\notag\\
&=\frac 1 {(r+p)}\iiint|\nabla_X u|^{q+p}u(\eta_I)_t\, \d Y\d s\notag\\
&+\frac1 {(r+p)} \iiint|\nabla_X u|^{r+p}u_t\eta_I\, \d Y\d s\notag\\
&+(p-2)\iiint|\nabla_X u|^{r+p-4}u_t(\Delta_\infty u)u\eta_I\, \d Y\d s\notag\\
&-2(p-2)\iiint|\nabla_X u|^{r+2p-8}(\Delta_\infty u)^2 u\eta_I\, \d Y\d s\notag\\
&+(p-2)\iiint|\nabla_X u|^{r+2p-6}(\Delta_\infty u)_{x_i}u_{x_i}u\eta_I\, \d Y\d s.
\end{align}
Put together we have
\begin{align}\label{sq11+}
\tilde J\leq&-G_3+G_4-\iiint|\nabla_X u|^{r+2p-2}(\Delta u)\eta_I\, \d Y\d s\notag\\
&+\frac1 {(r+p)} \iiint|\nabla_X u|^{r+p}u_t\eta_I\, \d Y\d s\notag\\
&+(p-2)\iiint|\nabla_X u|^{r+p-4}u_t(\Delta_\infty u)u\eta_I\, \d Y\d s\notag\\
&+2(p-2)(p-4)\iiint|\nabla_X u|^{r+2p-8}(\Delta_\infty u)^2 u\eta_I\, \d Y\d s\notag\\
&+(p-2)\iiint|\nabla_X u|^{r+2p-6}(\Delta_\infty u)_{x_i}u_{x_i}u\eta_I\, \d Y\d s,
\end{align}
where
\begin{align}\label{sq11+bon}
G_4:=\frac 1 {(r+p)}\iiint|\nabla_X u|^{q+p}u(\eta_I)_t\, \d Y\d s.
\end{align}
Using that $u_t=|\nabla_X u|^{p-2}\Delta u+(p-2)|\nabla_X u|^{p-4}\Delta_\infty u$, \eqref{sq11+} implies that
\begin{align}\label{sq11+a}
\tilde J\leq& -G_3+G_4-\iiint|\nabla_X u|^{r+2p-2}(\Delta u)\eta_I\, \d Y\d s\notag\\
&+(p-2)\iiint|\nabla_X u|^{r+2p-6}(\Delta u)(\Delta_\infty u)u\eta_I\, \d Y\d s\notag\\
&+\frac1 {(r+p)} \iiint|\nabla_X u|^{r+2p-2}(\Delta u)\eta_I\, \d Y\d s\notag\\
&+\frac{(p-2)} {(r+p)} \iiint|\nabla_X u|^{r+2p-4}(\Delta_\infty u)\eta_I\, \d Y\d s\notag\\
&+(p-2)(3p-10)\iiint|\nabla_X u|^{r+2p-8}(\Delta_\infty u)^2u\eta_I\, \d Y\d s\notag\\
&+(p-2)\iiint|\nabla_X u|^{r+2p-6}(\Delta_\infty u)_{x_i}u_{x_i}u\eta_I\, \d Y\d s.
\end{align}

Above we have conducted some preliminary manipulations of $J$ and $\tilde J$. In particular, by \eqref{sq4+},
\begin{align*}
J+qJ_3=-G_1-J_1-J_2,
\end{align*}
and in \eqref{sq9-+} we concluded an equality for $-J_1-J_2$. We have also deduced an upper bound on $\tilde J$ in \eqref{sq11+a}.

To proceed, we introduce, for $a>0$,
\begin{align}\label{sq11+b-}
T^a&:=\iiint|\nabla_X u|^{a}(\Delta u)(\Delta_\infty u)u\eta_I\, \d Y\d s\notag\\
&+\iiint|\nabla_X u|^{a}(\Delta_\infty u)_{x_i}u_{x_i}u\eta_I\, \d Y\d s.
\end{align}
Integrating by parts in the second term we see that
\begin{align}\label{sq11+b}
T^a=&-G_5- \iiint|\nabla_X u|^{a+2}(\Delta_\infty u)\eta_I\, \d Y\d s\notag\\
&-a\iiint|\nabla_X u|^{a-2}(\Delta_\infty u)^2u\eta_I\, \d Y\d s,
\end{align}
where
\begin{align}\label{sq11+ba}
G_5&:=\iiint|\nabla_X u|^{a}(\Delta_\infty u)u_{x_i}u(\eta_I)_{x_i}\, \d Y\d s.
\end{align}

Using \eqref{sq11+b} in \eqref{sq9-+} with $a={q-2}$,
\begin{align}\label{sq9-ffa}
-J_1-J_2=&-G_1-G_5-\biggl ((p-1)-\frac{(p-2)} {(q+4-p)}\biggr )\iiint|\nabla_X u|^{q}(\Delta_\infty u)\eta_I\, \d Y\d s\notag\\
&+\frac1 {(q+4-p)} \iiint|\nabla_X u|^{q+2}(\Delta u)\eta_I\, \d Y\d s\notag\\
&+(p-2)(p-q-2)\iiint|\nabla_X u|^{q-4}(\Delta_\infty u)^2u\eta_I\, \d Y\d s.
\end{align}
Similarly, using \eqref{sq11+b} with $a=r+2p-6$ in \eqref{sq11+a} we see that
\begin{align}\label{sq11+ajy}
\tilde J\leq& -G_3+G_4-G_5-\biggl (1-\frac1 {(r+p)}\biggr )\iiint|\nabla_X u|^{r+2p-2}(\Delta u)\eta_I\, \d Y\d s\notag\\
&-\biggl ((p-2)-\frac{(p-2)} {(r+p)}\biggr )\iiint|\nabla_X u|^{r+2p-4}(\Delta_\infty u)\eta_I\, \d Y\d s\notag\\
&+(p-2)(p-r-4)\iiint|\nabla_X u|^{r+2p-8}(\Delta_\infty u)^2u\eta_I\, \d Y\d s.
\end{align}

Next we introduce
\begin{align}\label{sq11+ajya}
\tilde T^a:=\iiint|\nabla_X u|^{a}(\Delta_\infty u)\eta_I\, \d Y\d s=\iiint|\nabla_X u|^{a}u_{x_i}u_{x_ix_j}u_{x_j}\eta_I\, \d Y\d s.
\end{align}
Again by partial integration
\begin{align}\label{sq11+ajya+}
\tilde T^a=G_6-\iiint|\nabla_X u|^{a+2}(\Delta u)\eta_I\, \d Y\d s-(a+1)\tilde T_a,
\end{align}
where
\begin{align}\label{sq11+ajya+h}
G_6:=\iiint|\nabla_X u|^{a}u_{x_i}u_{x_i}u_{x_j}(\eta_I)_{x_j}\, \d Y\d s.
\end{align}
Hence,
\begin{align}\label{sq11+ajya+a}
\tilde T^a=\frac {G_6}{a+2}-\frac 1{a+2}\iiint|\nabla_X u|^{a+2}(\Delta u)\eta_I\, \d Y\d s.
\end{align}
Using this we see that \eqref{sq9-ffa} simplifies to
\begin{align}\label{sq9-ffaffa}
-J_1-J_2=& G+\biggl (\frac1 {(q+4-p)}+\frac{(p-1)}{q+2} +\frac{(p-2)} {(q+4-p)}\frac 1{q+2}\biggr )\iiint|\nabla_X u|^{q+2}(\Delta u)\eta_I\, \d Y\d s\notag\\
&+(p-2)(p-q-2)\iiint|\nabla_X u|^{q-4}(\Delta_\infty u)^2u\eta_I\, \d Y\d s,
\end{align}
and that \eqref{sq11+ajy} simplifies to
\begin{align}\label{sq11+ajyk}
\tilde J\leq& G-\biggl (1-\frac1 {(r+p)}\biggr )\biggl (1-\frac {(p-2)}{(r+2p-2)}\biggr )\iiint|\nabla_X u|^{r+2p-2}(\Delta u)\eta_I\, \d Y\d s\notag\\
&+(p-2)(p-r-4)\iiint|\nabla_X u|^{r+2p-8}(\Delta_\infty u)^2u\eta_I\, \d Y\d s.
\end{align}
In the last two displays, and from now on, $G$ is a Good term which depends linearly on $G_1-G_6$.

We now consider the sum $J+qJ_3+\alpha \tilde J$ where $\alpha>0$ is a constant to be chosen in order to achieve cancellation. We have
\begin{align}\label{sq11+ajykpp}
&J+qJ_3+\alpha \tilde J\notag\\
\leq&G+\biggl (\frac1 {(q+4-p)}+\frac{(p-1)}{q+2} +\frac{(p-2)} {(q+4-p)}\frac 1{q+2}\biggr )\iiint|\nabla_X u|^{q+2}(\Delta u)\eta_I\, \d Y\d s\notag\\
&+(p-2)(p-q-2)\iiint|\nabla_X u|^{q-4}(\Delta_\infty u)^2u\eta_I\, \d Y\d s\notag\\
&-\alpha\biggl (1-\frac1 {(r+p)}\biggr )\biggl (1-\frac {(p-2)}{(r+2p-2)}\biggr )\iiint|\nabla_X u|^{r+2p-2}(\Delta u)\eta_I\, \d Y\d s\notag\\
&+\alpha(p-2)(p-r-4)\iiint|\nabla_X u|^{r+2p-8}(\Delta_\infty u)^2u\eta_I\, \d Y\d s.
\end{align}
Let $r=q-2p+4$ and
\begin{align}\label{sq11+ajykppjj}
E_1&:=\iiint|\nabla_X u|^{q+2}(\Delta u)\eta_I\, \d Y\d s,\notag\\
E_2&:=\iiint|\nabla_X u|^{q-4}(\Delta_\infty u)^2u\eta_I\, \d Y\d s.
\end{align}
Then \eqref{sq11+ajykpp} can be expressed
\begin{align}\label{sq11+ajykppagag}
J+qJ_3+\alpha \tilde J \leq& G+\biggl (\frac1 {(q-p+4)}+\frac{(p-1)}{q+2} +\frac{(p-2)} {(q-p+4)}\frac 1{q+2}\biggr )E_1\notag\\
&-\alpha\biggl (1-\frac1 {(q-p+4)}\biggr )\biggl (1-\frac {(p-2)}{(q+2)}\biggr )E_1\notag\\
&+(p-2)\biggl((p-q-2)+\alpha(p-q-8)\biggr)E_2.
\end{align}
Given $q$ we let $\alpha$ solve the equation
\begin{align}\label{sq11+ajykppagaglll}
&\biggl (\frac1 {(q-p+4)}+\frac{(p-1)}{q+2} +\frac{(p-2)} {(q-p+4)}\frac 1{q+2}\biggr )=\alpha\biggl (1-\frac1 {(q-p+4)}\biggr )\biggl (1-\frac {(p-2)}{(q+2)}\biggr ).
\end{align}
This equation is equivalent to
\begin{align}\label{sq11+ajykppagagllllak}
&\biggl (\frac{p+q+(p-1)(q-p+4)}{(q-p+4)}\biggr )=\alpha(q-p+3).
\end{align}
Hence,
\begin{align}\label{sq11+ajykppagagllli}
\alpha =&\frac{p+q+(p-1)(q-p+4)}{(q-p+4)(q-p+3)}.
\end{align}
Note that $\alpha\in (0,\infty)$ if $q>p-4$. With this choose of $\alpha$, \eqref{sq11+ajykppagag} implies that
\begin{align}\label{sq11+ajykppagagbon}
J+qJ_3+\alpha \tilde J\leq G+(p-2)\biggl((p-q-2)+\alpha(p-q-8)\biggr)E_2.
\end{align}
Finally, if we let  $q>p-4$ be such that
\begin{align}\label{sq11+ajykppagagbon+ha}
-\beta:=(p-q-2)+\alpha(p-q-8)<0,
\end{align}
then
\begin{align}\label{sq11+ajykppagagbon+}
J+qJ_3+\alpha \tilde J+\beta(p-2)E_2\leq G.
\end{align}
Note that \eqref{sq11+ajykppagagbon+ha} is satisfied if and only if
\begin{align}\label{sq11+ajykppagagbon+ha+}
q>-\frac {2+8\alpha}{1+\alpha}+p=p-8+\frac 6{1+\alpha},
\end{align}
and obviously this is the case if $q\geq p-2$. The proof of Theorem \ref{thm1-a} is now almost complete. We just have to make sure that the terms $|\nabla_Xu|^\gamma$ appearing in all terms labelled with $G$ has an exponent $\gamma$ which is non-negative. To achieve this it is sufficient that $q\geq\max\{2,p-4\}$. We can conclude that
if $p\geq 4$, then $q\geq p-2$  works for all purposes, and if $4> p>2$, then $q\geq 2$ works for all purposes. In particular, if $q\geq p$ then all these cases are covered.

\begin{remark}\label{squarefunctiona}  Note that in  the proof of Theorem \ref{thm1-a} we actually establish several additional square function estimates that may be of interest. Indeed, from the proof we can conclude the following. Under the assumptions in the statement of Theorem \ref{thm1-a}, we have
\begin{align*}
 \iiint_{\Omega\cap C(X,t,r)}
 |\nabla_X u(Y,s)|^{q}((|\nabla_X u(Y,s)|^2)_{x_j})^2u(Y,s) \, \d Y\d s &\lesssim r^{n+1},\\
\iiint_{\Omega\cap C(X,t,r)}
 |\nabla_X u(Y,s)|^{q-4}|\Delta_\infty u(Y,s)|^2u(Y,s) \, \d Y\d s &\lesssim r^{n+1},\end{align*}
 where $\Delta_\infty u:=u_{x_i}u_{x_j}u_{x_ix_j}$, and with implicit constants depending only  on $n$, $M$, $\gamma$, $p$ and $q$.
\end{remark}

\subsection{Proof of Corollary \ref{thm1-b}}  Using the same notation as in the proof of Theorem \ref{thm1-a} we introduce
\begin{align}\label{sq3ml}
&K:=\iiint|\nabla_X u|^{a}(u_{x_ix_jx_l})^2u^3\eta_I\, \d Y\d s,\ L:=\frac 1 2 \iiint|\nabla_X u|^{b}(u_{tx_k})^2u^3\eta_I\, \d Y\d s,
\end{align}
where we again use summation convention with resepct to $I\in\W_N$, and where $a,b\in\mathbb R$.  We want to estimate $K$, $L$. As in the  proof of Theorem \ref{thm1-a}, we will integrate by parts a number of times in $K$ and $L$. Each integration by parts will result in a number of terms. A term labelled with the letter $G$
will be referred to as a Good term and such a term will either contain a factor $(\eta_I)_{x_j}$ or a factor $(\eta_I)_{t}$. Using this, \eqref{boundsaapa} and \eqref{boundsaapacor} and Lemma \ref{partialADR}, we will always be able to conclude that $|G|\lesssim r^{n+1}$.

To estimate $K$ we write
\begin{align}\label{sq4L--}
K=&\iiint|\nabla_X u|^{a}(u_{x_ix_jx_l})^2u^3\eta_I\, \d Y\d s=\iiint|\nabla_X u|^{a}u_{x_ix_jx_l}u_{x_ix_jx_l}u^3\eta_I\, \d Y\d s,
\end{align}
we perform integration by parts, and as a result
\begin{align}\label{sq4L}
K=&-\iiint|\nabla_X u|^{a}u_{x_ix_jx_l}u_{x_ix_j}u^3(\eta_I)_{x_l} \, \d Y\d s\notag\\
&-3\iiint|\nabla_X u|^{a}u_{x_ix_jx_l}u_{x_ix_j}u_{x_l}u^2\eta_I\, \d Y\d s\notag\\
&-\iiint|\nabla_X u|^{a}(\Delta u_{x_ix_j})u_{x_ix_j}u^3\eta_I\, \d Y\d s\notag\\
&-a\iiint|\nabla_X u|^{a}u_{x_k}u_{x_kx_l}u_{x_ix_jx_l}u_{x_ix_j}u^3\eta_I\, \d Y\d s\notag\\
&:=G-K_1-K_2-aK_3.
\end{align}
Using Cauchy-Schwarz with $\epsilon$, Theorem \ref{thm1-a}, \eqref{boundsaapa} and \eqref{boundsaapacor},
\begin{align}\label{sq4L+}
|K_1|+|K_3|\leq&  \epsilon K+c(\epsilon)r^{n+1}.
\end{align}
 Hence,
\begin{align}\label{sq4Lk}
K\lesssim &|G|+|K_2|=G+\biggl|\iiint|\nabla_X u|^{a}(\Delta u_{x_ix_j})u_{x_ix_j}u^3\eta_I\, \d Y\d s\biggr |.
\end{align}
Using the equation we see that
\begin{align}\label{sq7ag+}
\Delta u_{x_ix_j}=&|\nabla_X u|^{2-p}u_{tx_ix_j}+(2-p)|\nabla_X u|^{-p}u_{tx_i}u_{x_l}u_{x_lx_j}\notag\\
&+(2-p)|\nabla_X u|^{-p}u_{x_k}u_{x_kx_i}u_t+(2-p)|\nabla_X u|^{-p}u_{x_k}u_{x_kx_i}u_{tx_j}\notag\\
&+(2-p)|\nabla_X u|^{-p}u_{x_k}u_{x_kx_ix_j}u_t+(2-p)|\nabla_X u|^{-p}u_{x_kx_j}u_{x_kx_i}u_t\notag\\
&-(2-p)p|\nabla_X u|^{-p-2}u_{x_k}u_{x_kx_i}u_{x_l}u_{x_lx_j}u_t\notag\\
&+2(p-2)|\nabla_X u|^{-4}u_{x_k}u_{x_kx_i}(\Delta_\infty u)_{x_j}\notag\\
&+2(p-2)|\nabla_X u|^{-4}u_{x_k}u_{x_kx_ix_j}\Delta_\infty u\notag\\
&+2(p-2)|\nabla_X u|^{-4}u_{x_kx_j}u_{x_kx_i}\Delta_\infty u\notag\\
&-8(p-2)|\nabla_X u|^{-6}u_{x_l}u_{x_lx_j}u_{x_k}u_{x_kx_i}\Delta_\infty u\notag\\
&-(p-2)|\nabla_X u|^{-2}(\Delta_\infty u)_{x_ix_j}+2(p-2)|\nabla_X u|^{-4}u_{x_l}u_{x_lx_j}(\Delta_\infty u)_{x_i}.
\end{align}
Replacing $\Delta u_{x_ix_j}$ by the expression in \eqref{sq7ag+}, and again using Cauchy-Schwarz with $\epsilon$, \eqref{boundsaapa}, \eqref{boundsaapacor} and
Theorem \ref{thm1-a}, we see that
\begin{align}\label{sq4Lkhhae}
K_2=&\iiint|\nabla_X u|^{a}(\Delta u_{x_ix_j})u_{x_ix_j}u^3\eta_I\, \d Y\d s\notag\\
=&G+\frac 1 2 \iiint|\nabla_X u|^{a+2-p}((u_{x_ix_j})^2)_tu^3\eta_I\, \d Y\d s\notag\\
&-(p-2)\iiint|\nabla_X u|^{a-2}(\Delta_\infty u)_{x_ix_j}u_{x_ix_j}u^3\eta_I\, \d Y\d s\notag\\
=:&G+K_{21}+K_{22}.
\end{align}
Again $K_{21}$ can be treated as $G$. Focusing on $K_{22}$ we see that
\begin{align}\label{sq4Lkhhaf}
K_{22}=&-(p-2)\iiint|\nabla_X u|^{a-2}(\Delta_\infty u)_{x_ix_j}u_{x_ix_j}u^3\eta_I\, \d Y\d s\notag\\
=&G-(p-2)\iiint|\nabla_X u|^{a-2}u_{x_k}u_{x_kx_mx_ix_j}u_{x_m}u_{x_ix_j}u^3\eta_I\, \d Y\d s.
\end{align}
By partial integration,
\begin{align}\label{sq4Lkhhag}
K_{22}=&G-(p-2)\iiint|\nabla_X u|^{a-2}u_{x_k}u_{x_kx_mx_ix_j}u_{x_m}u_{x_ix_j}u^3\eta_I\, \d Y\d s\notag\\
=&G+(p-2)\iiint|\nabla_X u|^{a-2}u_{x_k}u_{x_ix_jx_k}u_{x_m}u_{x_ix_jx_m}u^3\eta_I\, \d Y\d s\notag\\
=&G+(p-2)\iiint|\nabla_X u|^{a-2}(\nabla_X u\cdot\nabla_X u_{x_ix_jx_m})^2u^3\eta_I\, \d Y\d s.
\end{align}
Hence,
\begin{align}\label{sq4Lklll}
K=&G-K_2=G-K_{22}\notag\\
=&G-(p-2)\iiint|\nabla_X u|^{a-2}(\nabla_X u\cdot\nabla_X u_{x_ix_jx_m})^2u^3\eta_I\, \d Y\d s.
\end{align}
In particular,
\begin{align}\label{sq4Lklll-}
(p-2)\iiint|\nabla_X u|^{a-2}(\nabla_X u\cdot\nabla_X u_{x_ix_jx_m})^2u^3\eta_I\, \d Y\d s+K&=G.
\end{align}
As $p>2$, then we can handle $K$, and hence one part of Corollary \ref{thm1-b} is proved.

Next we consider
\begin{align}\label{sq3ml+}
L= \iiint|\nabla_X u|^{b}(u_{tx_k})^2u^3\eta_I\, \d Y\d s.
\end{align}
We first note, using the equation, that
\begin{align}\label{sq1ml}
u_{tx_k}=&|\nabla_X u|^{p-2}\Delta u_{x_k}+(p-2)|\nabla_X u|^{p-4}(\Delta_\infty u)_{x_k}\notag\\
&+(p-2)|\nabla_X u|^{p-4}u_{x_m}u_{x_mx_k}\Delta u\notag\\
&+(p-2)(p-4)|\nabla_X u|^{p-6}u_{x_m}u_{x_mx_k}\Delta_\infty u.
\end{align}
Using this,
\begin{align}\label{sq3ml++}
L&= \iiint|\nabla_X u|^{b+p-2}(u_{tx_k})(\Delta u_{x_k})u^3\eta_I\, \d Y\d s\notag\\
&+(p-2)\iiint|\nabla_X u|^{b+p-4}(u_{tx_k})((\Delta_\infty u)_{x_k})u^3\eta_I\, \d Y\d s\notag\\
&+(p-2)\iiint|\nabla_X u|^{b+p-4}(u_{tx_k})(u_{x_m}u_{x_mx_k}\Delta u)u^3\eta_I\, \d Y\d s\notag\\
&+(p-2)(p-4)\iiint|\nabla_X u|^{b+p-6}(u_{tx_k})(u_{x_m}u_{x_mx_k}\Delta_\infty u)u^3\eta_I\, \d Y\d s\notag\\
&=: L_1+(p-2)L_2+(p-2)L_3+(p-2)(p-4)L_4.
\end{align}
Using Cauchy-Schwarz with $\epsilon$,
\begin{align}\label{sq3ddd}|L_1|+|L_2|+|L_3|+|L_4|\leq& (\epsilon_1+\epsilon_2)L+c(\epsilon_1)\iiint|\nabla_X u|^{b+2p-6}(u_{x_mx_k})^2u\eta_I\, \d Y\d s\notag\\
&+c(\epsilon_2)\iiint|\nabla_X u|^{b+2p-6}(u_{x_mx_kx_l})^2u^3\eta_I\, \d Y\d s.
\end{align}
The estimate for $L$ now follows from Theorem \ref{thm1-a}, and from the estimate of $K$ proved above. This completes the proof of Corollary \ref{thm1-b}.

\section{Parabolic uniform rectifiability, regular Lip(1,1/2) graphs, parabolic measure}\label{Upara}
As mentioned, the notion of parabolic uniformly rectifiability was introduced in \cite{HLN}, \cite{HLN1}, and concerns  time-varying boundaries which lack differentiability, and which are locally not necessarily given by graphs. For more recent developments concerning parabolic uniformly rectifiability and related topics, we refer to \cite{NS,BHHLN1,BHHLN2,BHHLN3,BHHLN4,BHHLN5}.
\subsection{Parabolic uniform rectifiability}
Let
$\Sigma \subset \mathbb R^{n+1}$ be a closed set.  Assume that $\Sigma$ is {parabolic Ahlfors-David regular} with constant $M\geq 1$. We introduce
\begin{eqnarray*} \gamma( Z, \tau, r  ):=\gamma_\Sigma( Z, \tau, r  ):= \inf_{P \in \mathcal{P}}  \biggl ( \, \bariint_{  \Delta ( Z, \tau,r) }  \, \biggl (\frac {\dist ( Y,s, P )}{r}\biggr )^2  \d \sigma (Y, s )\biggr )^{1/2}, \end{eqnarray*}
whenever $(Z,\tau)\in \Sigma $, $r>0$, and where  $\mathcal{P}$  is the set of $ n $-dimensional hyperplanes $ P $ containing a line
parallel to the $ t $ axis. We also introduce $\d \nu( Z, \tau,
r  ) :=\d \nu_\Sigma ( Z, \tau,
r  )  \, =  \, (\gamma_\Sigma  ( Z, \tau, r))^2 \, \d \sigma ( Z, \tau) \, r^{ - 1 }
\d r$.  Recall that $ \nu$  is defined  to be  a Carleson measure on $    \Delta( Y,s,R )  \times ( 0, R )$,  if there exists $ \Gamma <
\infty $ such that $ \nu ( \Delta( X, t,\rho )  \times ( 0, \rho) ) \, \leq \,
\Gamma  \, \rho^{d}$, whenever $ ( X, t  ) \in \Sigma  $ and $ C( X, t, \rho ) \subset C( Y, s, R)$.  The least such $ \Gamma  $  is
called the Carleson norm of $\nu$ on  $\Delta( Y,s,R ) \times ( 0, R ) $.
\begin{definition}\label{def1.UR}
Assume that $\Sigma  \subset \mathbb R^{n+1}$ is parabolic ADR  in the sense of Definition \ref{def1.ADR} with constant $M$. Let $\nu=\nu_\Sigma$ be defined as above. Then
 $\Sigma$ is parabolic uniformly rectifiable with constants $(M,\Gamma)$ if
\begin{eqnarray}\label{eq1.sf}
\| \nu \|:=\sup_{(X,t)\in\Sigma,\ \rho>0}  \rho^{ -d }\nu ( \Delta(X,t,\rho) \times ( 0, \rho) ) \, \leq \,
\Gamma.
\end{eqnarray}
Furthermore, if $(X_0,t_0)\in \Sigma$, $r_0>0$, then we say that $\Sigma\cap C(X_0,t_0,r_0)$ is locally parabolic uniform rectifiable with constants $(M,\Gamma)$, if
\begin{eqnarray}\label{eq1.sfll}
\sup_{(X,t)\in\Sigma,\ C(X,t,\rho)\subset C(X_0,t_0,r_0)}  \rho^{ -d }\nu ( \Delta(X,t,\rho) \times ( 0, \rho) ) \, \leq \,
\Gamma.
\end{eqnarray}
\end{definition}

\subsection{Lip(1,1/2) and regular Lip(1,1/2) graphs} Given a function $\psi:\mathbb R^{n-1}\times\mathbb R\to \mathbb R$  we let $D_{1/2}^t \psi  (x, t) $ denote
the $ 1/2 $ derivative in $ t $ of $ \psi ( x, \cdot ), x $ fixed.
This half derivative in time can be defined by way of the Fourier
transform using the multiplier $|\tau|^{1/2}$, or by
\begin{eqnarray} \label{1.8}
 D_{1/2}^t  \psi (x, t)  \equiv \hat c \int_{ \mathbb R }
\, \frac{ \psi ( x, s ) - \psi ( x, t ) }{ | s - t |^{3/2} } \, \d s,
\end{eqnarray} for properly chosen $ \hat c$. We let $ \| \cdot \|_* $ denote the
norm in parabolic $BMO(\mathbb R^{n})$ (replace standard cubes by parabolic cubes in the definition of $BMO$). A function $\psi:\mathbb R^{n-1}\times\mathbb R\to \mathbb R$ is called Lip(1,1/2) with constant $b_1$, if
\begin{eqnarray}\label{1.1}
|\psi(x,t)-\psi(y,s)|\leq b_1(|x-y|+|t-s|^{1/2})\
\end{eqnarray}
whenever $(x,t)\in\mathbb R^{n}$, $(y,s)\in\mathbb R^{n}$. If $\Sigma = \{(x, \psi(x,t), t): (x,t) \in \mathbb{R}^{n-1} \times \mathbb{R}\}$ in the coordinates $P \times P^\perp \times \mathbb{R}$, for some $t$-independent plane $P=P_x \in \mathcal{P}$ and for some Lip(1,1/2) function $\psi$, then we say that $\Sigma$ is a Lip(1,1/2) graph. An open set $\Omega\subset\mathbb R^{n+1}$ is said to be a
(unbounded) Lip(1,1/2)   graph
domain, with constant $b_1$, if
\begin{eqnarray}\label{1.1a}
\Omega=\Omega_\psi=\{(x,x_n,t)\in\mathbb R^{n-1}\times\mathbb R\times\mathbb R:x_n>\psi(x,t)\},
\end{eqnarray} for some Lip(1,1/2)  function $\psi$ having Lip(1,1/2)  constant bounded by $b_1$.

\begin{definition}\label{goodgraph.def}
 We say that $ \psi = \psi ( x, t ) : \mathbb R^{n-1}\times\mathbb R\to \mathbb R$ is a {regular} Lip(1,1/2)  function
with parameters $b_1$ and $b_2$,
if $\psi$ satisfies \eqref{1.1} and if
 \begin{eqnarray} \label{1.7}
 D_{1/2}^t\psi\in BMO(\mathbb R^n), \ \ \|D_{1/2}^t\psi\|_*\leq b_2<\infty.
\end{eqnarray}
  If $\Sigma = \{(x, \psi(x,t), t): (x,t) \in \mathbb{R}^{n-1} \times \mathbb{R}\}$ in the coordinates $P\times P^\perp \times \mathbb{R}$, for some $t$-independent plane $P=P_x\in \mathcal{P}$ and for some {regular} Lip(1,1/2) function $\psi$, then we say that $\Sigma$ is a {regular} Lip(1,1/2) graph.
\end{definition}

\begin{remark}  One can prove that in general $\psi$ being regular Lip(1,1/2) is strictly stronger than $\psi$ being Lip(1,1/2), i.e., there are examples of
functions $\psi$ which are  Lip(1,1/2) but not regular Lip(1,1/2), see \cite{LS},
\cite{KW}. One can also prove, in the context of Lip(1,1/2) graphs, that the graph being regular Lip(1,1/2) is equivalent to the graph being parabolic uniform rectifiable, see \cite{HLN1}.
\end{remark}

\begin{remark}\label{reggraph} Given $(X,t)\in \Sigma =\Sigma_\psi= \{(x, \psi(x,t), t): (x,t) \in \mathbb{R}^{n-1} \times \mathbb{R}\}$, and $r>0$, we let $\hat C(X,t,r)$ denote the orthogonal projection of
$C(X,t,r)$ onto $\mathbb{R}^{n-1} \times \mathbb{R}$. Consider $(X_0,t_0)\in \Sigma$, $r_0>0$, and assume that $\Sigma\cap C(X_0,t_0,r_0)=\Sigma_\psi\cap C(X_0,t_0,r_0)$ is locally parabolic uniform rectifiable with constants $(M,\Gamma)$ in the sense of \eqref{eq1.sfll}. Let $\phi=\phi(x,t)\in C_0^\infty(\hat C(X_0,t_0,2r_0/100))$ be such that
$0\leq\phi\leq 1$, $\phi\equiv 1$ on $\hat C(X_0,t_0,3r_0/200)$, and let $\tilde\psi(x,t):=\phi(x,t)\psi(x,t)$. Then
\begin{eqnarray} \label{1.7aga-}\sup_{(X,t)\in\Sigma_{\tilde\psi},\ r>0}\|\nu_{\tilde\psi}\|(\Sigma_{\tilde\psi}\cap C(X,t,r))\times (0,r))<\infty
\end{eqnarray}
and
 \begin{eqnarray} \label{1.7aga}
 D_{1/2}^t\tilde\psi\in BMO(\mathbb R^n), \ \ \|D_{1/2}^t\tilde\psi\|_*<\infty.
\end{eqnarray}
These conclusions are proved in Theorem 11 in \cite{LeNy}. In particular, $\Sigma_{\psi}\cap C(X,t,r_0/100)$ is given as a part of the graph of a (unbounded) {regular} Lip(1,1/2)  function.
\end{remark}

 Given a (unbounded) Lip(1,1/2)   graph domain $\Omega$  with constant $b_1$, $(X,t)=(x,x_n,t)\in\partial\Omega$, $r>0$,  we introduce reference points
\begin{align}\label{pointsref2}
A_{r}^\pm(x,t):=(x,x_n+4b_1r,t\pm r^2),\  A_{r}(x,t):=(x,x_n+4b_1r,t).
\end{align}
Furthermore, given $(X,t)=((x,\psi(x,t),t)\in\partial\Omega$,  and $\eta>0$, we introduce the (non-tangential) cone
\begin{equation}\label{nt-cone}
    \Gamma^\eta(X,t)
    := \{(Y,s)\in \Omega \mid  \dist(Y,s,X,t) < \eta |x_n-\psi(x)|\}.
\end{equation}
Given a function $u$ defined in $\Omega$, a function $f$ defined on $\partial \Omega$, and $(X,t)\in\partial \Omega $, we say that $u(X,t)=f(X,t)$ non-tangentially (n.t.) if
\[
\lim_{\substack{(Y,s)\in \Gamma^\eta(X,t)\\ (Y,s)\to (X,t)}}u(Y,s)=f(X,t),
\]
where $\eta=\eta(b_1)$ is chosen so that $\partial \Omega \cap\Gamma^\eta(X,t)=\{(X,t)\}$.  For $\eta$ fixed as stated, we let $\Gamma(X,t):=\Gamma^\eta(X,t)$.

\subsection{Rademacher's theorem for regular Lip(1,1/2)} Using that a regular Lip(1,1/2) function $\psi:\mathbb R^{n-1}\times\mathbb R\to \mathbb R$ is Lipschitz in the Euclidean sense in the spatial variables, one can apply Rademacher's theorem and conclude that there exists,
for $\d x\d t$ a.e. $(x,t) \in \mathbb R^{n - 1} \times \mathbb R$,  a linear map $M_{(x,t)} :\mathbb R^{n - 1} \to \mathbb R$ such that
\begin{equation} \frac{|\psi(y,t) - \psi(x,t) - M_{(x,t)}(y - x)|}{|x - y|} = \epsilon_{(x,t)}(|x - y|), \end{equation}
where $\epsilon_{(x,t)}(r) \to 0$ as $r \to 0$. However, the following theorem, see Theorem 3.10 in \cite{Orp}, states that this differentiability can be upgraded. In particular, the linear map determined by the horizontal gradient automatically approximates $\psi$ also in the vertical direction, $\d x\d t$ almost everywhere. This is an analogue of Rademacher's theorem for  regular Lip(1,1/2) functions.
 \begin{theorem}\label{rademacher} Let $\psi:\mathbb R^{n-1}\times\mathbb R\to \mathbb R$  be regular Lip(1,1/2). Then there exists, for  $\d x\d t$ a.e. $(x,t) \in \mathbb R^{n - 1} \times \mathbb R$, a linear map $M_{(x,t)}: \mathbb R^{n - 1} \to \mathbb R$ such that
 \begin{displaymath} \frac{|\psi(y,s) - \psi(x,t) - M_{(x,t)}(y - x)|}{\dist(x,t,y,s)} = \epsilon_{(x,t)}(\dist(x,t,y,s)), \end{displaymath}
 where $\epsilon_{(x,t)}(r) \to 0$ as $r \to 0$.
 \end{theorem}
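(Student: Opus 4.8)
The plan is to take $M_{(x,t)}$ to be the classical (Euclidean) differential of the slice $\psi(\cdot,t)$ at $x$. Since $\psi(\cdot,t)$ is $b_1$-Lipschitz for every fixed $t$, Rademacher's theorem applied slicewise together with Fubini's theorem shows that $\nabla_x\psi(x,t)$ exists for $\d x\,\d t$-a.e.\ $(x,t)$, and we put $M_{(x,t)}(v):=\nabla_x\psi(x,t)\cdot v$. Because $\psi$ is Lip(1,1/2) and $|\nabla_x\psi|\le b_1$, for every such $(x,t)$ the quotient
\[
\frac{|\psi(y,s)-\psi(x,t)-M_{(x,t)}(y-x)|}{\dist(x,t,y,s)}
\]
is bounded by $2b_1$ for all $(y,s)$, so it suffices to prove it tends to $0$ as $(y,s)\to(x,t)$ for a.e.\ $(x,t)$. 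I would split the numerator as
\[
\bigl(\psi(y,s)-\psi(x,s)-M_{(x,t)}(y-x)\bigr)+\bigl(\psi(x,s)-\psi(x,t)\bigr),
\]
a purely spatial increment at the frozen time $s$ plus a purely temporal increment at the frozen point $x$, and treat the two pieces separately.

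For the temporal piece the goal is to show $\psi(x,s)-\psi(x,t)=o(|s-t|^{1/2})$ as $s\to t$ for a.e.\ $(x,t)$; this is where the hypothesis $D_{1/2}^t\psi\in\mathrm{BMO}(\mathbb R^n)$ is used. Writing $g:=D_{1/2}^t\psi$, one recovers the slice $s\mapsto\psi(x,s)$ from $s\mapsto g(x,s)$ by fractional integration of order $1/2$ in $t$, so that
\[
\psi(x,s)-\psi(x,t)=\hat c\int_{\mathbb R}\Bigl(|s-u|^{-1/2}-|t-u|^{-1/2}\Bigr)g(x,u)\,\d u.
\]
By John--Nirenberg, $\mathrm{BMO}$ functions on $\mathbb R^n$ have a.e.\ Lebesgue points, and by Fubini the slices $u\mapsto g(x,u)$ then have a Lebesgue point at $u=t$ for a.e.\ $(x,t)$. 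Fixing such a point, I would subtract the Lebesgue value $g(x,t)$ inside the integral: the term carrying $g(x,t)$ involves $\int_{\mathbb R}(|s-u|^{-1/2}-|t-u|^{-1/2})\,\d u$, which vanishes identically by the reflection symmetry $u\mapsto s+t-u$ of the difference kernel, and the remaining term, with $g(x,u)-g(x,t)$, is estimated by splitting the integral at $|u-t|\sim|s-t|$, using the Lebesgue-point decay on the near part and a dyadic decomposition controlled by the $\mathrm{BMO}$ norm on the far part; the total comes out $o(|s-t|^{1/2})$.

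For the spatial piece one wants $\psi(y,s)-\psi(x,s)-M_{(x,t)}(y-x)=o\bigl(|x-y|+|s-t|^{1/2}\bigr)$ uniformly as $(y,s)\to(x,t)$. For each fixed $s$ one has the classical pointwise bound
\[
|\psi(y,s)-\psi(x,s)-\nabla_x\psi(x,s)\cdot(y-x)|\lesssim |x-y|\,\bigl(\mathcal M_{|x-y|}[\nabla_x\psi(\cdot,s)-\nabla_x\psi(x,s)]\bigr)(x),
\]
with $\mathcal M_\rho$ the Hardy--Littlewood maximal operator restricted to scale $\rho$, together with $|\nabla_x\psi(x,s)-M_{(x,t)}|=|\nabla_x\psi(x,s)-\nabla_x\psi(x,t)|$. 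Since $\nabla_x\psi\in L^\infty(\mathbb R^n)$, a.e.\ $(x,t)$ is a Lebesgue point of $\nabla_x\psi$ with respect to parabolic cylinders; integrating the two displayed quantities over $s\in(t-r^2,t+r^2)$ and using this Lebesgue-point property, their averages over such intervals tend to $0$ as $r\to0$ for a.e.\ $(x,t)$, and a Chebyshev/Vitali covering argument — discarding, using the uniform Lipschitz bound, the small set of exceptional times at each scale — upgrades this to the required uniform $o(|x-y|+|s-t|^{1/2})$ bound. An alternative, more symmetric, route is to invoke the characterisation of regular Lip(1,1/2) graphs as exactly those parabolic ADR graphs whose geometric square function $\nu_\psi$ of Definition~\ref{def1.UR} is a Carleson measure (see the remark following Definition~\ref{goodgraph.def}), deduce that $\int_0^1\gamma_\psi(x,t,r)^2\,\d r/r<\infty$ for a.e.\ $(x,t)$, telescope the best-approximating $t$-independent affine maps over dyadic scales to produce $M_{(x,t)}$ as an $L^2$-averaged differential, and upgrade the $L^2$-averaged statement to pointwise little-$o$ differentiability via the Lip(1,1/2) bound (a parabolically Lipschitz function that is $L^2$-differentiable at a point is differentiable there in the little-$o$ sense).

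The main obstacle, shared by both routes, is the coupling of scales and of the space and time variables: in the parabolically balanced regime $|x-y|\sim|s-t|^{1/2}$ the spatial increment $\psi(y,s)-\psi(x,s)$ is of size $|x-y|$, comparable to $|s-t|^{1/2}$, hence not automatically negligible, so one genuinely needs the spatial linear approximation to hold uniformly over all times $s$ in the parabolic window $|s-t|\lesssim|x-y|^2$. Plain Rademacher gives this only for fixed $s$ with exceptional sets depending on $s$, and the heart of the argument — exactly the point where regular Lip(1,1/2) is needed and ordinary Lip(1,1/2) fails (cf.\ the examples in \cite{LS,KW}) — is to reconcile these exceptional sets across a continuum of times, which is what the $\mathrm{BMO}$ control of $D_{1/2}^t\psi$ (equivalently the Carleson condition on $\nu_\psi$) provides. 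This is essentially the content of Theorem~3.10 in \cite{Orp}.
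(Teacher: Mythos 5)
Your proposal cannot really be compared against a proof in the paper, because the paper does not prove Theorem \ref{rademacher}: it quotes it from Theorem 3.10 in \cite{Orp}, whose argument runs through Dorronsoro-type square function estimates --- essentially the ``alternative route'' you mention in one sentence at the end. Judged on its own terms, your primary route has a genuine gap exactly at the point you yourself flag as the main obstacle, and the sketch does not overcome it. For the spatial piece you need, at a.e.\ $(x,t)$, smallness of the scale-$r$ spatial linearization error \emph{uniformly over all $s$ with $|s-t|\le r^2$}, and you propose to get it from slicewise Lebesgue differentiation of $\nabla_x\psi$ plus a Chebyshev/Vitali upgrade. Two problems. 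First, the pointwise bound $|\psi(y,s)-\psi(x,s)-\nabla_x\psi(x,s)\cdot(y-x)|\lesssim |x-y|\,\mathcal M_{|x-y|}[\nabla_x\psi(\cdot,s)-\nabla_x\psi(x,s)](x)$ is not valid for a single $y$ (the gradient oscillation can be concentrated in a thin tube along the segment from $x$ to $y$ and be invisible to averages centered at $x$); the correct statements are averaged in $y$ as well, which is repairable. Second, and decisively, after averaging in $y$ and $s$ the Chebyshev argument needs a diagonal statement of the form: for a.e.\ $(x,t)$,
\[
\frac{1}{2r^2}\int_{t-r^2}^{t+r^2}\ \sup_{0<\rho\le r}\ \frac{1}{|B(x,\rho)|}\int_{B(x,\rho)}\bigl|\nabla_x\psi(z,s)-\nabla_x\psi(x,s)\bigr|\,\d z\,\d s\ \longrightarrow\ 0\qquad (r\to0),
\]
with the same $r$ in the time window and in the spatial scale. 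This does not follow from Fubini plus Lebesgue differentiation: the exceptional set of the fixed-time statement depends on $s$, and dominated convergence only gives $L^1_{\rm loc}$ smallness of such maximal quantities, not smallness of their averages over windows shrinking at the coupled parabolic rate. If this step worked using only the Lipschitz structure, the spatial half of the theorem would hold for arbitrary Lip(1,1/2) functions, while constructions in the spirit of \cite{KW,LS} --- gradient patterns at spatial scale $r$ that get reshuffled every time interval of length $r^2$, which is exactly what Lip(1,1/2) alone permits --- indicate that the hypothesis $D_{1/2}^t\psi\in BMO$ must enter the spatial part too; your sketch never brings it to bear there, so the ``continuum of times'' obstacle is named but not resolved.

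There are also gaps in the temporal piece. The inversion formula $\psi(x,s)-\psi(x,t)=\hat c\int(|s-u|^{-1/2}-|t-u|^{-1/2})g(x,u)\,\d u$ with $g=D_{1/2}^t\psi$ needs justification: for a Lip(1,1/2) function the defining integral \eqref{1.8} is only conditionally/distributionally meaningful, and the inversion holds a priori modulo constants and convergence issues. Moreover, your far-field dyadic estimate invokes ``the BMO norm'' of the one-dimensional slice $u\mapsto g(x,u)$, but parabolic BMO of $g$ on $\mathbb R^{n}$ does not give BMO of a.e.\ slice; the needed growth of $\int_{|u-t|\le R}|g(x,u)-g(x,t)|\,\d u$ in $R$ has to be extracted for a.e.\ $x$ by a separate Fubini/Borel--Cantelli argument rather than quoted. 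None of this is fatal to the temporal part, but as written these are gaps. The route that actually works --- and the one behind the result the paper cites --- is the one you relegate to a sentence: regular Lip(1,1/2) gives the Carleson condition on $\nu_\psi$ (Remark \ref{reggraph}, \cite{HLN1}), hence $\int_0^1\gamma_\psi(x,t,\rho)^2\,\d\rho/\rho<\infty$ for $\sigma$-a.e.\ point, and telescoping the best approximating $t$-independent planes over dyadic scales, combined with the uniform Lip(1,1/2) bound, yields pointwise parabolic differentiability a.e.; if you want a self-contained proof, that is the argument to write out in detail.
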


 \begin{remark} Note that  the linear map $M_{(x,t)}: \mathbb R^{n - 1} \to \mathbb R$ in Theorem \ref{rademacher} can be identified, for  $\d x\d t$ a.e. $(x,t) \in \mathbb R^{n - 1} \times \mathbb R$, with
 $\nabla_x\psi(x,t)$ in the sense that $M_{(x,t)}(y - x)=\nabla_x\psi(x,t)\cdot(y-x)$.
 \end{remark}

\begin{definition}\label{ddeffa} We introduce, for $\d x\d t$ a.e. $(x,t) \in \mathbb R^{n - 1} \times \mathbb R$,
\begin{align*}
\mathbf{n}(X,t):=(\nabla_x\psi(x,t),1)/\sqrt{1+|\nabla_x\psi(x,t)|^2}.
\end{align*}
\end{definition}

\begin{remark}\label{remtang}
Consider a {regular} Lip(1,1/2) graph $\Sigma = \{(x, \psi(x,t), t): (x,t) \in \mathbb{R}^{n-1} \times \mathbb{R}\}$. Then Theorem \ref{rademacher} implies that there exists, for $\sigma$-a.e. $(X,t)=(x, \psi(x,t), t)\in\Sigma$, a time-independent hyperplane
    $ T (X,t)$, with unit normal $\mathbf{n}(X,t)$, such that
            \begin{align} \label{3.16tang}  \lim_{\rho \rar 0} \frac{ H ( T (X,t ) \cap C ( X,t, \rho ), \Delta(X,t,\rho))}{ \rho} =  0,
            \end{align}
      where $H$  denotes parabolic Hausdorff distance, see \eqref{distb}.
      \end{remark}

\subsection{Parabolic measure for linear parabolic operators in divergence form}\label{pmeasure} We here consider linear operators  \begin{eqnarray}\label{e-kolm-nd}
   \H=\H_A:=\partial_t-\nabla_X\cdot(A(X,t)\nabla_X),
    \end{eqnarray}
    in $\mathbb R^{n+1}$, $n\geq 1$. We assume that $A=A(X,t)=\{A_{ij}(X,t)\}_{i,j=1}^{n}$ is a real-valued, $n\times n$-dimensional, symmetric matrix
    satisfying
    \begin{eqnarray}\label{eq2}
      \kappa^{-1}|\xi|^2\leq \sum_{i,j=1}^{n}A_{ij}(X,t)\xi_i\xi_j,\quad \ \ |A(X,t)\xi\cdot\zeta|\leq \kappa|\xi||\zeta|,
    \end{eqnarray}
    for some $\kappa\in [1,\infty)$, and for all $\xi,\zeta\in \mathbb R^{n}$, $(X,t)\in\mathbb R^{n+1}$. We refer to $\kappa$ as the  constant of $A$. Assuming that $\Omega\subset\mathbb R^{n+1}$ is a (unbounded)  Lip(1,1/2)   graph domain  with constant $b_1$,  it follows, for $\varphi\in
C_0(\partial\Omega)$ given, that there exists
a  unique  weak solution  $u=u_\varphi$, $u\in C(\bar \Omega)$, to the Dirichlet problem
\begin{equation} \label{e-bvpuu}
\begin{cases}
	\H u = 0  &\text{in} \ \Omega, \\
      u = \varphi  & \text{on} \ \partial \Omega.
\end{cases}
\end{equation}
Furthermore, there exists, for every $(X,t)\in \Omega$, a unique probability
measure  $\omega(X,t,\cdot)$ on $\partial\Omega$ such that
\begin{eqnarray}  \label{1.1xxuu}
u(X,t)=\iint_{\partial\Omega}\varphi(\tilde X,\tilde t)\d \omega(X,t,\tilde X,\tilde t).
\end{eqnarray}
The measure $\omega(X,t,E)$ is referred to as the parabolic measure associated to $\H$ in $\Omega$, at $(X, t)\in \Omega$, and of $E\subset\partial\Omega$. In the case of the heat operator, we refer to $\omega(X,t,\cdot)$ as the caloric measure. Properties of $\omega(X,t,\cdot)$ govern the Dirichlet problem in \eqref{e-bvpuu}.

The Dirichlet problem, parabolic measure, and the boundary behaviour of non-negative solutions,  for linear uniformly parabolic equations with space and time-dependent coefficients, read $\H$, in Lip(1,1/2) domains have been studied intensively over the years.
Results include Carleson type estimates, the relation between the associate parabolic measure and the Green function, the backward in time Harnack inequality, the doubling of parabolic measure, boundary Harnack principles (local and global) and  H\"older continuity up to the boundary of quotients of non-negative solutions vanishing on the lateral boundary, we refer to \cite{FGSit,FGS,FS,FSY,N} for details. In particular, we refer to \cite{N} for the proof of the following theorem.

\begin{theorem}\label{dub} Assume that $\Omega\subset\mathbb R^{n+1}$ is a (unbounded)  Lip(1,1/2)   graph domain  with constant $b_1$.  Assume that  $A$ satisfies \eqref{eq2} with constant $\kappa$.  Then there exist constants $c_1=c_1(n,b_1)$ and  $c_2=c_2(n,\kappa, b_1)$,  $1\leq c_1,\ c_2<\infty$, such that
if $(X_0,t_0)\in\partial\Omega$, $r_0>0$, $(X,t)\in \partial\Omega$, $r>0$, $\Delta(X,t,r)\subset \Delta(X_0,t_0,r_0/c_1)$, then
\begin{eqnarray*}
\omega(A_{r_0}^+(X_0,t_0), \Delta(X,t,2r))\leq c_2 \omega(A_{r_0}^+(X_0,t_0), \Delta(X,t,r)).
\end{eqnarray*}
\end{theorem}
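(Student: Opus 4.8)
The plan is to derive the doubling property from the standard potential‑theoretic toolkit for $\H$ on the Lip(1,1/2) domain $\Omega$: the Carleson estimate for non‑negative solutions vanishing on a boundary portion, the interior Harnack inequality together with Harnack chains, the comparison between parabolic measure and the Green function $G(\cdot\,;\cdot)$ of $\H$ in $\Omega$, and the backward‑in‑time Harnack inequality, all of which are collected in \cite{FGSit,FGS,FS,FSY,N}. I keep the notation $(X,t)$, $r$, $r_0$ of the statement and abbreviate $P:=A_{r_0}^+(X_0,t_0)$. Since $\Delta(X,t,r)\subset\Delta(X_0,t_0,r_0/c_1)$ forces both $r$ and $\dist(X,t,X_0,t_0)$ to be $\lesssim r_0/c_1$, one has $\Delta(X,t,2r)\subset\Delta(X_0,t_0,3r_0/c_1)$; hence, once $c_1=c_1(n,b_1)$ is large, $P$ lies at parabolic distance $\gtrsim r_0\gg r$ from $\Delta(X,t,2r)$ and at a time $t_0+r_0^2$ that is well ahead of $\Delta(X,t,r)$ and $\Delta(X,t,2r)$ — precisely the configuration in which parabolic measure and the Green function are comparable.

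The key reduction is the CFMS‑type comparison: for such a far, forward point $P$,
\[
\omega\bigl(P,\Delta(X,t,\rho)\bigr)\;\approx\;\rho^{\,n}\,G\bigl(P;A_\rho^+(X,t)\bigr),\qquad\rho\in\{r,2r\},
\]
with constants depending only on $n$, $\kappa$, $b_1$, where $A_\rho^+(X,t)$ is the forward reference point of \eqref{pointsref2}. Granting this, the asserted inequality reduces to
\[
G\bigl(P;A_{2r}^+(X,t)\bigr)\;\lesssim\;G\bigl(P;A_r^+(X,t)\bigr).
\]
Both $A_{2r}^+(X,t)$ and $A_r^+(X,t)$ are corkscrew points at parabolic distance $\approx r$ from $(X,t)$ and at distance $\gtrsim r$ from $\partial\Omega$, with $A_{2r}^+(X,t)$ at the later time $t+4r^2$ and $A_r^+(X,t)$ at the earlier time $t+r^2$. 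Since $G(P;\cdot)$ solves the adjoint equation of $\H$ in its second slot, moving from the later time to the earlier time along a Harnack chain of bounded length that stays at distance $\gtrsim r$ from $\partial\Omega$ is the \emph{good} direction for that equation, and the displayed inequality follows from the interior Harnack inequality. The theorem thus reduces to establishing the comparison $\omega\approx\rho^n G(P;A_\rho^+)$.

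To establish it I would work in the region $\Omega\setminus C(X,t,2\rho)$ and compare the two $\H$‑solutions $Z\mapsto\omega(Z,\Delta(X,t,\rho))$ and $Z\mapsto\rho^n G(Z;A_\rho^+(X,t))$, which both vanish on $\partial\Omega\setminus\Delta(X,t,\rho)$ (the Green function on the whole lateral boundary). By the maximum principle it suffices to compare the two solutions on the interior portion $\partial C(X,t,2\rho)\cap\Omega$, after which the bound at $P\in\Omega\setminus C(X,t,2\rho)$ is immediate. On that surface the upper bound for $\omega$ is supplied by the Carleson estimate; the lower bound is the delicate point, since $\omega(\cdot\,,\Delta(X,t,\rho))$ vanishes at points of the surface lying before $\Delta(X,t,\rho)$ in time. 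Here one starts from $\omega(A_\rho^+(X,t),\Delta(X,t,\rho))\gtrsim 1$ (interior Harnack and comparison with a half‑space) and uses the backward‑in‑time Harnack inequality for non‑negative solutions vanishing on the lateral boundary to spread this positivity over the full surface $\partial C(X,t,2\rho)\cap\Omega$; this is the step where the Lip(1,1/2) structure of $\partial\Omega$ genuinely enters, through the Carleson estimate and the boundary Harnack principle underlying the backward Harnack inequality.

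The main obstacle is exactly this time lag. In the elliptic analogue the doubling statement has a short Harnack‑chain proof, but in the parabolic setting a solution like $\omega(\cdot\,,\Delta(X,t,\rho))$ simply vanishes at interior points that precede $\Delta(X,t,\rho)$ in time, so no naive comparison across a fixed surface is available. The remedy is to place the pole at the \emph{forward} reference point $A_\rho^+$ in the Green‑function comparison — putting it in the correct time position relative to $\Delta(X,t,\rho)$ — and to invoke the backward‑in‑time Harnack inequality, which is the one genuinely deep analytic input; it is through this inequality that the dependence of $c_2$ on the ellipticity constant $\kappa$ (as well as on $n$ and $b_1$) enters, while $c_1$ depends only on $n$ and $b_1$.
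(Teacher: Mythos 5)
The paper does not prove Theorem \ref{dub} at all: it is quoted from the literature, with the proof attributed to \cite{N} (and the surrounding machinery to \cite{FGSit,FGS,FS,FSY}). Your plan is precisely a reconstruction of that standard argument — reduction to a CFMS‑type comparison $\omega(P,\Delta(X,t,\rho))\approx\rho^{n}G(P;A_\rho^{+}(X,t))$ at the scales $\rho=r,2r$, followed by an adjoint (time‑reversed) Harnack chain between $A_{2r}^{+}(X,t)$ and $A_{r}^{+}(X,t)$ — and the identification of the backward‑in‑time Harnack inequality as the deep input, and of the source of the $\kappa$‑dependence of $c_2$, is correct. The final reduction step is sound: $G(P;\cdot,\cdot)$ is a nonnegative adjoint solution away from $P$, the two reference points lie at distance $\approx r$ from $\partial\Omega$ with spatial gap $4b_1r$ and time gap $3r^2$, so the adjoint Harnack inequality gives $G(P;A_{2r}^{+}(X,t))\lesssim G(P;A_{r}^{+}(X,t))$ with constant depending only on $n,\kappa,b_1$.

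One step, as literally written, would not go through: the maximum‑principle comparison of $Z\mapsto\omega(Z,\Delta(X,t,\rho))$ and $Z\mapsto\rho^{n}G(Z;A_\rho^{+}(X,t))$ across the single surface $\partial C(X,t,2\rho)\cap\Omega$ with the pole already placed at the forward point. For the bound $\omega\lesssim\rho^{n}G(\cdot;A_\rho^{+})$ there are surface points with time coordinate in $(t-\rho^2,\,t+\rho^2]$ at which $\omega(\cdot,\Delta(X,t,\rho))>0$ while $G(\cdot;A_\rho^{+})\equiv 0$ (the pole time $t+\rho^2$ lies ahead of them), so no pointwise majorization on that surface is possible; and ``spreading positivity of $\omega(\cdot,\Delta(X,t,\rho))$ over the full surface'' by backward Harnack is also impossible, since $\omega(\cdot,\Delta(X,t,\rho))$ vanishes identically on the early‑time portion of the surface (backward Harnack needs a solution vanishing on the lateral boundary, and it cannot produce positivity where the solution is zero). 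The standard arrangement, which is the one in \cite{FGS,N}, is: (i) prove the easy direction $\rho^{n}G(P;A_\rho^{+})\lesssim\omega(P,\Delta(X,t,\rho))$ by a maximum‑principle comparison on the boundary of a small cylinder around the pole, where $\rho^{n}G\lesssim 1$ by the Green function size estimate and $\omega\gtrsim 1$ by a barrier/Harnack argument, using that $G$ vanishes on $\partial\Omega$; (ii) prove the upper bound $\omega(P,\Delta(X,t,\rho))\lesssim\rho^{n}G(P;\cdot)$ with the pole at a \emph{time‑lagged} point, at time about $t-2\rho^2$, where the corresponding comparison is legitimate; and (iii) only then invoke the backward‑in‑time Harnack inequality for the adjoint solution $G(P;\cdot,\cdot)$ (which vanishes on the lateral boundary away from $P$) to move the pole forward in time to $A_\rho^{+}(X,t)$ and obtain the two‑sided comparison with a single pole. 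With that repair — which uses exactly the ingredients you already list — your argument is the proof the paper is pointing to.
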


The notion of regular Lip(1,1/2) graphs, or parabolic uniform rectifiable graphs, is deeply rooted in the study of the Dirichlet problem for the heat equation in time-varying (graph) domains, and the solvability of the $L^p$-Dirichlet problem for the heat equation is intimately connected to quantitative mutual absolute continuity of  the caloric measure with respect to the surface measure. In particular, while one can prove that there are Lip(1,1/2) graph domains for which the caloric measure and the surface measure are mutually singular, in  \cite{LS,LM} it is proved that for regular Lip(1,1/2) graph domains, the caloric measure and the surface measure are quantitatively related in the sense that they are mutual absolutely continuous, and the associated parabolic Poisson kernel satisfies a scale-invariant reverse H{\"o}lder inequality in $L^p$ for some $p\in (1,\infty)$: the $A_\infty$-property of caloric measure. The importance of regular Lip(1,1/2) graph domains, from the perspective of parabolic singular integrals, layer potentials and boundary value problems, is emphasized through the works in \cite{LS,LM,H,HL}. In particular, in \cite{HL}
the solvability of the $L^2$-Dirichlet
problem (and of the $L^2$-Neumann and $L^2$-regularity problems)  for the heat equation was obtained using layer potentials in the region above a regular Lip(1,1/2) graph under the restriction that 1/2-order time derivative (measured in BMO) of the function defining the graph is small. This smallness is sharp in the
sense that there are regular Lip(1,1/2) graph domains for which the $L^2$-Dirichlet problem is not solvable.
On the other hand, the $L^p$-Dirichlet problem is solvable, for some $p<\infty$, for all
regular Lip(1,1/2) graph domains \cite{LM}.

The $L^p$ Dirichlet problem for operators with space and time dependent coefficients, connected to regular Lip(1,1/2) graph domains and also allowing for singular drift terms, was studied in the  influential work \cite{HL1}.  In \cite{HL1} the method of extrapolation of Carleson measure estimates was introduced, a method that was crucial in the resolution of the Kato conjecture, see \cite{AHLeMcT},\cite{AHLMcT}.  In this paper we will use some results concerning parabolic measure which emanates from \cite{HL1} but which are proved in \cite{NO}. Recall that $\delta(X,t)$ denote the parabolic distance from $(X,t)\in\Omega$ to $\partial\Omega$. Consider the following measures, $\mu_1$ and $\mu_2$, defined on $\Omega$,
\begin{equation}\label{measure1}
\begin{split}
\d\mu_1(X,t)&:=|\nabla_XA(X,t)|^2\delta(X,t)\ \d X\d t,\\
\d\mu_2(X,t)&:=|\partial_tA(X,t)|^2\delta^3(X,t)\ \d X\d t.
\end{split}
\end{equation}
We say that $\mu_1$ and $\mu_2$ are Carleson measures on $\Omega$ with constant $\Upsilon$ if
\begin{align}\label{measure2}
r^{-(n+1)}\iiint_{C(X,t,r)}\d\mu_1(Y,s)&\leq\Upsilon,\notag\\
r^{-(n+1)}\iiint_{C(X,t,r)}\d\mu_2(Y,s)&\leq\Upsilon,
\end{align}
for all $(X,t)\in\partial\Omega$, $r>0$. We refer to \cite{NO} for the following result.

\begin{theorem}\label{Ainfty} Assume that  $\Omega\subset\mathbb R^{n+1}$ is a (unbounded)  regular Lip(1,1/2)   graph domain with constants $(b_1,b_2)$. Assume that $A$ satisfies \eqref{eq2} with constant $\kappa$. Assume that the measures $\mu_1$ and $\mu_2$ defined in \eqref{measure1} are Carleson measures on $\Omega$ with constant $\Upsilon$ in the sense of \eqref{measure2}. Then there exist constants $c_1=c_1(n,b_1)$ and  $c_2=c_2(n,\kappa,b_1,b_2,\Upsilon)$,  $1\leq c_1,\ c_2<\infty$, and
   $\eta=\eta(n,\kappa,b_1,M_2,\Upsilon)$, $0<\eta<1$, such that the following is true. If  $(X_0,t_0)\in\partial\Omega$, $r_0>0$, $(X,t)\in \partial\Omega$, $r>0$, $\Delta(X,t,r)\subset \Delta(X_0,t_0,r_0/c_1)$, then

 \begin{eqnarray*}
 \quad c_2^{-1}\biggl (\frac{ \sigma ( E ) }{ \sigma(\Delta(X,t,r))}\biggr )^{1/\eta}\leq \frac {\omega\bigl (E\bigr )}{\omega\bigl ( \Delta(X,t,r)\bigr )}\leq c_2\biggl (\frac{ \sigma ( E ) }{ \sigma(\Delta(X,t,r))}\biggr )^\eta,
\end{eqnarray*}
whenever $E\subset \Delta(X,t,r)$.
\end{theorem}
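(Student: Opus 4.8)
The plan is to reduce Theorem~\ref{Ainfty} to the analogous statement on the model half-space $\mathbb R^{n+1}_+:=\{(x,x_n,t)\in\mathbb R^{n-1}\times\mathbb R\times\mathbb R:x_n>0\}$ by a parabolic flattening of the boundary, and then to establish the half-space case through the method of extrapolation of Carleson measures, as developed in \cite{HL1} (see also \cite{AHLeMcT}). In the end this is precisely the result proved in \cite{NO}, and the sketch below records the structure of that argument.

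First I would flatten $\Omega=\Omega_\psi$ to $\mathbb R^{n+1}_+$ by a change of variables $\Phi:\mathbb R^{n+1}_+\to\Omega$ that is bi-Lipschitz in the parabolic metric, of Dahlberg--Kenig--Stein type, but regularized in time at the half-derivative scale: instead of $(x,x_n,t)\mapsto(x,x_n+\psi(x,t),t)$ one replaces $\psi$ by a mollification against a parabolic approximate identity whose width in $(x,t)$ is comparable to the height $x_n$ (equivalently, a Poisson-type extension built from the multiplier $|\tau|^{1/2}$). Such a $\Phi$ transforms $\H_A$ into a divergence-form parabolic operator $\H_{\tilde A}$ on $\mathbb R^{n+1}_+$ with $\tilde A$ again symmetric and uniformly parabolic (with constant depending on $\kappa$ and $b_1$), it sends the surface measure on $\{x_n=0\}$ to a measure comparable to $\sigma$ and the height $x_n$ to a quantity comparable to $\delta$, and it pushes parabolic measure for $\H_A$ forward to parabolic measure for $\H_{\tilde A}$. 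The crucial point is that $\tilde A$ inherits the Kenig--Pipher-type Carleson bounds, namely $|\nabla_X\tilde A(X,t)|^2 x_n\,\d X\d t$ and $|\partial_t\tilde A(X,t)|^2 x_n^3\,\d X\d t$ are Carleson on $\mathbb R^{n+1}_+$ with Carleson norm controlled by $\Upsilon$, $b_1$ and $b_2$; the error terms generated by $\Phi$ are of exactly this form precisely because $\psi$ is Lipschitz in $x$ and $D_{1/2}^t\psi\in BMO(\mathbb R^n)$ with $\|D_{1/2}^t\psi\|_*\le b_2$, which is where the regular Lip(1,1/2) hypothesis enters.

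Next, on $\mathbb R^{n+1}_+$ I would run the extrapolation of Carleson measures argument. To prove $\omega_{\H_{\tilde A}}\in A_\infty$ with respect to Lebesgue measure on $\{x_n=0\}$ it suffices to establish a Carleson-measure square-function estimate for bounded solutions of $\H_{\tilde A}u=0$; this is obtained by a stopping-time decomposition of the dyadic Whitney/Carleson regions relative to the Carleson measure $\d\mu:=\bigl(|\nabla_X\tilde A|^2 x_n+|\partial_t\tilde A|^2 x_n^3\bigr)\,\d X\d t$. On the stopping regions, where the accumulated mass of $\mu$ is small, one compares $\H_{\tilde A}$ with a frozen operator (constant coefficients, or $t$-independent) for which the square-function bound and $A_\infty$ are classical, controls the difference by the small Carleson mass, and sums over the stopping tree using that the total mass of $\mu$ on each Carleson box is $\lesssim\Upsilon$. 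Combining this with the doubling property of parabolic measure from Theorem~\ref{dub} upgrades the resulting reverse-H\"older/weak-$A_\infty$ statement to the full two-sided quantitative $A_\infty$ comparison. Finally, since $\Phi$ is bi-Lipschitz in the parabolic metric and maps surface balls to surface balls and parabolic measure to parabolic measure up to constants, this comparison transfers back from $\mathbb R^{n+1}_+$ to $\Omega$ with constants depending only on $n$, $\kappa$, $b_1$, $b_2$ and $\Upsilon$.

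The main obstacle is the flattening step. The naive graph map produces coefficients that are merely bounded, with no Carleson control, which is useless for the extrapolation machinery; one must use the time-regularized change of variables, and verifying that the resulting coefficients satisfy the Carleson bounds above requires delicate Littlewood--Paley/square-function estimates in the parabolic metric, with $\|D_{1/2}^t\psi\|_*\le b_2$ entering essentially. A secondary difficulty, relative to the elliptic case, is the lack of a symmetric time structure: the stopping-time and comparison arguments need the backward-in-time Harnack inequality and the doubling property of Theorem~\ref{dub} in order to be carried out on $\Omega$ and on its sawtooth subdomains.
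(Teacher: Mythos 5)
There is no proof of Theorem \ref{Ainfty} in the paper for your sketch to be measured against: the result is imported verbatim, with the paper explicitly referring to \cite{NO} (which in turn builds on the extrapolation-of-Carleson-measures method of \cite{HL1}) for its proof. Your outline is a reasonable summary of the strategy of those references: a time-regularized Dahlberg--Kenig--Stein type pull-back to the half-space, with the regular Lip(1,1/2) hypothesis ($\|D_{1/2}^t\psi\|_*\le b_2$) used to show the pulled-back data satisfy Carleson conditions, followed by the extrapolation/stopping-time argument and the doubling property to obtain the quantitative $A_\infty$ comparison. One point is stated inaccurately, though: the regularized pull-back does not turn $\H_A$ into a plain symmetric divergence-form operator with Carleson-controlled $|\nabla_X\tilde A|^2x_n$ and $|\partial_t\tilde A|^2x_n^3$; the $t$-dependence of the mollified graph function generates first-order terms, so the half-space theory one must invoke is that of parabolic operators with \emph{singular drift terms} satisfying Carleson measure conditions --- this is exactly the setting of \cite{HL1} and \cite{NO} --- and the extrapolation step has to be carried out for such operators. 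With that correction your proposal describes the cited proof faithfully, but it remains a scheme: the substantive work (the Littlewood--Paley estimates verifying the Carleson bounds for the pulled-back coefficients/drift, and the extrapolation lemma itself) is deferred to the literature, just as the paper itself defers the whole theorem to \cite{NO}.
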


\begin{remark}
Theorem \ref{Ainfty} is one of several equivalent formulations of  the $A_\infty$-property for parabolic measure with respect to the surface measure $\sigma$, see \cite{N,NO}.
\end{remark}

\section{Estimates for the evolutionary $p$-Laplace equation}\label{Pest}
In this section we focus on estimates for the evolutionary $p$-Laplace equation and we derive estimates based on the assumptions that $|\nabla_Xu|> 0$.
The Schauder type results we derive are well known to the experts, but we believe that the proofs and the brief introduction to the regularity theory for the
evolutionary $p$-Laplace equation can serve  the interested reader. Due to the lack of homogeneity of the evolutionary $p$-Laplace equation when $p\neq 2$, a crucial ingredient in the regularity theory for this equation is the use of DiBenedetto's intrinsic geometry when deriving local estimates. This amounts to using cylinders whose size depends on the solution itself.

\subsection{Notation} Given $\O\subset\mathbb R^{n+1}$ and a function $f:\O \to\er^m$, $m\geq 1$, we let
$$\osc_{\O} f:=\sup\limits_{(X_0,t_0),(X,t)\in \O}|f(X_0,t_0)-f(X,t)|$$
denote the oscillation of $f$ on $\O$. Given $(X,t)\in \mathbb R^{n+1}$ and $r,\lambda>0$, we introduce the space-time cylinders
\begin{align}\label{eq:gen int cyl +}
 Q_r^{\lambda}(X,t)&:=B(X, r) \times (t- \lambda^{2-p} r^p,t +  \lambda^{2-p} r^p).
\end{align}
In a context where the dependence on $(X,t)$ is not important we will often write $Q_{r}^{\lambda}$ for $ Q_{r}^{\lambda}(X,t)$. Furthermore, we let $\omega:\er_+ \mapsto \er_+$ be a concave modulus of continuity, i.e.,   a concave nondecreasing function
such that $\omega(1) = 1$ and $\omega(0):=\lim_{r \downarrow 0}\omega(r) = 0$. Concavity of $\omega(\cdot)$ implies that
\begin{equation}\label{eq:omega 1}
 \frac{\omega(r)}{r}\leq \frac{\omega(\varrho)}{\varrho}\quad \mbox{ whenever $0<\varrho < r$}.
\end{equation}
Furthermore, \begin{equation}\label{eq:int cyl rel 2}
    \left(\varrho/r \right)^{(p-2)/p} Q_{\varrho}^{\lambda \omega(\varrho)} \subset Q_{\varrho}^{\lambda \omega(r)} \qquad  \mbox{for every} \  \varrho \in (0,r)\,.
\end{equation}
A generic example is $\omega(r)=\omega_\alpha(r)=r^\alpha$, $\alpha\in (0,1]$. Following \cite{KMN1}, given a function $f$ defined on $\O = U\times (t_1, t_2) \subset \er^n \times \er$, we set
\eqn{funcspac}
$$
\left[f\right]_{\widetilde C^{\omega(\cdot)}(\O)}:= \inf \left\{ \lambda > 0 \ : \ \sup\limits_{Q_{r}^{\lambda \omega(r)} \subset \er^n \times \er} \left(\frac1{\lambda \omega(r)}\osc_{Q_{r}^{\lambda \omega(r)} \cap \, \O} f \right) \leq 1 \right\}\,.
$$
Moreover, we let $C^0(\O)$ denote the set of functions which are continuous on $\O$. We note that in the special case $\omega(r)=r^\alpha$, $\alpha \in (0,1]$, then the definition in \eqref{funcspac} reduces to a notion of  H\"older continuity,
\begin{equation*}\label{eq:holder}
    \omega(r)=r^\alpha\,, \quad \alpha \in (0,1]\,, \quad \left[f\right]_{\widetilde C^{\omega(\cdot)}(\O)} < \infty \quad \Longleftrightarrow \quad
    \sup_{(X_1,t_1), (X_2,t_2) \in\O}\frac{|f(X_1,t_1)- f(X_2,t_2)|}{\|(X_1,t_1)-(X_2,t_2)\|_{\alpha}^\alpha} < \infty\,,
\end{equation*}
where
\begin{align}\label{intrim}
\|(X_1,t_1)-(X_2,t_2)\|_{\alpha} := \max\left\{|X_1-X_2|,|t_1-t_2|^{1/[p-\alpha(p-2)]}\right\} \,.
\end{align}
In particular, this metric is depending on the degree of regularity considered. Note also that when $p=2$, then the space introduced coincides with the space of functions which are  H\"older continuous of order $\alpha$  with respect to the standard parabolic metric.

\subsection{Intrinsic geometry: the philosophy} The lack of homogeneity of the evolutionary $p$-Laplace equation results in the lack of {homogeneous} a priori estimates, and the impossibility of using such estimates in iterative schemes in line with the standard regularity techniques. Instead, the lack of homogeneity must be locally corrected by using intrinsic geometries, and the philosophy is that the type of cylinders used must depend on the type of regularity one is proving.  To give an illustrative example following \cite{KMN1}, let us discuss the type of geometry used in the case one is interested in proving gradient regularity starting from higher integrability of the gradient, see  \cite{DBF,DB,KL}. In this case the relevant cylinders are
\begin{align}\label{geo1}
Q_{r}^{\lambda\omega(r)}(X_0,t_0):= B(X_0, r) \times (t_0- \lambda^{2-p}(\omega(r))^{2-p} r^p,t_0+\lambda^{2-p}(\omega(r))^{2-p} r^p),
\end{align}
where
\begin{align}\label{geo1+}
\left(\bariiint_{Q_{r}^{\lambda \omega(r)}(X_0,t_0)} |\nabla_Xu|^{p}\, \d X \, \d t \right)^{1/p} \approx \lambda \mbox{ and }\omega(r)=r\,.
\end{align}
The first relation encodes the fact that on $Q_{r}^{\lambda r}(X_0,t_0)$ we have, in an integral sense, $|\nabla_Xu|\approx \lambda $. We now proceed heuristic as follows. On $Q_{r}^{\lambda r}(X_0,t_0)$ we formally identify
$$
\partial_tu - \nabla_X\cdot(|\nabla_Xu|^{p-2}\nabla_Xu) \approx u_t - \lambda^{p-2}\Delta_X u\,.
$$
Therefore, with this heuristics $v(X,t):= u(X_0+rX, t_0+ \lambda^{2-p}r^2t)$ solves the heat equation $\partial_t v -\Delta_X v=0$ in $B(0,1) \times (-1,1)$ and {homogenous} estimates can be derived which are suitable for regularity procedures. To make this rough argument rigorous is far from being trivial, but the point is that for  this procedure to work, along the iteration,  {the gradient must remain bounded}. In other words, {the type of intrinsic geometry considered depends on the kind of regularity one is proving.} For the same reason,  when proving regularity results for $u$, see for instance \cite{DB, DGV1-, DGV1, K}, one is led to use the geometry dictated by the cylinders in \eqref{geo1} but now with $\omega(r)$ and $\lambda$ satisfying
\begin{align}\label{geo2}
\osc_{Q_{r}^{\lambda\omega(r)}(X_0,t_0)}\, u\approx \lambda \omega(r)\mbox{ and }\omega(r)\equiv 1\,.
 \end{align}
  An observation is that the two geometries considered in \eqref{geo1+} and \eqref{geo2} are two particular, actually extremal, cases of a class of intermediate/interpolative intrinsic geometries, suited to the regularity one want to prove. In particular, if we let $\omega(r)=r^\alpha$, $\alpha\in (0,1$], then the geometries in  \eqref{geo1+} and \eqref{geo2} can be seen to contain the endpoint cases $\omega(r)=r$ and $\omega(r)\equiv 1$, respectively.  The limiting cases of the parabolic metric used in \eqref{intrim}
are, in the case $\alpha=1$,
$$
\|(X_1,t_1)-(X_2,t_2)\|_{1} = \max\left\{|X_1-X_2|,|t_1-t_2|^{1/2}\right\},
$$
and this the usual parabolic metric used to study the regularity of the gradient, and, when $\alpha \to 0$,
$$
\|(X_1,t_1)-(X_2,t_2)\|_{0} = \max\left\{|X_1-X_2|,|t_1-t_2|^{1/p}\right\},
$$
which is instead the metric that turns out to be relevant in the study of H\"older continuity of solutions, again see \cite{DB}. For the efficiency
of these intermediate/interpolative intrinsic geometries in the study of optimal regularity in the $p$-parabolic obstacle problem we refer to \cite{KMN1}.

\subsection{Energy and zero order estimates}

For the record we state the following Harnack estimate which can be found in \cite{DB}. For generalizations to operators of $p$-parabolic type but with only bounded and measurable coefficients we refer to ~\cite{DGV1} and ~\cite{K}.
\begin{theorem} \label{thm:harnack}
Suppose that $u$ is a nonnegative weak solution to \eqref{basic eq} in a space-time cylinder $\O$.
There  are constants $c_i=c_i(n,p)$, $i \in \{1,2\}$, such that
if
\[
B(X_0,2r) \times (t_0-c_1 u(X_0,t_0)^{2-p} r^p,t_0+c_1 u(X_0,t_0)^{2-p} r^p) \Subset \O\,,
\]
then
\begin{eqnarray*}
   && u(X_0,t_0) \leq c_2 \inf_{X\in B(X_0,r)} u\left(X,t_0+c_1 u(X_0,t_0)^{2-p} r^p\right)\,.
\end{eqnarray*}
\end{theorem}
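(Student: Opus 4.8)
This statement is classical and due to DiBenedetto (see \cite{DB}); my plan would be to recall the structure of its proof rather than reprove it from scratch. The starting point is the anisotropic scaling symmetry of \eqref{basic eq}. Because the equation is not homogeneous, one cannot rescale the amplitude of a solution independently of the time variable, but one does have that if $u$ solves \eqref{basic eq} then so does $v(X,t):=u(X_0,t_0)^{-1}u\bigl(X_0+rX,\ t_0+u(X_0,t_0)^{2-p}r^p t\bigr)$. After this normalization I may assume $(X_0,t_0)=(0,0)$, $r=1$ and $v(0,0)=1$; the hypothesis then becomes that $v$ is a nonnegative solution in $B(0,2)\times(-c_1,c_1)$, and the assertion left to prove is that there are $c_1=c_1(n,p)$ and $c_2=c_2(n,p)$ with $v(\cdot,c_1)\geq c_2^{-1}$ on $B(0,1)$.

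The core tool I would use is the \emph{expansion of positivity} for the degenerate equation: there are $\eta,\delta\in(0,1)$ depending only on $n,p$ such that whenever a nonnegative solution $w$ satisfies $w(\cdot,\bar t)\geq\sigma$ on a ball $B(y,\rho)$, then $w\geq\eta\sigma$ on the larger ball $B(y,2\rho)$ for all $t\in\bigl(\bar t+\tfrac12\delta\sigma^{2-p}\rho^p,\ \bar t+\delta\sigma^{2-p}\rho^p\bigr)$. The intrinsic waiting time $\delta\sigma^{2-p}\rho^p$ is unavoidable here: for $p>2$ positivity propagates with finite speed, so one must wait a definite, amplitude-dependent time before the positivity set has genuinely enlarged. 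I would prove this step by combining the Caccioppoli/energy inequalities for \eqref{basic eq} with a De Giorgi iteration on the level sets $\{w<k\sigma\}$ together with an explicit comparison subsolution, using the scaling symmetry above to reduce to $\sigma=\rho=1$, which is what keeps the constants from degenerating as $\sigma\to0$ or $\sigma\to\infty$. As a preliminary ingredient I would establish a measure lemma: $v(0,0)=1$ forces, via the local sup- and logarithmic estimates, the density of $\{v(\cdot,\tau)>\tfrac12\}$ in $B(0,1)$ to be bounded below on a definite time interval, and a De Giorgi shrinking lemma then upgrades this to a genuine sub-ball $B(\bar X,\bar\rho)$, at some time $\bar t$ near $0$, on which $v\geq\sigma_0$ for constants $\sigma_0,\bar\rho$ depending only on $n,p$.

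With these ingredients the theorem would follow by iterating the expansion of positivity a fixed number $k=k(n,p)$ of times starting from $B(\bar X,\bar\rho)\times\{\bar t\}$: each application doubles the radius, at the cost of multiplying the lower bound by $\eta$ and advancing the time by an intrinsic increment, so after $k$ steps one obtains $v\geq\sigma_0\eta^k$ on $B(0,1)$ at a time which, after choosing $c_1=c_1(n,p)$ large enough, does not exceed $c_1$; setting $c_2:=(\sigma_0\eta^k)^{-1}$ and undoing the normalization gives the claim. The step I expect to be the main obstacle is the expansion of positivity, and within it the bookkeeping of the intrinsic time scales — ensuring that every constant depends only on $n$ and $p$ and not on the a priori unknown size of $u(X_0,t_0)$, which is precisely what the coupled amplitude/time rescaling is designed to control.
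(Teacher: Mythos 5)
The paper does not prove this result at all: it is quoted verbatim as a known theorem with a citation to DiBenedetto's monograph \cite{DB}, which is exactly the route you take. Your outline of the underlying intrinsic-scaling/expansion-of-positivity argument is a faithful sketch of the classical proof, so your treatment is consistent with (indeed more detailed than) the paper's.
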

The next result is a standard energy estimate applied in $Q_{r}^{\lambda \omega(r)}(X_0,t_0)$ (see \cite[Proposition 3.1, Chapter 2]{DB}), together with an $L^\infty$ bound for the solution which can be inferred from \cite[Theorem 4.1, Chapter 5]{DB}, with some small variants.
\begin{lemma} \label{lemma:energy}
Suppose that $w$ is a nonnegative weak subsolution to \eqref{basic eq} in $Q_r \equiv Q_{r}^{\lambda \omega(r)}(X_0,t_0)$.
Then there exists a constant $c=c(n,p)$ such that
\begin{eqnarray}
\nonumber
   && \iiint_{Q_{r/2}} |\nabla_Xw|^p \, \d X \d t  + \sup_{t_0-(\lambda \omega(r/2))^{2-p}(r/2)^p<t< t_0} \iint_{B(X_0,r/2)} w^2(\cdot, t)\, \d X   \\
\label{eq:energy}   &&  \qquad \leq \frac{c}{r^p} \iiint_{Q_{r}}\left[ w^p + (\omega(r)\lambda)^{p-2} w^2\right] \, \d X \d t
\end{eqnarray}
holds. Furthermore, let $\eps>0$ be a degree of freedom. Then there exists a constant $c_\eps\geq 1$, depending only on $n,p,\eps$, such that
\begin{eqnarray}
\label{eq:sup}   && \sup_{Q_{r/2}} w  \leq \eps\omega(r)\lambda  + c_\eps(\omega(r)\lambda)^{2-p} \bariiint_{Q_{r}} w^{p-1} \d X \d t\,.
\end{eqnarray}
\end{lemma}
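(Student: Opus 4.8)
The two displays are respectively the intrinsic-geometry versions of \cite[Chapter~2, Proposition~3.1]{DB} and \cite[Chapter~5, Theorem~4.1]{DB}, so the plan is to record the classical De~Giorgi--Nash--Moser argument while keeping the DiBenedetto scaling of the cylinders $Q_\rho^{\lambda\omega(r)}$ under strict control. For \eqref{eq:energy} I would fix a cutoff $\eta\in C_0^\infty(Q_r)$ with $0\le\eta\le1$, $\eta\equiv1$ on $Q_{r/2}$, vanishing on the bottom and lateral boundary of $Q_r$ but not on its top time-slice, and — this is where the geometry enters, since the time-length of $Q_\rho^{\lambda\omega(r)}$ is comparable to $(\omega(r)\lambda)^{2-p}\rho^p$ — with $|\nabla_X\eta|\lesssim r^{-1}$ and $|\partial_t\eta|\lesssim(\omega(r)\lambda)^{p-2}r^{-p}$. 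Testing the weak subsolution inequality for \eqref{basic eq} with $\phi=w\eta^p$ (after the usual Steklov-averaging in $t$, since $\partial_t w$ is not directly available), the divergence term yields $\iiint|\nabla_X w|^p\eta^p$ plus a cross term which Young's inequality absorbs into $\tfrac12\iiint|\nabla_X w|^p\eta^p+c\iiint w^p|\nabla_X\eta|^p$, while the parabolic term produces $\tfrac12\sup_t\iint w^2\eta^p-\tfrac{p}{2}\iiint w^2\eta^{p-1}\partial_t\eta$; inserting the bounds on $\nabla_X\eta,\partial_t\eta$ and removing the Steklov averages gives \eqref{eq:energy}.

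For \eqref{eq:sup} I would run a De~Giorgi iteration on the truncations $w_j:=(w-k_j)_+$, which are again nonnegative subsolutions, with levels $k_j:=\eps\omega(r)\lambda(1-2^{-j-1})$ increasing to $k_\infty=\eps\omega(r)\lambda$, over a nested family of intrinsic cylinders $Q^{(j)}$ shrinking from $Q_r$ to $Q_{r/2}$ with time-lengths again carrying the factor $(\omega(r)\lambda)^{2-p}$. Applying \eqref{eq:energy} to $w_j$ on $Q^{(j)}$, combining with the parabolic Sobolev inequality to gain the exponent $p(1+2/n)$, and estimating $|\{w_j>0\}\cap Q^{(j)}|$ by Chebyshev against $\iiint_{Q^{(j-1)}}w_{j-1}^{p-1}$, one arrives at a recursion $Y_{j+1}\le C\,b^{\,j}Y_j^{1+\beta}$ with $b=b(n,p)>1$, $\beta=\beta(n,p)>0$, where $Y_j$ is $\iiint_{Q^{(j)}}w_j^{p-1}$ rendered scale-invariant by the appropriate power of $\omega(r)\lambda$ dictated by dimensional analysis. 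The fast-geometric-convergence lemma then forces $Y_j\to0$, i.e.\ $w\le\eps\omega(r)\lambda$ on $Q_{r/2}$, provided $Y_0$ is below a threshold $\delta_0(n,p)$; the only departure from \cite[Ch.~5, Thm.~4.1]{DB} is that we deliberately stop the levels at $\eps\omega(r)\lambda$ and feed the residual super-level mass — which is non-negligible exactly when $Y_0$ exceeds $\delta_0$, and is then bounded below by a constant multiple of $(\eps\omega(r)\lambda)^{p-1}$ — back into $(\omega(r)\lambda)^{2-p}\bariiint_{Q_r}w^{p-1}$, Young's inequality and the choice of $\eps$ producing the claimed $c_\eps$.

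The main obstacle is bookkeeping rather than conceptual: one must keep the intrinsic time-scale $(\omega(r)\lambda)^{2-p}$ attached to \emph{every} cylinder, cutoff, and Sobolev/Chebyshev step so that the lower-order contributions $(\omega(r)\lambda)^{p-2}w^2$ in \eqref{eq:energy} retain the correct homogeneity throughout the iteration, and one must check that the degenerate range $p-2>0$ does not obstruct the absorption in the Young/Steklov step of the Caccioppoli estimate. The genuinely non-mechanical point is the $\eps$-dichotomy in \eqref{eq:sup}, which is handled as indicated above; everything else is the standard machinery for $p$-parabolic subsolutions.
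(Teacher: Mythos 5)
The paper offers no proof of this lemma: \eqref{eq:energy} is quoted from \cite[Proposition 3.1, Chapter 2]{DB}, and \eqref{eq:sup} is said to follow from \cite[Theorem 4.1, Chapter 5]{DB} ``with some small variants''. Your reconstruction of the Caccioppoli half is correct and is exactly the standard argument behind the cited result: testing with $w\eta^p$ after Steklov averaging, with $|\nabla_X\eta|\lesssim r^{-1}$ and $|\partial_t\eta|\lesssim(\omega(r)\lambda)^{p-2}r^{-p}$ matched to the intrinsic time-length $(\lambda\omega(r))^{2-p}r^p$, and truncating the test function at an arbitrary time level to extract the sup term, gives \eqref{eq:energy} as stated.

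The sup bound is where your write-up has a genuine gap. You fix the De Giorgi levels $k_j\uparrow\eps\omega(r)\lambda$ and argue by dichotomy: if the normalized starting quantity $Y_0$ is below the threshold of the fast-geometric-convergence lemma you get $\sup_{Q_{r/2}}w\le\eps\omega(r)\lambda$; if not, you propose to ``feed the residual super-level mass back'' into $(\omega(r)\lambda)^{2-p}\bariiint_{Q_r}w^{p-1}$. In the bad case, however, all you obtain is a \emph{lower} bound on the right-hand side, namely $(\omega(r)\lambda)^{2-p}\bariiint_{Q_r}w^{p-1}\gtrsim\eps^{p-1}\omega(r)\lambda$; no upper bound on $\sup_{Q_{r/2}}w$ has been produced, and that is precisely the case the second term of \eqref{eq:sup} exists to handle (take $w$ a huge constant multiple of $\omega(r)\lambda$: an iteration stopped at level $\eps\omega(r)\lambda$ says nothing about it). The standard way to close this --- and it is also why the paper says ``small variants'' rather than quoting \cite[Chapter 5, Theorem 4.1]{DB} verbatim, since the additive term there carries a fixed structural constant rather than an arbitrary $\eps$ --- is to iterate up to a level $k=\eps\omega(r)\lambda+S$, with $S\ge 0$ chosen from the data so that the smallness hypothesis of the fast-convergence lemma holds \emph{unconditionally}; the constraint $k\ge\eps\omega(r)\lambda$ is what lets you dominate the $(\omega(r)\lambda)^{p-2}w_j^2$ contributions coming from \eqref{eq:energy} by $\eps^{2-p}k^{p-2}w_j^2$, and is the sole source of the $\eps$-dependence in $c_\eps$. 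One must then check that the admissible $S$ is controlled by the stated quantity: the iteration produces a threshold essentially of the form $S\simeq c_\eps(\omega(r)\lambda)^{n(p-2)/\kappa}\bigl(\bariiint_{Q_r}w^{p-1}\bigr)^{p/\kappa}$ with $\kappa=n(p-2)+p(p-1)$ (the intrinsic-cylinder reading of \cite[Chapter 5, Theorem 4.1]{DB} with integrability exponent $p-1$), and only a weighted Young inequality --- with weights $(p-2)(n+p)/\kappa$ and $p/\kappa$, which sum to $1$ --- converts this into $\eps\omega(r)\lambda+c_\eps(\omega(r)\lambda)^{2-p}\bariiint_{Q_r}w^{p-1}$. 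So your skeleton (truncations, nested intrinsic cylinders, energy estimate plus parabolic Sobolev plus Chebyshev, fast convergence) is the right one, but the choice of the limiting level and the exponent bookkeeping that turns the smallness threshold into the specific second term of \eqref{eq:sup} are missing, and the dichotomy as written does not supply them.
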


\subsection{Gradient estimates}
The first auxiliary theorem stated below gives an estimate of the local supremum of the gradient in the form of a reverse H\"older inequality. In the case of
the equation in \eqref{basic eq}, the estimate can be found in  \cite[Chapter 8, Theorem 5.1]{DB}. The second estimate below is a consequence of the first estimate, a  simple covering argument and~\eqref{eq:int cyl rel 2}.
\begin{theorem} \label{thm:grad bound}
Suppose that $u$ is a weak solution to \eqref{basic eq} in $Q_{r}^{\lambda r}$ for some $r,\lambda>0$ and let $\eps >0$ be a degree of freedom. Then there exists a constant $c_\eps \geq 1$, depending only on $n,p,\eps$, such that
$$
\sup_{Q_{r/2}^{\lambda r}} |\nabla_Xu| \leq \eps \lambda + c_\eps \lambda^{2-p} \bariiint_{ Q_{r}^{\lambda r}} |\nabla_Xu|^{p-1} \, \d X \d t
$$
holds. In particular, if $\lambda = \widetilde \lambda \omega(r)/r$, for some $r,\tilde\lambda>0$, then
\begin{equation}\label{eq:grad bound 2}
\sup_{ Q_{r/2}^{\widetilde \lambda \omega(r/2)}} |\nabla_Xu| \leq \eps \widetilde \lambda \omega(r)/r + c_\eps (\widetilde \lambda \omega(r)/r)^{2-p} \bariiint_{ Q_{r}^{\widetilde \lambda \omega(r)}} |\nabla_Xu|^{p-1} \, \d X \d t\,.
\end{equation}
\end{theorem}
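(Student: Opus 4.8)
Both displays will be derived from a single ingredient, DiBenedetto's gradient supremum bound for the evolutionary $p$-Laplacian, \cite[Chapter 8, Theorem 5.1]{DB}, which for a weak solution $u$ of \eqref{basic eq} on an intrinsically scaled cylinder $Q_r^{\lambda r}$ controls $\sup_{Q_{r/2}^{\lambda r}}|\nabla_X u|$ by an average of $|\nabla_X u|$ over $Q_r^{\lambda r}$ plus a multiple of $\lambda$. The first display is essentially this statement; to bring it into the precise normalization stated, I would pass to the intrinsic variables $w(y,s):=(\lambda r)^{-1}u(X_0+ry,\,t_0+\lambda^{2-p}r^2 s)$ — which, since $Q_r^{\lambda r}$ has time half-length $\lambda^{2-p}r^2$, turns $u$ into a weak solution of the same equation on $B(0,1)\times(-1,1)$ satisfying $|\nabla_y w|=\lambda^{-1}|\nabla_X u|$ — apply the scale-one gradient bound to $w$, then convert the $L^p$-average on its right-hand side to an $L^{p-1}$-average at the cost of $\eps\sup|\nabla_y w|$ via Young's inequality with exponents $(p,\tfrac{p}{p-1})$ together with a standard iteration lemma absorbing that supremum, and finally rescale back. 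The rescaling multiplies $\bariiint_{Q_r^{\lambda r}}|\nabla_X u|^{p-1}$ by $\lambda\cdot\lambda^{-(p-1)}=\lambda^{2-p}$, which is exactly the weight appearing in the statement.

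\textbf{The second display.} Here I would simply insert $\lambda:=\widetilde\lambda\,\omega(r)/r$ into the first estimate, so that $\lambda r=\widetilde\lambda\,\omega(r)$ and $Q_r^{\lambda r}=Q_r^{\widetilde\lambda\omega(r)}$. This gives control of $\sup$ over $Q_{r/2}^{\widetilde\lambda\omega(r)}$, whereas \eqref{eq:grad bound 2} asks for the supremum over the larger cylinder $Q_{r/2}^{\widetilde\lambda\omega(r/2)}$ — larger in the time direction, since $\omega(r/2)\le\omega(r)$ and $2-p<0$. This is where \eqref{eq:int cyl rel 2} enters: with $\varrho=r/2$ it gives $(1/2)^{(p-2)/p}\,Q_{r/2}^{\widetilde\lambda\omega(r/2)}\subset Q_{r/2}^{\widetilde\lambda\omega(r)}$, and, more usefully, applied at a generic centre $(\bar X,\bar t)\in Q_{r/2}^{\widetilde\lambda\omega(r/2)}$ it shows that a small intrinsic cylinder about $(\bar X,\bar t)$ with time-scale $\lambda$ still lies inside $Q_r^{\widetilde\lambda\omega(r)}$ — here one uses the concavity inequality \eqref{eq:omega 1}, equivalently $\omega(r/2)\ge\tfrac12\omega(r)$, to keep its time extent under control. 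Applying the first estimate on such a small cylinder, and noting that its measure is comparable up to a constant $c(n,p)$ to that of $Q_r^{\widetilde\lambda\omega(r)}$, one bounds $|\nabla_X u(\bar X,\bar t)|$ by the right-hand side of \eqref{eq:grad bound 2}; taking the supremum over $(\bar X,\bar t)$ — i.e. a covering of $Q_{r/2}^{\widetilde\lambda\omega(r/2)}$ by boundedly many such cylinders, with the overlap absorbed into $c_\eps$ — concludes.

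\textbf{Main difficulty.} The genuine analysis — DiBenedetto's intrinsic De Giorgi/Moser iteration producing the scale-one gradient bound — is imported from \cite{DB}, so what is left is organizational: producing the stated $\eps$-form without leaving an un-absorbed $\sup$ term, and running the covering in the second part so that every auxiliary cylinder truly lies in the region where $u$ solves the equation, with all constants depending only on $n$, $p$, $\eps$. The one point demanding real care is the mismatch between the interpolative time scales $\omega(r/2)$ and $\omega(r)$; \eqref{eq:int cyl rel 2}, itself a consequence of the concavity of $\omega$ through \eqref{eq:omega 1}, is precisely what makes the dilations in the covering controllable, and it is the single nontrivial geometric input.
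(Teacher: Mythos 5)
Your proposal is correct and follows essentially the same route as the paper, which likewise imports the first estimate from \cite[Chapter 8, Theorem 5.1]{DB} and obtains \eqref{eq:grad bound 2} from it by a simple covering argument combined with the cylinder inclusion \eqref{eq:int cyl rel 2} (i.e.\ the concavity property \eqref{eq:omega 1}). Your verification that the enlarged time extent of $Q_{r/2}^{\widetilde\lambda\omega(r/2)}$ is controlled via $\omega(r/2)\geq\tfrac12\omega(r)$, and that the auxiliary intrinsic cylinders have measure comparable to that of $Q_{r}^{\widetilde\lambda\omega(r)}$, supplies exactly the details the paper leaves implicit.
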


 The next fundamental regularity result was obtained for evolutionary parabolic equations in \cite{DBF}. We refer to \cite[Theorem 3.2]{KMw} and \cite[Theorem 3.2]{KMpisa} for the scalar case and for more details.
\begin{theorem}\label{osci} Suppose that $u$ is a weak solution to \eqref{basic eq} in a space-time cylinder $\O$. Then
$\nabla_Xu$ has the H\"older continuous representative in $\O$. Moreover, let $Q_{r}^{\lambda r} \subset \O$, for some $r,\lambda>0$ such that
$$
  \sup_{Q_{r}^{\lambda r}} |\nabla_Xu|\leq  M\lambda,
$$
holds for a constant $M \geq 1$. Then there exists $\alpha \equiv \alpha (n,p,M) \in (0,1]$ such that
\eqn{oscit}
$$
\osc_{Q_{\varrho}^{\lambda \varrho}}\nabla_Xu \leq 4 M\lambda \left(\frac{\varrho}{r} \right)^{\alpha},
$$
holds for all $\varrho \in (0,r)$. Here $Q^{\lambda \varrho}_{\varrho} \subset Q_{r}^{\lambda r}$, for $0<\rho\leq r$,  is an intrinsic cylinder sharing its center with $Q_{r}^{\lambda r}$.
\end{theorem}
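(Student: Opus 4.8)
The assertion that $\nabla_X u$ admits a H\"older continuous representative is the classical DiBenedetto--Friedman theorem \cite{DBF}, and \eqref{oscit} is its quantitative counterpart; the plan is to run the DiBenedetto intrinsic-geometry scheme and keep track of the constants. First I would normalize: the intrinsic scaling $(X,t)\mapsto\bigl((X-X_0)/r,\ \lambda^{p-2}(t-t_0)/r^2\bigr)$, together with the replacement of $u$ by $u/(\lambda r)$, preserves \eqref{basic eq} and reduces matters to a solution $u$ of \eqref{basic eq} on $Q_1^1=B(0,1)\times(-1,1)$ with $\sup_{Q_1^1}|\nabla_X u|\le M$, for which one has to show $\osc_{Q_\varrho^\varrho}\nabla_X u\le 4M\varrho^{\alpha}$ for $\varrho\in(0,1)$. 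Next, since Theorem \ref{thm:grad bound} gives that $\nabla_X u$ is locally bounded, a standard difference-quotient argument yields that $u$ has $L^2_{\mathrm{loc}}$ second-order spatial derivatives with $\partial_t u\in L^2_{\mathrm{loc}}$, so each component $v:=u_{x_k}$ is a weak solution of the linear divergence-form equation $\partial_t v=\nabla_X\cdot(A\nabla_X v)$, where $A_{ij}=|\nabla_X u|^{p-4}\bigl((p-2)u_{x_i}u_{x_j}+\delta_{ij}|\nabla_X u|^{2}\bigr)$ has, at each point, ellipticity comparable to $|\nabla_X u|^{p-2}$ (this is the operator $\tilde{\mathcal H}$ of the introduction).

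The core is a one-step oscillation-decay lemma for the vector $\nabla_X u$, phrased in DiBenedetto's intrinsic cylinders $Q_\rho^{\mu\rho}$, where $\mu$ records the local size of $|\nabla_X u|$. I would fix $\sigma,\theta\in(0,1)$ depending only on $n,p,M$ and iterate down a geometric sequence of radii $\varrho_j=\sigma^{j}$, tracking at each step both $\mu_j^+:=\sup_{Q^{(j)}}|\nabla_X u|$ and $\omega_j:=\osc_{Q^{(j)}}\nabla_X u$ on the intrinsic cylinder $Q^{(j)}$ scaled to the level $\mu_j^+$. The inductive step rests on the \emph{alternative}: on a cylinder $Q$ with $\sup_Q|\nabla_X u|\le\mu$, consider whether the set $\{|\nabla_X u|<\mu/4\}$ occupies a substantial portion of $Q$. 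In the \emph{degenerate} branch (it does), a De Giorgi-type level-set iteration based on energy inequalities of the kind in Lemma \ref{lemma:energy} propagates the smallness of $|\nabla_X u|$ and forces $\sup|\nabla_X u|\le(1-c_0)\mu$ on a concentric cylinder, so both $\mu^+$ and hence $\omega$ drop by a fixed factor. In the \emph{non-degenerate} branch $|\nabla_X u|\gtrsim\mu$ on most of $Q$, so that after rescaling time by $\mu^{p-2}$ the equation for each $v=u_{x_k}$ is essentially uniformly parabolic, and the De Giorgi--Nash--Moser oscillation decay for $v$ (Caccioppoli/energy estimates plus a level-set or logarithmic iteration, applied to each component) gives $\osc_{\sigma Q}\nabla_X u\le\theta\,\osc_Q\nabla_X u$. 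Combining the two branches yields a recursion forcing $\omega_j\le C(n,p,M)\,\theta_\ast^{\,j}$ for some $\theta_\ast=\theta_\ast(n,p,M)\in(0,1)$; here one uses $2-p<0$, so that a smaller gradient level makes the intrinsic cylinder \emph{longer} in time and $Q^{(j+1)}\subset Q^{(j)}$ once $\sigma$ is small, which keeps the cylinders nested and the induction consistent.

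Finally I would pass from the dyadic estimate to \eqref{oscit}: for $\varrho\in(0,1)$ choose $j$ with $\sigma^{j+1}<\varrho\le\sigma^{j}$; because $\mu_j^+\le M$, the intrinsic cylinder $Q^{(j)}$ contains $Q_{\varrho_j}^{\varrho_j}$ up to a dilation depending only on $M$, so $Q_\varrho^\varrho\subset c\,Q^{(j-1)}$ and $\osc_{Q_\varrho^\varrho}\nabla_X u\le\osc_{cQ^{(j-1)}}\nabla_X u\lesssim\omega_{j-1}\lesssim M\varrho^{\alpha}$ with $\alpha=\log\theta_\ast/\log\sigma\in(0,1]$ (after possibly shrinking $\sigma$), the constants being absorbed into the exponent and into the factor $4M$. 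The first assertion of the theorem then follows by covering any compact subset of $\O$ by such intrinsic cylinders (the gradient being locally bounded by Theorem \ref{thm:grad bound}) and using that on each of them the intrinsic metric \eqref{intrim} with $\alpha=1$ is comparable to the parabolic distance, so $\nabla_X u$ has a locally H\"older continuous representative. I expect the main obstacle to be the alternative itself: executing the De Giorgi/Moser-type oscillation decay for the components $u_{x_k}$ on intrinsic cylinders, and, above all, the measure-theoretic and geometric bookkeeping that keeps the two branches compatible so that the rescaled cylinders stay nested throughout the iteration — the algebra around $2-p<0$ is what makes this delicate.
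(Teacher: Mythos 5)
The paper offers no proof of this statement to compare against: Theorem \ref{osci} is imported verbatim from the literature, with the text citing \cite{DBF} for the original result and \cite[Theorem 3.2]{KMw}, \cite[Theorem 3.2]{KMpisa} for the precise intrinsic formulation used here. Your outline is a faithful description of exactly that classical DiBenedetto--Friedman argument, and the reduction you perform is correct: the map $(X,t)\mapsto\bigl((X-X_0)/r,\lambda^{p-2}(t-t_0)/r^2\bigr)$ together with $u\mapsto u/(\lambda r)$ does preserve \eqref{basic eq}, sends $Q_r^{\lambda r}$ to $Q_1^1$, and turns \eqref{oscit} into the normalized decay estimate on $Q_\varrho^\varrho$; the linearized matrix you write down is the paper's $\tilde A$, with ellipticity $\approx|\nabla_Xu|^{p-2}$.

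As a proof, however, what you have is a roadmap rather than an argument: the entire content of the theorem sits in the degenerate/non-degenerate alternative and the attendant De Giorgi-type iterations, which you describe but do not execute (and you say so yourself). Two points would need real work if you intended to carry this out rather than cite it. First, the step ``a standard difference-quotient argument yields $D_X^2u\in L^2_{\mathrm{loc}}$, so $u_{x_k}$ is a weak solution of $\tilde{\mathcal H}v=0$'' is not automatic for the degenerate range $p>2$ (a priori one only controls derivatives of $|\nabla_Xu|^{(p-2)/2}\nabla_Xu$); the standard route is to prove the oscillation decay for the regularized equations with modulus $(\varepsilon^2+|\nabla_Xu|^2)^{(p-2)/2}$, uniformly in $\varepsilon$, and pass to the limit. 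Second, the bookkeeping that produces the clean form of \eqref{oscit} -- the explicit factor $4M\lambda$, the exponent depending only on $(n,p,M)$, and the nesting $Q_\varrho^{\lambda\varrho}\subset Q_r^{\lambda r}$ along the iteration -- is precisely what \cite{KMw,KMpisa} supply, and your final ``absorb constants into $\alpha$ and into $4M$'' glosses over it. Given that the paper itself simply invokes these references, your proposal is consistent with the paper's treatment, but it should be presented as such (a citation plus a sketch of the cited proof), not as a self-contained proof.
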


In the intrinsic geometry suited for the general modulus of continuity, the above H\"older estimates takes the following form.

\begin{corollary} \label{cor:osci}
Let $u$ be as in Theorem~\ref{osci} with $\lambda = \widetilde \lambda \omega(r)/r$, for some $r,\widetilde \lambda>0$. Then
 $$  \osc_{ Q_{\varrho}^{\widetilde \lambda \omega(\varrho)}} \nabla_Xu \leq 4M \widetilde \lambda \frac{\omega(r)}{r} \left(\frac{\varrho}{r} \right)^{\alpha}
$$
holds for all $\varrho \in (0,r)$ and with $\alpha$ as in Theorem~\ref{osci}.
\end{corollary}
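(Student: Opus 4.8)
The plan is to derive Corollary~\ref{cor:osci} directly from Theorem~\ref{osci} by a change of the intrinsic scaling parameter, followed by a short cylinder-inclusion observation that exploits concavity of $\omega$ together with $p>2$.

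First I would set $\lambda := \widetilde{\lambda}\,\omega(r)/r$, so that $Q_r^{\lambda r} = Q_r^{\widetilde{\lambda}\omega(r)}$ by the very definition~\eqref{eq:gen int cyl +} of the intrinsic cylinders. Hence the standing hypothesis $\sup_{Q_r^{\lambda r}} |\nabla_X u| \le M\lambda$ of Theorem~\ref{osci} is exactly the one imposed in the corollary, and Theorem~\ref{osci} produces a constant $\alpha = \alpha(n,p,M)\in(0,1]$ with
\[
\osc_{Q_\varrho^{\lambda\varrho}}\nabla_X u \;\le\; 4M\lambda\Bigl(\tfrac{\varrho}{r}\Bigr)^{\alpha} \;=\; 4M\widetilde{\lambda}\,\frac{\omega(r)}{r}\Bigl(\tfrac{\varrho}{r}\Bigr)^{\alpha}
\]
for every $\varrho\in(0,r)$, where $Q_\varrho^{\lambda\varrho}$ is the concentric intrinsic cylinder $\subset Q_r^{\lambda r}$.

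It then remains to replace $Q_\varrho^{\lambda\varrho}$ by $Q_\varrho^{\widetilde{\lambda}\omega(\varrho)}$ on the left. Since $\omega$ is concave with $\omega(0)=0$, inequality~\eqref{eq:omega 1} gives $\omega(\varrho)/\varrho \ge \omega(r)/r$ for $0<\varrho<r$, hence $\widetilde{\lambda}\,\omega(\varrho) \ge \widetilde{\lambda}\,\omega(r)\varrho/r = \lambda\varrho$. Because $p>2$ the exponent $2-p$ is negative, so $(\widetilde{\lambda}\omega(\varrho))^{2-p}\varrho^{p} \le (\lambda\varrho)^{2-p}\varrho^{p}$; as the two cylinders share the same center and the same spatial radius $\varrho$, this yields $Q_\varrho^{\widetilde{\lambda}\omega(\varrho)} \subset Q_\varrho^{\lambda\varrho} \subset Q_r^{\lambda r}$. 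Monotonicity of the oscillation under set inclusion gives $\osc_{Q_\varrho^{\widetilde{\lambda}\omega(\varrho)}}\nabla_X u \le \osc_{Q_\varrho^{\lambda\varrho}}\nabla_X u$, and combining this with the displayed estimate (and noting that the constant $\alpha$ is inherited verbatim, with the same $M$) completes the proof. I do not anticipate a genuine obstacle here: all the analytic content sits in Theorem~\ref{osci}, and the only point requiring care is verifying the inclusion $Q_\varrho^{\widetilde{\lambda}\omega(\varrho)} \subset Q_\varrho^{\lambda\varrho}$, which is precisely where concavity of $\omega$ and the degeneracy $p>2$ are used.
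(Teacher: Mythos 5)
Your argument is correct and is essentially the intended one: the paper states Corollary \ref{cor:osci} as an immediate reformulation of Theorem \ref{osci}, obtained exactly as you do by taking $\lambda=\widetilde\lambda\,\omega(r)/r$ and comparing the intrinsic cylinders via the concavity property \eqref{eq:omega 1} together with $2-p<0$. Your verification of the inclusion $Q_\varrho^{\widetilde\lambda\omega(\varrho)}\subset Q_\varrho^{\lambda\varrho}$ is the only detail the paper leaves implicit, and you have it right.
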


\subsection{Estimates assuming that $|\nabla_Xu|> 0$} \label{nondd} We here use (some of) the estimates of the previous subsection to deduce some elementary Schauder type estimates/conclusions assuming that $|\nabla_Xu|> 0$, i.e., assuming that the equation is locally non-degenerate.

 \begin{lemma}\label{lem:c2alpha}
Suppose that $u$ is a weak solution to \eqref{basic eq} in a space-time cylinder $\O=U\times (t_1,t_2)$. Assume that $|\nabla_Xu|> 0$ in $\O$. Then $u$ is infinitely differentiable (with respect to $X$ and $t$) in
$\O$ and $u$ is a strong solution to evolutionary $p$-Laplace equation in $\O$.
\end{lemma}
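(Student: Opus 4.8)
The plan is to bootstrap regularity starting from the $C^{1,\alpha}$ theory recalled in Theorem \ref{osci}, using the hypothesis $|\nabla_X u|>0$ to turn \eqref{basic eq} into a \emph{uniformly parabolic, linear} equation with H\"older coefficients once we freeze the solution. Fix a point $(X_*,t_*)\in\O$ and a small intrinsic cylinder $Q\Subset\O$ around it; by Theorem \ref{osci}, $\nabla_X u$ is H\"older continuous on $Q$, and by continuity together with $|\nabla_X u|>0$ we have $0<\mu^{-1}\le|\nabla_X u|\le\mu<\infty$ on (a slightly smaller) $Q$. Writing the equation in non-divergence form as in \eqref{sq1},
\begin{align*}
u_t = |\nabla_X u|^{p-2}\Delta u + (p-2)|\nabla_X u|^{p-4}u_{x_i}u_{x_j}u_{x_ix_j} = a_{ij}(\nabla_X u)\,u_{x_ix_j},
\end{align*}
where $a_{ij}(\xi)=|\xi|^{p-2}(\delta_{ij}+(p-2)|\xi|^{-2}\xi_i\xi_j)$ is smooth and (since $|\xi|$ is bounded away from $0$ and $\infty$ on $Q$) uniformly elliptic with ellipticity constants depending only on $p$ and $\mu$; its coefficients $x\mapsto a_{ij}(\nabla_X u(X,t))$ are H\"older continuous on $Q$ because $\nabla_X u$ is. Thus $u$ solves a linear uniformly parabolic equation $u_t = A_{ij}(X,t)u_{x_ix_j}$ with $C^{\alpha}$ coefficients, and interior parabolic Schauder estimates give $u\in C^{2,\alpha}_{\mathrm{loc}}$ near $(X_*,t_*)$; in particular $u$ is a strong (classical $C^{2,1}$) solution there, proving the second assertion.

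For the smoothness claim I would iterate. Differentiate \eqref{basic eq} in a spatial direction $x_k$: with $v:=u_{x_k}$, one gets formally $\tilde{\mathcal H}v=0$ for the linear operator $\tilde{\mathcal H}$ displayed in the introduction, whose coefficients $\tilde A_{ij}=|\nabla_X u|^{p-4}[(p-2)u_{x_i}u_{x_j}+\delta_{ij}|\nabla_X u|^2]$ are uniformly elliptic (again because $|\nabla_X u|\approx 1$ on $Q$) and, having just shown $u\in C^{2,\alpha}$, are themselves $C^{\alpha}$. Parabolic Schauder applied to $\tilde{\mathcal H}v=0$ yields $v=u_{x_k}\in C^{2,\alpha}_{\mathrm{loc}}$, i.e.\ $u\in C^{3,\alpha}_{\mathrm{loc}}$, which upgrades the coefficients $\tilde A_{ij}$ to $C^{1,\alpha}$, hence $v\in C^{3,\alpha}$, and so on: each round gains one derivative on the coefficients and therefore one more on $u$. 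This gives $u\in C^{\infty}$ in the spatial variables on $Q$. Finally, full space-time smoothness follows since time derivatives are recovered algebraically from the equation: $u_t=a_{ij}(\nabla_X u)u_{x_ix_j}$ expresses $u_t$ as a smooth function of spatial derivatives of $u$ (here using once more that $\nabla_X u$ stays in the region where $a_{ij}$ is smooth), and differentiating this identity repeatedly in $x$ and $t$ expresses all mixed derivatives $\partial_t^k\nabla_X^m u$ in terms of already-controlled quantities, so $u\in C^{\infty}(\O)$.

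The main obstacle is purely technical rather than conceptual: one must carry the intrinsic-geometry bookkeeping carefully, because the Schauder estimates of Theorem \ref{osci} and Corollary \ref{cor:osci} are stated in DiBenedetto's intrinsic cylinders $Q_r^{\lambda\omega(r)}$, and to invoke classical (non-intrinsic) linear parabolic Schauder theory on a fixed cylinder one needs the bound $|\nabla_X u|\approx\lambda$ to hold on that cylinder so that the intrinsic and parabolic metrics are comparable there. The hypothesis $|\nabla_X u|>0$ together with the continuity from Theorem \ref{osci} supplies exactly this: on a sufficiently small cylinder about any interior point, $\lambda$ can be taken comparable to $|\nabla_X u(X_*,t_*)|$, the cylinders $Q_r^{\lambda r}$ are comparable to ordinary parabolic cylinders, and the linearization has genuinely uniformly elliptic, H\"older coefficients. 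Once this is set up, the bootstrap is routine; I would only remark on, rather than expand, the verification that $a_{ij}(\xi)$ and $\tilde A_{ij}(\xi)$ are smooth and uniformly elliptic for $\xi$ in a compact subset of $\mathbb R^n\setminus\{0\}$, since this is the elementary computation underlying the whole argument.
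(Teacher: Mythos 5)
Your proposal is correct and follows essentially the same route as the paper: use the DiBenedetto--Friedman $C^{1,\alpha}$ theory (Theorem \ref{osci}) together with $|\nabla_Xu|>0$ to get two-sided bounds on $|\nabla_Xu|$ on a small intrinsic cylinder, rewrite the equation in non-divergence form with uniformly elliptic H\"older coefficients, apply parabolic Schauder theory, and then bootstrap by differentiating the equation. The paper differs only in minor technical points: it first derives the gradient bound from Theorem \ref{thm:grad bound} before invoking the oscillation estimate, and, since the weak solution is not a priori known to possess second derivatives, it upgrades it to a classical solution by combining Schauder theory with \emph{uniqueness for the Dirichlet problem} for the frozen-coefficient linear equation (citing Lieberman and Wang), a step you should make explicit rather than applying Schauder estimates directly to $u$; also note that in your iteration the coefficients of $\tilde{\mathcal H}$ are in fact $C^{1,\alpha}$ (not merely $C^{\alpha}$) once $u\in C^{2,\alpha}$, which is what lets the divergence-form equation for $u_{x_k}$ be expanded and yield $u_{x_k}\in C^{2,\alpha}$.
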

\begin{proof} In the following we can without loss of generality assume that $U$ is bounded. Let $(X_0,t_0)\in\O$ and assume that $|\nabla_Xu(X_0,t_0)|=:\delta>0$. As $u$ is a weak solution to \eqref{basic eq} in  $\O$, we have that $u\in L^p(t_1,t_2,W^{1,p}(U))$. Assume that $Q_{r}^{\lambda r}(X_0,t_0)$ is compactly contained in $\O$ for some $r,\lambda>0$. Then, using Lemma \ref{thm:grad bound}, and H{\"o}lder's inequality,
\begin{align}\label{mad1}
\sup_{Q_{r/2}^{\lambda r}(X_0,t_0)} |\nabla_Xu| &\leq \eps \lambda + c_\eps \lambda^{2-p} \bariiint_{ Q_{r}^{\lambda r}(X_0,t_0)} |\nabla_Xu|^{p-1} \, \d X \d t\notag\\
&\leq \eps \lambda + c_\eps \lambda^{2-p} \biggl (\bariiint_{ Q_{r}^{\lambda r}(X_0,t_0)} |\nabla_Xu|^{p} \, \d X \d t\biggr )^{(p-1)/p}\notag\\
&\leq \eps \lambda + c_\eps \lambda^{(2-p)/p} r^{-(n+2)(p-1)/p}\|u\|_{L^p(t_1,t_2,W^{1,p}(U))}.
\end{align}
Let
$$M:=\eps  + c_\eps \lambda^{(2-2p)/p} r^{-(n+2)(p-1)/p}\|u\|_{L^p(t_1,t_2,W^{1,p}(U))}<\infty.$$
Then \eqref{mad1} gives
\begin{align}\label{mad2}
\sup_{Q_{r/2}^{\lambda r}(X_0,t_0)} |\nabla_Xu| &\leq M\lambda.
\end{align}
Using Theorem \ref{osci} this implies that there exists $\alpha=\alpha (n,p,M) \in (0,1]$ such that
\begin{align}\label{mad3}
\osc_{Q_{\varrho}^{\lambda \varrho}(X_0,t_0)}\nabla_Xu \leq 4 M\lambda \left(\frac{\varrho}{r} \right)^{\alpha},
\end{align}
holds for all $\varrho \in (0,r)$. In particular,
\begin{align}\label{mad4}
|\nabla_Xu(X,t)-\nabla_Xu(X_0,t_0)|\leq 4 M\lambda \left(\frac{\varrho}{r} \right)^{\alpha},
\end{align}
whenever $(X,t)\in Q_{\varrho}^{\lambda \varrho}(X_0,t_0)$.  With $M$, $\lambda$, $r$ and $\alpha$ fixed, we now choose $\varrho_0$ small enough so that
\begin{align}\label{mad4+}
|\nabla_Xu(X,t)-\nabla_Xu(X_0,t_0)|\leq \delta/2.
\end{align}
Hence
\begin{align}\label{mad6}
\delta/2\leq|\nabla_Xu(X,t)|\leq 3\delta/2,
\end{align}
whenever $(X,t)\in Q_{\varrho_0}^{\lambda \varrho_0}(X_0,t_0)$. Next, formally carrying out the differentiations in the $p$-parabolic equation, we arrive at the  non-divergence form equation
\begin{align}\label{bla}
 \partial_tu-|\nabla_Xu(X,t)|^{p-2}u_{x_ix_j}- (p-2)|\nabla_Xu(X,t)|^{p-4}u_{x_ix_k}u_{x_j}=:\partial_tu-B_{ij}(X,t)u_{x_ix_j}.
\end{align}
Here,  $B_{ij} = B_{ji}$, and
\begin{equation}
	\label{eqatilde}
	c^{-1} \lambda(X,t) |\xi|^2 \leq \sum\limits_{i,j=1}^n B_{i,j} (X,t)\xi_i \xi_j \leq c \lambda(X,t) |\xi|^2,\mbox{ whenever } \xi\in \mathbb R^n,
\end{equation}
$\lambda(X,t):= |\nabla_X u(X,t)|^{p-2}$. In particular, in this form the $p$-parabolic equation is, in $Q_{\varrho_0/2}^{\lambda \varrho_0/2}(X_0,t_0)$ and as a consequence of \eqref{mad6}, a uniformly elliptic parabolic equation in non-divergence form with symmetric coefficients. Now, using Schauder theory for these equation, and uniqueness for the Dirichlet problem, see Lemma 12.11 in \cite{Lieb} or \cite{W1,W2}, it follows that $u$ is twice continuously differentiable in $X$ and once continuously differentiable in $X$. Furthermore, by a bootstrap argument, iteratively differentiating the $p$-parabolic equation with respect to $X$ and $t$, we can conclude that $u$ is infinitely differentiable (with respect to $X$ and $t$) in
$\O$, and that $u$ is a strong solution to evolutionary $p$-Laplace equation in $Q_{\varrho_0/2}^{\lambda \varrho_0/2}(X_0,t_0)$. This completes the proof of the lemma.
\end{proof}

\begin{lemma}\label{thm1-lem}  Let $p$, $2<p<\infty$, be fixed. Let $\Sigma$ be a closed subset of $\mathbb R^{n+1}$ which is parabolic Ahlfors-David regular with constant $M$, let  $\Omega:= \mathbb R^{n+1}\setminus \Sigma$. Let
$(X_0,t_0)\in \Sigma$, $r_0\in (0,\diam (\Sigma)/2)$. Assume  that $u$ is a non-negative function in $\Omega\cap C(X_0,t_0,2r_0)$  which satisfies $\partial_tu-\nabla_X\cdot(|\nabla_Xu|^{p-2}\nabla_Xu)=0$ in  $\Omega\cap C(X_0,t_0,2r_0)$. Assume in addition that there  a  constant $\gamma$, $1\leq\gamma<\infty$,  such that
\begin{align}\label{boundsaapalem} \gamma^{-1}\leq |\nabla_X u(Y,s)|,\ u(Y,s)\leq \gamma\delta(Y,s),\end{align}
for all $(Y,s)\in \Omega\cap C(X_0,t_0,r_0)$.  Then there exist  constants $c=c(n,M)\in (1,\infty)$, and $\tilde\gamma$, which only depends on $n$, $p$, $\gamma$, such that
\begin{align}\label{boundsaapalem+}\delta^2(Y,s)|\nabla_X^3u(Y,s)|+ \delta(Y,s)|\nabla_X^2u(Y,s)|+|\nabla_X u(Y,s)| \leq \tilde\gamma,\end{align}
for all $(Y,s)\in \Omega\cap C(X_0,t_0,r_0/c)$.
\end{lemma}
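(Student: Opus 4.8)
The plan is to use the non-degeneracy hypothesis $|\nabla_X u|>0$ to rewrite \eqref{basic eq}, near a given point, as a uniformly parabolic linear equation in non-divergence form (after first establishing an a priori upper bound on $|\nabla_X u|$) and then run interior parabolic Schauder estimates at the scale of the distance $\delta$. I would first fix $(Y,s)\in\Omega\cap C(X_0,t_0,r_0/c)$, set $\delta:=\delta(Y,s)$, and choose $c=c(n)$ large enough that the parabolic cylinder $C(Y,s,\delta/4)$ is contained in $\Omega\cap C(X_0,t_0,r_0)$; this is possible because $(X_0,t_0)\in\Sigma$ forces the parabolic distance from $(Y,s)$ to $(X_0,t_0)$ to be $\lesssim r_0/c$, and on $C(Y,s,\delta/4)$ one has $\delta(Z,\tau)\asymp\delta$. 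There \eqref{boundsaapalem} gives $\gamma^{-1}\le|\nabla_X u|$ and $0\le u\le\tfrac32\gamma\delta$, and by Lemma \ref{lem:c2alpha} $u$ is smooth and a strong solution of \eqref{basic eq}. Since $(Y,s)$ belongs to every concentric sub-cylinder, it then suffices to bound $\|\nabla_X u\|_{L^\infty}$, $\delta\|\nabla_X^2 u\|_{L^\infty}$ and $\delta^2\|\nabla_X^3 u\|_{L^\infty}$ on some $C(Y,s,\varrho)$ with $\varrho\asymp\delta$, with constants depending only on $n,p,\gamma$.

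\emph{Step 1 (a priori gradient bound).} Since $p>2$ and $\gamma\ge1$, intrinsic cylinders $Q_\varrho^{\lambda\varrho}$ with $\lambda\asymp1$ are comparable to ordinary parabolic cylinders of radius $\asymp\varrho$, so the intrinsic estimates of Section \ref{Pest} may be applied on $C(Y,s,\delta/4)$ with $\lambda=1$. Using the energy/Caccioppoli estimate of Lemma \ref{lemma:energy} (with $w=u$ and $\omega\equiv1$) together with $u\le\tfrac32\gamma\delta$, I would bound the $L^p$-average of $|\nabla_X u|$ over a parabolic cylinder of radius $\asymp\delta$ by $c(n,p)\gamma$; Hölder's inequality converts this into a bound on the $L^{p-1}$-average, and feeding that into the reverse-Hölder gradient estimate of Theorem \ref{thm:grad bound} (with $\lambda=1$ and the free parameter $\eps=1$) yields $\|\nabla_X u\|_{L^\infty(C(Y,s,\varrho_1))}\le\tilde\gamma_1$ for some $\varrho_1\asymp\delta$ and $\tilde\gamma_1=\tilde\gamma_1(n,p,\gamma)$.

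\emph{Step 2 (uniform parabolicity and Schauder bootstrap).} Combining Step 1 with \eqref{boundsaapalem} gives $\gamma^{-1}\le|\nabla_X u|\le\tilde\gamma_1$ near $(Y,s)$, so there the strong equation reads $\partial_t u-B_{ij}(X,t)u_{x_ix_j}=0$ with $B_{ij}$ as in \eqref{bla}; the matrix $(B_{ij})$ is then uniformly elliptic with constants depending only on $n,p,\gamma$, and $B_{ij}=F_{ij}(\nabla_X u)$ for a fixed map $F$ smooth on the annulus $\{\gamma^{-1}\le|\xi|\le\tilde\gamma_1\}$. By Theorem \ref{osci} (used with $\lambda=1$ and $M=\tilde\gamma_1$, legitimate by Step 1), $\nabla_X u$ is Hölder continuous near $(Y,s)$ in the parabolic metric with exponent $\alpha=\alpha(n,p,\gamma)$ and oscillation decay $\lesssim_{n,p,\gamma}(\varrho/\delta)^\alpha$, hence $[B_{ij}]_{C^\alpha}\lesssim_{n,p,\gamma}\delta^{-\alpha}$. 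Interior parabolic Schauder estimates (e.g.\ \cite{Lieb}), applied on a parabolic cylinder of radius $\asymp\delta$ whose scaled coefficient seminorm is $O(1)$, then give $\delta^2\|\nabla_X^2 u\|_{L^\infty}+\delta^{2+\alpha}[\nabla_X^2 u]_{C^\alpha}\lesssim\|u\|_{L^\infty}\lesssim\gamma\delta$ on a slightly smaller cylinder, i.e.\ $\delta|\nabla_X^2 u(Y,s)|\lesssim_{n,p,\gamma}1$ together with companion $C^\alpha$-control. Differentiating the equation in $x_k$, the function $v:=u_{x_k}$ solves $\partial_t v-B_{ij}v_{x_ix_j}=(\partial_m F_{ij})(\nabla_X u)\,u_{x_mx_k}u_{x_ix_j}=:\phi$, and the bounds just obtained yield $\|\phi\|_{L^\infty}\lesssim_{n,p,\gamma}\delta^{-2}$ and $[\phi]_{C^\alpha}\lesssim_{n,p,\gamma}\delta^{-2-\alpha}$; a second application of interior Schauder to $v$ gives $\delta^2\|\nabla_X^2 v\|_{L^\infty}\lesssim\|v\|_{L^\infty}+\delta^2\|\phi\|_{L^\infty}+\delta^{2+\alpha}[\phi]_{C^\alpha}\lesssim_{n,p,\gamma}1$, that is $\delta^2|\nabla_X^3 u(Y,s)|\lesssim_{n,p,\gamma}1$. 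Taking $\tilde\gamma$ to be the largest of the three resulting constants proves \eqref{boundsaapalem+}.

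\emph{Expected main obstacle.} The genuinely delicate point is Step 1: for the non-homogeneous equation \eqref{basic eq} the intrinsic (DiBenedetto) geometry governing its gradient estimates depends a priori on the size of $\nabla_X u$, which is exactly the quantity to be bounded, so there is a potential circularity. What breaks it is that the hypothesis $u\le\gamma\delta$ pins the intrinsic scale at $\asymp1$ (up to $\gamma$), while $p>2$ ensures that unit-scale intrinsic cylinders lie inside ordinary parabolic cylinders; this lets the intrinsic energy and gradient estimates of Section \ref{Pest} be run on genuine parabolic cylinders without a fixed-point argument. Once $\gamma^{-1}\le|\nabla_X u|\le\tilde\gamma_1$ is in hand the equation is uniformly parabolic with Hölder coefficients and the rest is routine parabolic Schauder theory; the only bookkeeping nuisance is controlling the Hölder norms of the composed coefficient $F(\nabla_X u)$ through the differentiated equation.
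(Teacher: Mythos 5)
Your proposal follows essentially the same route as the paper: the upper gradient bound is obtained by combining Lemma \ref{lemma:energy} and Theorem \ref{thm:grad bound} with $\lambda=1$ and $u\leq\gamma\delta$ on Whitney-scale cylinders, then the two-sided non-degeneracy, Lemma \ref{lem:c2alpha}, the non-divergence form \eqref{bla}--\eqref{eqatilde}, and interior parabolic Schauder estimates yield the bounds on $\delta|\nabla_X^2u|$ and $\delta^2|\nabla_X^3u|$. Your Step 2 merely spells out (via Theorem \ref{osci} and differentiating the equation) the Schauder bootstrap that the paper cites to \cite{Lieb,W1,W2}, so the argument is correct and matches the paper's proof.
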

\begin{proof} Let $I$ be a Whitney cube of size $r$ such that $I\cap C(X_0,t_0,r_0/c)\neq 0$. Combining Theorem \ref{thm:grad bound} and Lemma \ref{lemma:energy}, with $\lambda=1$, and the assumption that $ u(Y,s)\leq \gamma\delta(Y,s)$, we immediately deduce that $|\nabla_X u(Y,s)|\leq \tilde\gamma$ on $I$, and hence
\begin{align}\label{boundsaapalem+ml} \gamma^{-1}\leq |\nabla_X u(Y,s)|\leq \tilde\gamma,\end{align}
for all $(Y,s)\in \Omega\cap C(X_0,t_0,r_0/c)$. Using Lemma \ref{lem:c2alpha} we have that $u$ is infinitely differentiable (with respect to $X$ and $t$) in
$\Omega\cap C(X_0,t_0,r_0/c)$. Now, again formally carrying out the differentiations in the $p$-parabolic equation, we arrive at the  non-divergence form equation in \eqref{bla}
with coefficients as in \eqref{eqatilde}.  The stated estimate for $\delta(Y,s)|\nabla_X^2u(Y,s)|$ and $\delta^2(Y,s)|\nabla_X^3u(Y,s)|$ now follows from \eqref{boundsaapalem+ml} and Schauder estimates, again see \cite{Lieb} or \cite{W1,W2}.
\end{proof}

\section{Applications: proof of Theorem \ref{Free} and Theorem \ref{Free+}}\label{App}

In this section we prove  Theorem \ref{Free} and Theorem \ref{Free+}.

\subsection{Proof of Theorem \ref{Free}} Let $(X_0,t_0)\in \Sigma$, $r_0>0$, and by assumptions $u$ is a smooth non-negative function in $\Omega\cap C(X_0,t_0,2r_0)$ subject to the stated restrictions. We need to prove that there exist a constant $c=c(n,b_1)\in (1,\infty)$ such that,
\begin{align}\label{conc1rep}
&\mbox{$\Sigma\cap C(X_0,t_0,r_0/c)$ is locally parabolic uniform rectifiable}\notag\\
&\mbox{with the stated control on the constants}.
\end{align}
It then follows, see Remark \ref{reggraph}, that $\Sigma\cap C(X_0,t_0,r_0/c)$ is given as a part of the graph of a (unbounded) {regular} Lip(1,1/2)  function,  with constant $b_2=b_2(n,p,b_1,\gamma)$. To prove \eqref{conc1rep} we need to prove that if $(X,t)\in\partial\Omega$ and $R>0$ satisfy $C(X,t,R)\subset C(X_0,t_0,r_0/c)$, then
\begin{eqnarray*}\nu ( \Delta(X,t,R)\times ( 0, R) ) \, \lesssim R^{ n + 1 },
\end{eqnarray*} where the implicit constant is independent of
$( X,t)$ and $R$, and only depend on $n,p,b_1$, and $\gamma$. Recall that
$$\d \nu ( Z, \tau,
r  )  \, =  \, \ga^2( Z, \tau, r) \, \d \si ( Z, \tau) \, r^{ - 1 }
\d r,$$
and
$$ \ga ( Z, \tau, r  ) = \inf_{P \in \mathcal{P}}  \biggl ( \, \bariint_{  \Delta ( Z, \tau,r) }  \, \biggl (\frac {\dist ( Y,s, P )}{r}\biggr )^2  \d \sigma (Y, s )\biggr )^{1/2}.$$
In this last expression the infimum is taken
over all  $ n $ dimensional planes $ P $ containing a line
parallel to the $ t $ axis.   In the following we consider points
$ ( \tilde X, \tilde t ) \in \Delta(X_0,t_0,r_0/c)$
and $ r<r_0$. We introduce $\mathcal{S}=\{A_r(\tilde X,\tilde t)+(0,0,\rho);\ \ \rho\in (-r^2,r^2)\}$, i.e., $\mathcal{S}$ is the line in the $t$-direction connecting
the points $A_r^+(\tilde X,\tilde t)$ and $A_r^-(\tilde X,\tilde t)$. By construction this line
is contained in $\Omega$ and for all points $(\tilde Z,\tilde\tau)\in\mathcal{S}$, $\delta(\tilde Z,\tilde\tau)\sim r$.
Let $(\hat X,\tilde t)=A_r(\tilde X,\tilde t)$ and let
$I$ denote the interval $(\tilde t-r^2,\tilde t+r^2)$.  Let $\eta$ be a small positive number. Then, by the mean value theorem
\begin{eqnarray*}
   u(\hat X,\tilde t)-u(x,x_n+\eta r,\tilde t)=
   (4b_1-\eta)r\langle\nabla_X u(x,x_n+\beta r,\tilde t),e_n\rangle,
   \end{eqnarray*}
   for some $\beta\in (\eta,4b_1)$. In particular,
   \begin{eqnarray*}
   (4\delta b_1-\gamma\eta)\leq (4b_1-\eta)\langle\nabla_X u(x,x_n+\beta r,\tilde t),e_n\rangle,
   \end{eqnarray*}
   and we can conclude, by choosing $\eta$ small, that there is a point $(X^\ast,\tilde t)\in\Omega$, $X^\ast:=(x,x_n+\beta r)$, $\beta\in (\eta,4b_1)$, such that
   \begin{eqnarray}\label{impo}
   \tilde c^{-1}\leq \langle\nabla_X u(X^\ast,\tilde t),e_n\rangle,
   \end{eqnarray}
   for some $\tilde c=\tilde c(\delta, b_1,\gamma)\geq 1$.

   Considering $ ( Y, s )\in \Omega$ with $s\in I$ and using
Taylor's formula we get
 \begin{eqnarray*}
  u ( Y, s ) =  u ( X^\ast,s ) +   \lan \nabla_X u ( X^\ast, s ), Y - X^\ast \ran
 +
\int_{X^\ast }^{Y}   \,  \lan \nabla_X^2 u ( Z, s  ), Z - Y \ran   \d l. \end{eqnarray*}
Here the last integral is interpreted as the second directional
derivative of $ u $ taken along the line segment $ l $ from $ ( X^\ast,
s)  $ to $ ( Y, s)  $ (with $ Z $ on $l$). We introduce
\begin{eqnarray*} A (X^\ast, Y, s )  :=
 u ( X^\ast, s ) +   \lan \nabla_X u ( X^\ast, s ), Y -  X^\ast \ran
 \mbox{ when }  Y \in \mathbb R^{n},\end{eqnarray*}
 and we let $P$ denote the hyperplane defined through
 \begin{align}
 P:=\{ ( Z, \tau ) : A ( X^\ast, Z, \tilde t ) = 0 \}= \{ ( Z, \tau ) : \lan \nabla_X u (  X^\ast, \tilde t ),  X^\ast -Z\ran= u (  X^\ast, \tilde t )\}.
 \end{align}
 We claim that
 \begin{align}\label{appr}
 \mbox{$|A(X^\ast, Y, \tilde t )|\sim\dist(Y,s,P)$
 for all $(Y,s)\in\Delta(\tilde X,\tilde t,r)$}.
 \end{align}
 Indeed, assume that $(Y,s)\in\Delta(\tilde X,\tilde t,r)$ and $( Z, \tau )\in P$. Then
 \begin{align}\label{appr+}
 |A (X^\ast, Y, \tilde t )|&=  |A (X^\ast, Y, \tilde t )- A (X^\ast, Z, \tilde t )|= |\lan \nabla_X u ( X^\ast, \tilde t), Y - Z \ran|\sim |Y-Z|,
 \end{align}
 by \eqref{impo} and the fact that $|\nabla_X u ( X^\ast, \tilde t)|\lesssim 1$.

  By definition
\begin{eqnarray*}
\nu ( \De ( X, t, R ) \times ( 0,
R ) )
 \leq
  {\ds \int_0^R \iint_{\De ( X, t, R)} \xi ( \tilde X, \tilde t, r )
 \d \si ( \tilde X , \tilde t)  \,  \d r },
\end{eqnarray*}
where
\begin{eqnarray*}
\xi ( \tilde X,\tilde t, r ) \, = \, r^{ - n - 4 } \,
{ \ds \iint_{ \De ( \tilde X,\tilde t, r) }  (\dist(Y,s,P))^2 \d \si ( Y, s ) }.
\end{eqnarray*}
Using the deduction above,
\begin{align*}
\xi ( \tilde X,\tilde t, r ) \, &= \, r^{ - n - 4 } \,
{ \ds \iint_{ \De ( \tilde X,\tilde t, r) }  (\dist(Y,s,P))^2 \d \si ( Y, s ) }\\
&\sim  r^{ - n - 4 }
{ \ds \iint_{ \De ( \tilde X,\tilde t, r) }  |A(X^\ast, Y, \tilde t )|^2  \d \si ( Y, s ) }\\
&\leq2 r^{ - n - 4 }
{ \ds \iint_{ \De ( \tilde X,\tilde t, r) }  |A(X^\ast, Y, s )|^2  \d \si ( Y, s ) }\\
&+2 r^{ - n - 4 }
{ \ds \iint_{ \De ( \tilde X,\tilde t, r) }  |A(X^\ast, Y, s )-A(X^\ast, Y, \tilde t )|^2  \d \si ( Y, s ) }.
\end{align*}
Hence we have to estimate $|A(X^\ast, Y, s )|^2 $ and $|A(X^\ast, Y, s )-A(X^\ast, Y, \tilde t )|^2$ for all $(Y,s)\in \De ( \tilde X,\tilde t, r)$.

 Next, given
$(Y,s)\in\Delta(\tilde X,\tilde t,r)$ we note that due to \eqref{impo}, the line
emanating at $(Y,s)$ and extending in the direction of $e_n$ will hit the plane $P$ at one unique point.
 Using the fact that $ u  = 0 $ on $  \partial \Omega $, Taylor's formula as above and Schwarz's inequality
 we get for all $s\in I$,
\begin{eqnarray}\label{appr++}
  | A ( X^\ast, Y,  s ) |^2
 \, \lesssim r \,  \int_{X^\ast }^{Y}   \, \de ^2( Z, s  ) |\nabla_X^2 u ( Z, s  )|^2 \, \d l.  \end{eqnarray}
Hence,
\begin{align*}
2 r^{ - n - 4 }
{ \ds \iint_{ \De ( \tilde X,\tilde t, r) }  |A(X^\ast, Y, s )|^2  \d \si ( Y, s ) }\lesssim r^{ - n - 3 }\iiint_{ \Omega\cap C( \tilde X,\tilde t, 100b_1r) }
|\nabla_X^2 u ( Z, s  )|^2\de^2( Z, s  )\, \d Z\d s.
\end{align*}
Also,
\begin{align}
  A ( X^\ast, Y, s ) - A ( X^\ast, Y, \tilde t) &= u ( X^\ast, s )-u ( X^\ast, \tilde t ) \notag\\
  &+   \lan (\nabla_X u ( X^\ast, s )-\nabla_X u ( X^\ast, \tilde t )), Y -  X^\ast \ran\notag\\
  &= \bigl (\partial_t u ( X^\ast, t^\ast )+\lan \nabla_X \partial_t u ( X^\ast, t^\ast ), Y -  X^\ast \ran\bigr )(s-\tilde t),
  \end{align}
  for some $t^\ast\in (\tilde t,s)$. Using interior
estimates, which again follows from our assumptions and \cite{Lieb},  we get, for $ ( Y, s )\in \De ( \tilde X,\tilde t, r) $,
\begin{align*}
  &| A ( X^\ast, Y, s ) - A ( X^\ast, Y, \tilde t) |^2\\
  &\lesssim    r^{  - n}  \iiint_{ \Omega\cap C( X^\ast, \tilde t,2r) } \,
 \bigl ( |\partial_tu(Z,s)|^2\de^2( Z, s )+|\nabla_X \partial_tu ( Z, s  )|^2\de^4( Z, s  )\bigr )\, \d Z\d s.\end{align*}
 Hence,
 \begin{align*}
&2 r^{ - n - 4 }
{ \ds \iint_{ \De ( \tilde X,\tilde t, r) }  |A(X^\ast, Y, s )-A(X^\ast, Y, \tilde t )|^2  \d \si ( Y, s ) }\\
& \lesssim r^{ - n - 3 }\iiint_{ \Omega\cap C( \tilde X,\tilde t, 100b_1r) }
|\partial_tu ( Z, s  )|^2\de^2( Z, s  )\, \d Z\d s\notag\\
&+r^{ - n - 3 }\iiint_{ \Omega\cap C( \tilde X,\tilde t, 100b_1r) }
|\nabla_X\partial_t u ( Z, s  )|^2\de^4( Z, s  )\, \d Z\d s.
\end{align*}
Using this, and continuing our previous deductions, we find
\begin{align*}
  \xi ( \tilde X, \tilde t, r )
  &\lesssim  r^{ - n - 3 } \iiint_{ \Omega\cap C( \tilde X,\tilde t, 100b_1r) }\bigl ( |\partial_tu ( Z, s  )|^2\de^2( Z, s  )+
|\nabla_X\partial_t u ( Z, s  )|^2\de^4( Z, s  )\bigr )\, \d Z\d s\\
&+  r^{ - n - 3 } \iiint_{ \Omega\cap C( \tilde X,\tilde t, 100b_1r) }|\nabla_X^2 u ( Z, s  )|^2\de^2( Z, s  )\, \d Z\d s.
\end{align*}
Integrating  this  with respect to  $ ( \tilde X, \tilde t ) \in
\Omega\cap C(X,t,R) $ and $ r \in ( 0, R ) $ we obtain, after
interchanging the order of integration,
\begin{align*}
 \nu ( \De (X,t, R ) \times ( 0,
R ) )&\leq
  {\ds \int_0^R \iint_{\De ( X,t,R)} \xi ( \tilde X, \tilde t, r )
 \d \si ( \tilde X , \tilde t)  \,  \d r }\\
 &\lesssim   \iiint_{ \Omega\cap C( X,t, 200 b_1R) }\bigl ( |\partial_tu ( Z, s  )|^2\de( Z, s  )+
|\nabla_X\partial_t u ( Z, s  )|^2\de^3( Z, s  )\bigr )\, \d Z\d s\\
&+  \iiint_{ \Omega\cap C( X,t, 200 b_1R) }|\nabla_X^2 u ( Z, s  )|^2\de( Z, s  )\, \d Z\d s.
 \end{align*}
 Using the assumptions on $u$ it follows that $\de( Z, s  )\lesssim u(Z,s)$ for all $(Z,s)\in \Omega\cap C( X,t, 200 b_1R)$.  Hence, combining Lemma \ref{thm1-lem}, Theorem \ref{thm1-a} and Corollary \ref{thm1-b},
 \begin{align*}
 \nu ( \De (X,t, R ) \times ( 0,
R ) )&\lesssim  R^{n+1},
 \end{align*}
   and this completes the proof.

\subsection{Proof of Theorem \ref{Free+}}  To prove the first statement of the theorem, \eqref{conc2}, note that for each $k\in \{1,...,n\}$, $u_{x_k}$ is a bounded solution to the linear equation
$\tilde{\mathcal{H}}u_{x_k}$ in $\Omega\cap C(X_0,t_0,r_0)$. Hence, by a Fatou type theorem, see for example Theorem 4.5 in \cite{N}, we deduce that
\begin{align}\label{conc2++}
\lim_{\substack{(Y,s)\in \Gamma(X,t)\\ (Y,s)\to (X,t)}} u_{x_k}( Y,s )
 \end{align}
 exists for $\omega$-a.e. $(X,t)\in\Sigma\cap C(X_0,t_0,r_0)$, where $\omega$ is the parabolic measure associated to $\tilde{\mathcal{H}}$. However,
 using that $\delta\lesssim u$ close to the boundary, \ Lemma \ref{Ainfty}, Theorem \ref{thm1-a} and Corollary \ref{thm1-b}, we see that the same conclusion must hold for  $\sigma$-a.e. $(X,t)\in\Sigma\cap C(X_0,t_0,r_0)$. This proves \eqref{conc2}. Next,  let $u\equiv 0$ on $(\mathbb R^{n+1}\setminus\Omega)\cap C(X_0,t_0,r_0)$, and consider the functional
 $$T(\phi):= -\iiint |\nabla_Xu|^{p-2}\nabla_Xu\cdot\nabla_X\phi -u \partial_t\phi\, \d X \d t,$$
 for $ \phi \in C_0^\infty(C(X_0,t_0,r_0))$. Using our assumptions on $u$, it follows readily that $T$ is a non-negative distribution, and
 hence there exists a locally finite measure $\mu$ supported on $\Sigma\cap C(X_0,t_0,r_0)$ such that
 $$T(\phi)=\iint \phi \d \mu,$$
 for $ \phi \in C_0^\infty(C(X_0,t_0,r_0))$. This proves \eqref{conc3}. Furthermore, given $\varepsilon>0$ small it follows from \eqref{boundsfree} and \eqref{boundsfreere} that
 $$\{(X,t): u(X,t)=\varepsilon\}\cap\Omega\cap C(X_0,t_0,r_0)$$
  is a smooth hypersurface and in particular that
 $$\{(X,t): u(X,t)=\varepsilon\}\cap\Omega\cap C(X_0,t_0,r_0)=\{(x,x_n,t):\ x_n=\psi_{\varepsilon}(x,t)\}\cap C(X_0,t_0,r_0),$$
 for a  Lip(1,1/2) function $\psi_\varepsilon$ with constant $b_1$ independent of $\varepsilon$. As a consequence
 $$\iiint_{\Omega_\varepsilon} |\nabla_Xu|^{p-2}\nabla_Xu\cdot\nabla_X\phi -u \partial_t\phi\, \d X \d t=-\iint_{\partial\Omega_\varepsilon} \phi \d \mu_\varepsilon, $$
 $\Omega_\varepsilon:=\{(x,x_n,t):\ x_n>\psi_{\varepsilon}(x,t)\}$, for all $ \phi \in C_0^\infty(C(X_0,t_0,r_0))$, and $ \d \mu_\varepsilon=|\nabla_Xu(X,t)|^{p-1}\d\sigma_\varepsilon(X,t)$ where
 $\sigma_\varepsilon$ is the surface measure on $\{(x,x_n,t):\ x_n=\psi_{\varepsilon}(x,t)\}$. Using \eqref{conc2} and dominated convergence we deduce that
 $$ \d \mu=|\nabla_Xu(X,t)|^{p-1}\d\sigma(X,t)\mbox{ on }\partial\Omega\cap C(X_0,t_0,r_0).$$
 This proves the second part of \eqref{conc4}.  It remains to prove the first part of \eqref{conc4}, i.e., that
 \begin{align*}
\nabla_Xu(X,t)=|\nabla_Xu(X,t)|\mathbf{n}(X,t),
\end{align*}
for  $\sigma$-a.e. $(X,t)\in\Sigma\cap C(X_0,t_0,r_0)$, and where $\mathbf{n}(X,t)$ was introduced in Definition \ref{ddeffa}. Consider $\Delta:=\Delta(X_0,t_0,r_0)$, $(X_0,t_0)\in\partial\Omega$. Let  $ E $ be the set of all $ (Z,\tau) \in \Delta $ which satisfies the following.
  \begin{align}\label{3.14}
    (a)& \hs{.2in}\mbox{$(Z,\tau) $  is a point of  density for $ E $ relative to  $ \si$. }\notag \\
   (b)& \hs{.2in} \mbox{There is a time-independent tangent plane $T(Z,\tau)$ to $\partial \Omega $ at  $(Z,\tau)$}\notag\\
   & \hs{.2in} \mbox{with unit normal $\mathbf{n}(Z,\tau)$.} \notag \\
   (c)& \hs{.2in} \mbox{$\lim_{\rho \rar 0} \rho^{-(n+1)} \si ( \Delta\cap C( Z,\tau, \rho) )= 2\hat a$}.\notag\\
  (d)& \hs{.2in}  \mbox{$\lim_{\rho \rar 0} \rho^{-(n+1)} \mu ( \Delta\cap C( Z,\tau, \rho) )= 2\hat a|\nabla_Xu(Z,\tau)|^{p-1}$.}
  \end{align}
  In  \eqref{3.14} $(c)$, $ \hat a, $  denotes the Lebesgue $(n-1)$-measure of the unit ball in $ \mathbb R^{n-1}. $  Now $ \si (\Delta \sem E ) = 0$.
    Indeed   $(a)$ of \eqref{3.14}  for $ \si$ almost every $(Z,\tau)$ is a consequence of the fact that  $ \si$ is a regular Borel measures and differentiation theory. To   prove  $(b)$ of \eqref{3.14} we  need to prove, for $ \si$ almost every $ (Z,\tau)\in\De $, that there exists a time-independent hyperplane
    $ T (Z,\tau)$, with unit normal $\mathbf{n}(Z,\tau)$, such that
            \begin{align} \label{3.16}  \lim_{\rho \rar 0} \frac{ H ( T ( Z,\tau ) \cap C ( Z,\tau, \rho ), \Delta(Z,\tau,\rho))}{ \rho} =  0,
            \end{align}
      where $H$  denotes parabolic Hausdorff distance. However, this follows from the Rademacher theorem stated in Theorem \ref{rademacher}, see \eqref{3.16tang} in Remark \ref{remtang}. $(c)$ of \eqref{3.14} is a consequence of the same argument, and $(d)$ was proved above.

    We now use a blow-up argument to complete the proof of  \eqref{conc4}. Let $ E$ and $\Delta$ be as in  \eqref{3.14} and
     consider $ (Z,\tau) \in E. $ Using invariance of the $p$-parabolic equation under spatial rotations, and invariance under translations in $(X,t)$, we may assume
    $ (Z,\tau) = 0 $,  $T ( 0,0 ) = \{ (X,t) \in \mathbb R^n :  x_n = 0\}$,  where $ T ( 0,0 ) $ is the
    time-independent tangent plane in  \eqref{3.14}, and that $\mathbf{n}(0,0)=e_n$.  We let $ H :=  \{ (X,t) \in \mathbb R^n :  x_n > 0\} $, $ \partial H= T ( 0,0 )$.  Let $ \{\rho_m\} $ be a decreasing sequence of positive numbers
    with limit zero and $\rho_1\ll r$.  Let
    \begin{align}\label{3.17}
    \Om_m  = \{ (X,t): (\rho_mX,\rho_m^pt) \in \Om \cap C ( 0,0, r  ) \},
    \end{align}
    and let
    \begin{align}
v_m ( X,t ) =  \rho_m^{-1}  \, u(\rho_mX,\rho_m^pt), \, (X,t)  \in  C ( 0,0, r  ).
 \end{align}
Fix  $ R >> 1. $ Then  for  $ m $ sufficiently large, say $ m \geq m_0, m_0 = m_0 (R), $  we note that
  $ v_m $ is a  $p$-parabolic function  in  $ \Om_m \cap C ( 0,0, R )$, continuous in $ C ( 0,0, R )$,  and $ v_m \equiv 0 $ on $ C ( 0,0, R ) \setminus \Om_m.$ Define
  \beq \label{3.18}  \nu_m ( G  ) =  \rho_m^{1-n}   \mu ( \rho_m G ),  \mbox{ whenever $ G $ is a Borel subset of $ C ( 0,0, R ) .$} \eeq Then $ \nu_m $ is the measure corresponding to  $ v_m $ as in \eqref{conc3} for
  $ m \geq m_0. $  Furthermore,
  \beq \label{3.19}   |\nabla_X v_m | \leq c  \mbox{ on } \Om_m\cap C(0,0,R), \eeq
  and
\beq \label{3.20}   | v_m ( X,t ) | \leq c\dist(X,t,\partial\Omega_m), \, \, (X,t) \in \Om_m \cap C(0,0,R).\eeq
 \eqref{3.16} implies that
     \beq \label{3.21}   H (  \Om_m \cap C ( 0,0, R ),    H \cap  C ( 0,0, R ) )  \rar 0  \mbox{ as }
      m \rar \infty.
    \eeq  From   \eqref{3.19}-\eqref{3.21}
    we see that a subsequence of $ \{v_m\}, $   denoted $ \{v'_m\} $ converges uniformly on
     compact subsets of  $ \mathbb R^{n+1} $ to a  H\"{o}lder continuous function $ v $ with $ v \equiv 0 $  in $ \mathbb R^{n+1} \sem H. $  Also $ v \geq 0 $ is  a $p$-parabolic function in $ H. $  Next, using Schwarz reflection, Theorem \ref{osci}, \eqref{3.19}, and \eqref{3.20}, we see that for each $R\gg 1$, we have
     $$  \osc_{ Q_{\varrho}^{c\varrho}} \nabla_Xv \leq 4c \left(\frac{\varrho}{R} \right)^{\alpha},
$$
for all $\varrho \in (0,R)$ with $\alpha$ as in Theorem~\ref{osci}. In particular, letting $R\to\infty$, we can conclude that $\nabla_Xv$ is constant on $H$. Hence
$v=\beta x_n^+$ for some constant $\beta>0$, $x_n^+=\max\{x_n,0\}$. Next, let
$\{\nu'_m\}$, be measures corresponding to $ \{v'_m\}$  in the sense that
$$\iiint |\nabla_Xv'_m|^{p-2}\nabla_Xv_m'\cdot\nabla_X\phi -v_m' \partial_t\phi\, \d X \d t=-\iint \phi \d \nu_m',$$
for all $\phi\in C_0^\infty(C(0,0,R))$ provided $ m \geq m_0, m_0 = m_0 (R)$. Then  the  measures, $\{\nu'_m\}$ have  uniformly bounded total masses  on $ C ( 0, 0, R ).$ Using this and \eqref{3.19}, \eqref{3.20}, we obtain  that    $ \nu'_m $ converges weakly to  $ \nu $ where $ \nu $ is the measure associated with
     $ \al \, x_n^+ . $  It follows
        $ \nu  = \al^{p-1}  \si_H $ where $ \si_H, $ denotes the $(n + 1)$-dimensional parabolic Hausdorff measure on $ H. $ Using this computation, weak convergence, \eqref{3.18}, and  \eqref{3.14} $(d)$, we get
     \[  2\al^{p-1} \, \hat a \, R^{n+1}  =   \lim_{m \rar \infty}
       \nu'_m ( C (0,0,R)) =  \lim_{m \rar \infty} s_m^{-1-n} \mu ( C ( 0, 0, R s_m ) ) =
         2\hat a \, R^{n+1} |\nabla_Xu|^{p-1} ( 0,0 ). \]
         Hence,
         \begin{align}\label{3.22} \al= |\nabla_Xu|( 0,0 ).
         \end{align}
         From \eqref{3.22} and our earlier observations we see that  $ (X,t) \rightarrow \rho^{-1} v ( \rho X,\rho^pt ) $  converges uniformly as $ \rho \rightarrow 0 $ to $ \al x_n^+$  on  compact subsets of $ \mathbb R^{n+1} $  and $ (X,t) \rightarrow  \nabla_Xv( \rho X,\rho^pt )  $ converges uniformly to $ \al  e_n $  as $ t \rightarrow 0 $, when $ (X,t) $  lies in a compact subset of  $ H. $  Put together these observations prove  \eqref{conc4}.\\

\end{document}